\title{On compact K\"ahler manifold with strongly pseudo-effective tangent bundle}
\author{Xiaojun WU}
\date{\today}
\newtheorem{mythm}{Theorem}
\newtheorem{mylem}{Lemma}
\newtheorem{myprop}{Proposition}
\newtheorem{myex}{Example}
\newtheorem{mycor}{Corollary}
\newtheorem{mydef}{Definition}
\newtheorem{myrem}{Remark}
\begin{document}
\maketitle
  \def\tors{\mathrm{Tors}}
\def\cI{\mathcal{I}}
\def\Z{\mathbb{Z}}
\def\Q{\mathbb{Q}}  \def\C{\mathbb{C}}
 \def\R{\mathbb{R}}
 \def\N{\mathbb{N}}
 \def\H{\mathbb{H}}
  \def\P{\mathbb{P}}
 \def\rC{\mathcal{C}}
  \def\nd{\mathrm{nd}}
  \def\d{\partial}
 \def\dbar{{\overline{\partial}}}
\def\dzbar{{\overline{dz}}}
 \def\ii{\mathrm{i}}
  \def\d{\partial}
 \def\dbar{{\overline{\partial}}}
\def\dzbar{{\overline{dz}}}
\def \ddbar {\partial \overline{\partial}}
\def\cN{\mathcal{N}}
\def\cE{\mathcal{E}}  \def\cO{\mathcal{O}}
\def\cF{\mathcal{F}}
\def\cS{\mathcal{S}}
\def\cQ{\mathcal{Q}}
\def\P{\mathbb{P}}
\def\cI{\mathcal{I}}
\def \loc{\mathrm{loc}}
\def \cC{\mathcal{C}}
\bibliographystyle{plain}
\def \dim{\mathrm{dim}}
\def \Sing{\mathrm{Sing}}
\def \Id{\mathrm{Id}}
\def \rank{\mathrm{rank}}
\def \tr{\mathrm{tr}}
\def \Ric{\mathrm{Ric}}
\def \Vol{\mathrm{Vol}}
\def \RHS{\mathrm{RHS}}
\def \liminf{\mathrm{liminf}}
\def \ker{\mathrm{Ker}}
\def \nn{\mathrm{nn}}
\def \Cut{\mathrm{Cut}}
\def \Pic{\mathrm{Pic}}
\def \Alb{\mathrm{Alb}}
\def \diam{\mathrm{diam}}
\def \Aut{\mathrm{Aut}}
\def \Sym{\mathrm{Sym}}
\def \reg{\mathrm{reg}}
\def\stateparagraph{\vskip7pt plus 2pt minus 1pt\noindent}
\begin{abstract}
In the first part of this note, we discuss the compact K\"ahler manifold with a strongly pseudo-effective tangent bundle.
In the second part, we give new proof of the fact that the only projective manifolds with the big tangent bundle are the projective spaces.
In the third part, we give a characterisation of nef vector bundle.
\end{abstract}
\section{Introduction}
The compact Riemann surfaces that are not of general type are those of semi-positive curvature. In higher dimensions, the situation is much more subtle, and it has been studied in various possible generalisations (e.g., when the manifold is projective or compact K\"ahler, or with the positivity assumption on the whole tangent bundle or only on the anticanonical line bundle).
This kind of problem has been studied by many authors and in many works, cf. \cite{Mori79},\cite{SY80},  \cite{Mok88}, \cite{CP91}, \cite{DPS93}, \cite{DPS94},  \cite{CH17}, \cite{CH19},\cite{Cao19}, \cite{CCM}   among others.

This note is a continuation of the work of \cite{Wu20}.
The general setting of this paper is the following.
Let $(X, \omega)$ be a compact K\"ahler manifold, and we study the
manifolds with strongly psef tangent bundle and properties of strongly psef vector bundle.
The definition of strongly psef vector bundle over $X$ is introduced in \cite{BDPP}.
\stateparagraph
{\bf Definition A.} {\it
Let $(X, \omega)$ be a compact K\"ahler manifold and $E$ a holomorphic vector bundle on $X$. 
Then $E$ is said to be strongly pseudo-effective (strongly psef for short) if the line bundle $\cO_{\P(E)}(1)$ is pseudo-effective on the projectivized bundle $\P(E)$ of hyperplanes of $E$, i.e.\ if for every $\varepsilon>0$ there exists a singular metric $h_\varepsilon$ with analytic singularities on $\cO_{\P(E)}(1)$ and a curvature current $i \Theta(h_\varepsilon) \geq - \varepsilon \pi^* \omega$, and if the projection $\pi(\Sing(h_\varepsilon))$ of the singular set of~$h_\varepsilon$ is not equal to~$X$.
}

In the definition of the strongly pseudo-effective vector bundle, an additional condition is made on the approximate singular metrics on the tautological line bundle.
One may ask whether the additional condition could be made directly on some positive singular metric on the tautological line bundle.
For example, let $h$ be a positive singular metric on $\cO_{\P(E)}(1)$, could we consider the condition that the projection of singular set $\{h = \infty\}$ in $X$ is a (complete) pluripolar set?
However, this kind of condition does not behave well functorially as shown in Example 5 suggested to the author by Demailly.

This note separates into three independent parts.

The first part concerns the fundamental group of a compact K\"ahler manifold with a strongly psef tangent bundle.
In fact, we show the following result.

\stateparagraph
{\bf Theorem A.} {\it 
Assume that the fundamental group of any compact K\"ahler manifold with a strongly psef tangent bundle has subexponential growth.
Up to a finite \'etale cover, the Albanese map of a compact K\"ahler manifold with a strongly psef tangent bundle is a submersion with rational connected very general fibre.
Moreover, the fundamental group of a compact K\"ahler manifold with a strongly psef tangent bundle is almost abelian.
In particular, the structural result on the Albanese map and the fundamental group holds for a compact K\"ahler manifold $X$ such that $T_X$ is strongly psef and $-K_X$ is nef.
}

In other words, if one can show that the fundamental group of compact K\"ahler manifold with strongly psef tangent bundle is always not so ``big,"
then we have the structure theorem, which implies easily that the fundamental group is in fact almost abelian.
In fact, since a rational connected K\"ahler manifold has a trivial fundamental group, the homotopy exact sequence shows that the fundamental group of this finite cover is abelian.
This part can be seen as the generalisation of the Main theorem of \cite{DPS94} from the nef case to the strongly psef case under the assumption of the fundamental group.
This kind of problem is raised for example in \cite{Mat18}, \cite{Mat21}, \cite{MW21}.

Notice that for the special case when the anticanonical bundle is in addition nef, the result is known by the work of P\u{a}un \cite{Pau98a} using profound results of Cheeger-Colding \cite{CC96}, \cite{CC97}, \cite{KW11}.

When the manifold is projective, it is a direct consequence of the work of \cite{HIM}.
An essential tool in their proof is the results of \cite{BDPP} and \cite{GHS}, which state that the canonical bundle of the MRC quotient is pseudo-effective.
(The projectivity assumption is also needed to apply the result of \cite{Hor07}.)
It is conjectured that the results of \cite{BDPP} are still valid in the compact K\"ahler setting.
However, it seems to be far of reach at the moment.

Our basic observation to treat the compact K\"ahler setting is that assuming the geometric situation remaining unchanged from the projective case to the compact K\"ahler case, the very general fibre of the Albanese morphism of some finite \'etale cover is a rational connected compact K\"ahler manifold.
However, by the fundamental work of \cite{Cam}, such a very general fibre is necessarily projective.
Thus we can apply the result of \cite{HIM} to this very general fibre.
It turns out that the rational connectedness of this very general fibre is equivalent to the projectivity, which is much easier to prove assuming that the fundamental group is not ``too big".

More precisely, we are reduced to the following situation.
Let $X$ be a compact K\"ahler manifold with a strongly psef tangent bundle.
Let $\tilde{X}$ be a finite \'etale cover of $X$ of maximum irregularity $q=q(\tilde{X}) =h^1(\tilde{X},\cO_{\tilde{X}})$.
Assume $q=0$.
For simplicity, assume that $X$ attains the maximum irregularity.
Then we need to prove that $X$ is projective.
It is shown by the non-existence of non-trivial holomorphic two-form on $X$. By absurd,
a non-trivial holomorphic two-form will induce a numerically flat vector bundle over $X$.
Now we can follow the strategy of \cite{DPS94} to deduce a contradiction.
By the work of \cite{DPS94} (also cf. \cite{Deng}), a numerical trivial vector bundle is an extension of Hermitian flat vector bundles.
In particular, we have a representation of $\pi_1(X)$ into some unitary group.
By Tits alternating theorem, the image of the representation either contains a non-abelian free group or is an almost nilpotent group.
We can show that the first Betti number of some finite \'etale cover is strictly positive for the second case.
It contradicts the fact that $q=0$.
It remains to show that the first case can not happen.
In the case of nef case, which is proven in \cite{DPS93},
the authors prove that the fundamental group of a compact K\"ahler manifold with nef anticanonical line bundle has subexponential growth.
Thus it cannot have a non-abelian free subgroup.
For the moment, we do not know how to generalise their volume estimate in our case since we should not hope for the existence of a smooth metric with an arbitrarily small Ricci curvature lower bound. 
Thus we state the fundamental group estimate as a conjecture and as an assumption in Theorem A.
\stateparagraph
{\bf Conjecture A.} {\it 
The fundamental group of a compact K\"ahler manifold with a strongly psef tangent bundle has subexponential growth.
}

Recall the definition of subexpotenial growth of a group (cf. \cite{DPS93}).
If $G$ is a finitely generated group with generators $g_1,\cdots,g_p$, we denote by $N(k)$ the number of elements $\gamma \in G$ which can be written as words
$$\gamma=g_{i_1}^{\varepsilon_1} \cdots g^{\varepsilon_k}_{i_k},  \varepsilon_j \in \{ 0,1,-1\}$$
of length $\leq k$ in terms of the generators.  The group $G$ is said to have subexponential growth if for every $\varepsilon >0$ there is a constant $C(\varepsilon)$ such that $N(k) \leq C(\varepsilon)e^{\varepsilon k}$ for $k\geq 0$.

This notion is independent of the choice of generators.
It follows easily that such a fundamental group does not have a non-abelian free group.
Apply this conjecture to avoid the non-abelian free group in the image of $\pi_1(X)$ (in the representation).

Notice that the conjecture holds if the manifold is projective by the work of \cite{HIM}.
Notice that this conjecture is also optimal because there is a projective manifold with a psef anticanonical line bundle whose fundamental group contains a non-abelian free group.
An example is discussed in Section 2.

Inspired by the case when the tangent bundle of a compact K\"ahler manifold is nef, we have the following conjecture.
\stateparagraph
{\bf Conjecture B.} {\it
Let $X$ be a compact K\"ahler manifold with a strongly psef tangent bundle.
Let $\tilde{X}$ be a finite étale cover of $X$ such that $\tilde{q}(X)=q(\tilde{X})$.
Then the Albanese map is a local isotriviality onto the Albanese torus with rationally connected fibres.

}
Compared to the nef case, the main difficulty is that the general fibre is not necessarily Fano. 
In particular, a priori, a high power of the anticanonical line bundle does not necessarily embed the fibres in a fixed projective space which is a standard strategy to prove the isotriviality.

Based on the work of Brion and Fujiki, we verify the conjecture for compact quasi-homogeneous manifolds of class $\cC$ (i.e. bimeromorphic to compact K\"ahler manifolds).
\stateparagraph
{\bf Proposition A.}(Remark 5) {\it
Let $X$ be a compact quasi-homogeneous manifold of class $\cC$ (i.e. having an open and dense orbit under the natural $\Aut_0(X)-$action).
Then the Albanese map of $X$ is locally isotrivial whose fibre $F$ has a big anti-canonical divisor.
Moreover, the universal cover of $X$ is isomorphic to $\C^{\tilde{q}(X)} \times F$.
}

The example of the Hopf surface shows that the proposition does not hold for arbitrary compact homogeneous manifolds.

However, when the tangent bundle is strongly big (cf. the definition below), the Albanese torus reduces to a point from which we infer that the manifold is rationally connected.
In fact, the only possibility is the projective space as shown by \cite{Iwe}, \cite{FM}.
In Section 3, we give new proof of this fact using Serge currents.
\stateparagraph
{\bf Theorem B.} {\it
Let $X$ be a compact K\"ahler manifold of dimension $n$ with a strongly big tangent bundle.
Then $X$ is biholomorphic to $\P^n$.
}

Here the notion of strong bigness is defined as follows.
\stateparagraph
{\bf Definition B.} {\it
Let $E$ be a vector bundle over a compact K\"ahler manifold $X$. 
$E$ is said to be strongly big if there exists a K\"ahler current $T$ in the class of $c_1(\cO_{\P(E)}(1))$ such that the projection of the singular part $E_{+}(T)=\{\nu(T,x) >0\}$ does not cover $X$.
Here $\nu(T,x)$ is the Lelong number of $T$ at $x \in \P(E)$.
}

According to the knowledge of the author, this is first defined in \cite{Iwe} in a different form.
In Section 3, we will prove that the two definitions are equivalent.


The third part of this work is a generalisation of the characterisation of the nef line bundle of P$\check{a}$un to the higher rank case (cf. \cite{Pau98b}).
\stateparagraph
{\bf Theorem C.} {\it
Let $E$ be a vector bundle over a compact K\"ahler manifold $(X, \omega)$ of dimension $n$.
$E$ is nef if and only if for any irreducible analytic subset $Z \subset X$, $E|_Z$ is a strongly psef vector bundle.
}

In particular, a vector bundle over a compact K\"ahler space which does not contain any proper analytic subset other than points (for example, a very general torus) is strongly psef if and only if it is nef.

Here, a vector bundle over an irreducible compact K\"ahler complex space $Z$ is called strongly psef if a modification of $Z$ is smooth such that the pullback of the vector bundle is strongly psef.
Similarly, a torsion-free sheaf is called psef if there exists a modification of $Z$ which is smooth such that the pullback of the coherent sheaf modulo torsion is a strongly psef vector bundle.

This result is optimal in the sense that we can not change the strong psefness condition in the equivalent condition by the weak psefness condition.

At the end of the note, we generalise the Main Theorem of \cite{Wu20} to the compact manifolds of the class $\cC$.
\stateparagraph
{\bf Proposition B.}{\it
Let $X$ be a compact manifold of the Fujiki class $\cC$.
Let $E$ be a holomorphic vector bundle over $X$.
Then the following conditions are equivalent:
\begin{enumerate}
\item $E$ is nef and $c_1(E)=0$;
\item $E$ is strongly psef and $c_1(E)=0$;
\item $E$ admits a filtration by vector bundles whose graded pieces are hermitian flat, i.e. given by unitary representations of $\pi_1(X)$.
\end{enumerate}
}
As a geometric consequence, we have the following corollary.
\stateparagraph
{\bf Corollary A.}{\it
Let $X$ be a compact connected manifold of the Fujiki class $\cC$ such that $c_1(X)=0$.
Assume that there exists $q \neq n:=\mathrm{dim}_\C X$ such that $\wedge^q T_X$ is strongly psef.
Then up to a finite \'etale cover, $X$ is a complex torus.
In particular, an irreducible symplectic, or Calabi-Yau manifold of the Fujiki class $\cC$ does
not have strongly psef $\wedge^q T_X$ or $\wedge^q T^*_X$ for any $q \neq n:=\mathrm{dim}_\C X$.
}

In the Appendix, based on an example of Ueno \cite{Uen}, we discuss the surjectivity of Albanese morphism of a compact manifold with nef anticanonical line bundle.
\paragraph{}
\textbf{Acknowledgement} I thank Jean-Pierre Demailly, my PhD supervisor, for his guidance, patience, and generosity. 
I would like to thank Junyan Cao, Patrick Graf, Mihai P\u{a}un, and Thomas Peternell for some very useful suggestions on the previous draft of this work.
In particular, I warmly thank Michel Brion for helping to classify the quasi-homogeneous case.
I would also like to express my gratitude to my colleagues of Institut Fourier for all the interesting discussions we had. This work is supported by the European Research Council grant ALKAGE number 670846, managed by J.-P. Demailly.

\section{Fundamental group of manifolds with strongly psef tangent bundle}
In this section, we prove that the fundamental group of a compact K\"ahler manifold with a strongly psef tangent bundle is almost abelian under some assumptions.
The basic fact is the variant of projective case proven in \cite{HIM} based on \cite{BDPP} and \cite{GHS}.
\begin{myprop}
Let $X$ be a projective manifold such that $T_X$ is strongly psef.
Let $\tilde{X}$ be a finite \'etale cover of maximum irregularity $ q(\tilde{X} ) = \tilde{q}(X)$.
Then the Albanese morphism of $\tilde{X}$ is a smooth fibration such that the very general fibre $F$ has a strongly psef tangent bundle and is rational connected.
Moreover $\pi_1(\tilde{X}) \cong \Z^{2\tilde{q}(X)}$.
\end{myprop}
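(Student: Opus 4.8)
The plan is to identify the Albanese morphism of $\tilde X$ with its maximal rationally connected (MRC) fibration and then to extract $\pi_1$ from the homotopy sequence of the resulting fibre bundle. First I would replace the MRC fibration by a morphism $\psi\colon \hat X \to Z$ on a smooth model $\hat X \to \tilde X$. By Graber--Harris--Starr \cite{GHS} the base $Z$ is not uniruled, so by \cite{BDPP} its canonical class $K_Z$ is pseudo-effective. Since every holomorphic $p$-form vanishes on a rationally connected manifold, pullback identifies $H^0(\tilde X,\Omega^1_{\tilde X})$ with $H^0(Z,\Omega^1_Z)$; hence the Albanese map of $\tilde X$ factors as $\tilde X \to Z \to \Alb(Z)=\Alb(\tilde X)=:A$, and $\dim A=q(\tilde X)=\tilde q(X)$.

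Next I would transfer the positivity of $T_{\tilde X}$ (which is strongly psef, being the finite étale pullback of $T_X$) to the base. The differential of $\psi$ is a surjection $T_{\hat X}\twoheadrightarrow \psi^* T_Z$ over a dense open set, and in the hyperplane convention of Definition A a quotient of a strongly psef bundle is strongly psef away from the non-dominating singular locus; thus $T_Z$ is strongly psef and $-K_Z=\det T_Z$ is pseudo-effective. Together with $K_Z$ pseudo-effective and the salience of the pseudo-effective cone this forces $c_1(T_Z)=0$. By Proposition A the bundle $T_Z$ is then nef with $c_1=0$, i.e.\ numerically flat, and by \cite{DPS94} a projective manifold with numerically flat tangent bundle is a complex torus up to finite étale cover. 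Using the maximality hypothesis $q(\tilde X)=\tilde q(X)$ (which prevents the irregularity from rising under covers and excludes multiple fibres) I would identify $Z$ with the Albanese torus $A$ and $\psi$ with $\alpha\colon\tilde X\to A$. In particular the very general fibre $F$ is rationally connected; restricting $T_{\tilde X}$ to such an $F$ (which avoids the singular locus) and using the sequence $0\to T_F\to T_{\tilde X}|_F\to \cO_F^{\oplus \tilde q(X)}\to 0$ with trivial quotient, one checks that $T_F$ is strongly psef.

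The hard part is the \emph{smoothness} of $\alpha$, i.e.\ that $\alpha$ is everywhere a submersion rather than merely a fibration with possibly singular fibres. Equivalently, the differential $d\alpha\colon T_{\tilde X}\to \alpha^* T_A=\cO_{\tilde X}^{\oplus \tilde q(X)}$, dual to evaluation of the pulled-back holomorphic $1$-forms, must be surjective at every point, i.e.\ the $1$-forms coming from $A$ have no common zero. I expect this to be the main obstacle: it does not follow formally from the MRC picture and requires the finer positivity input, namely a direct-image and curvature argument for (strongly) pseudo-effective tangent bundles, in the spirit of the nef-anticanonical case of \cite{Cao19}, \cite{CH19} and exploiting that $-K_{\tilde X}$ is pseudo-effective together with the rational connectedness of the fibres.

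Granting smoothness, $\alpha$ is a proper holomorphic submersion, hence a $C^\infty$ fibre bundle by Ehresmann's theorem; all fibres are diffeomorphic to the very general one, which is a rationally connected projective manifold and therefore simply connected (Campana, Koll\'ar). The homotopy exact sequence $\pi_1(F)\to \pi_1(\tilde X)\to \pi_1(A)\to 1$ then collapses to an isomorphism $\pi_1(\tilde X)\cong \pi_1(A)\cong \Z^{2\tilde q(X)}$, which completes the proof.
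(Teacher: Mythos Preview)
Your overall strategy of identifying the Albanese map with the MRC quotient via \cite{GHS} and \cite{BDPP} is reasonable and close in spirit to what \cite{HIM} and the paper do, but there is a genuine gap and a misplaced difficulty.

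The gap is precisely the step you flag as ``the hard part'': smoothness of $\alpha$. You expect this to require a delicate direct-image/curvature argument \`a la \cite{Cao19}, \cite{CH19}, but in fact it is immediate from the strongly psef hypothesis via the following lemma (the paper's Lemma~3, the strongly psef analogue of \cite[Lemma 1.16]{DPS94}): \emph{if $E$ is strongly psef, any non-zero section of $E^*$ is nowhere vanishing}. Apply this with $E=T_{\tilde X}$: the holomorphic $1$-forms pulled back from $A$ form a basis of $H^0(\tilde X,\Omega^1_{\tilde X})$, and if some non-trivial linear combination vanished at a point it would vanish identically, contradicting linear independence. Hence $d\alpha$ is everywhere surjective and $\alpha$ is a submersion. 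No MRC input is needed for this; it is pure positivity.

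By contrast, the paper's route reverses your order of difficulty. It first gets smoothness of $\alpha$ for free as above, then uses the relative tangent sequence (with trivial quotient $\alpha^*T_A$) to see that the very general fibre $F$ has strongly psef $T_F$; since $\tilde q(F)=0$ by \cite[Prop.~3.12]{DPS94}, rational connectedness of $F$ is deduced from the structure theorem of \cite{HIM} (their Theorem~3.10, resting on \cite{Hor07}, \cite{GHS}, \cite{BDPP}), which produces on any projective manifold with strongly psef tangent bundle a morphism to a variety finitely covered by an abelian variety with rationally connected general fibre; when $\tilde q=0$ the base must be a point. The $\pi_1$ computation is then exactly your homotopy-sequence argument.

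A secondary technical point: when you pass to a smooth model $\hat X\to\tilde X$ to make the MRC fibration a morphism, $T_{\hat X}$ is \emph{not} known to be strongly psef (blow-ups typically destroy this), so you cannot use the quotient argument on $\hat X$ as written. One has to work with the generically defined sheaf morphism $T_{\tilde X}\to \psi^*T_Z$ on $\tilde X$ itself and invoke the appropriate ``generic surjection preserves strong psefness'' result (Proposition~5 of \cite{Wu20}); this is fixable but should be stated carefully.
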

\begin{proof}
The only new statement compared to Theorem 3.11 of \cite{HIM} is that under the projective condition, the very general fibre is rational connected and the fundamental group calculation.
The proof follows from the proof of Theorem 3.14 and 3.15 in \cite{DPS94}.
Since we will also follow this strategy in the compact K\"ahler setting, we only point out the difference here and give the full details in the compact K\"ahler setting.
To avoid the additional assumption on the fundamental group,
instead of using alternative 3.10 of \cite{DPS94}, we use the following lemma for the very general fibre with the strongly psef tangent bundle.
Remind that a rationally connected manifold is also simply connected.
The arguments of the homotopy groups thus apply.
\end{proof}
\begin{mylem}
Let $X$ be a projective manifold such that $T_X$ is strongly psef.
Assume $\tilde{q}(X)=0$.
Then $X$ is rationally connected. 
\end{mylem}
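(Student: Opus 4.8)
The plan is to run the maximal rationally connected (MRC) fibration and to show that, under the hypothesis $\tilde{q}(X)=0$, its base must reduce to a point. Let $\phi\colon X\dashrightarrow Z$ be the MRC fibration; after blowing up I may assume that $\phi\colon \hat{X}\to Z$ is a morphism with $Z$ smooth projective and general fibre rationally connected, where $\mu\colon \hat{X}\to X$ is birational. By \cite{GHS} the base $Z$ is not uniruled, so by \cite{BDPP} its canonical bundle $K_Z$ is pseudo-effective. The whole point is to produce the reverse statement, namely that $-K_Z$ is pseudo-effective, out of the strong pseudo-effectivity of $T_X$; this will force $K_Z\equiv 0$, and then I will contradict $\tilde{q}(X)=0$.

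First I would transfer positivity from $X$ to $Z$. On the Zariski-dense open set $\hat{U}\subset \hat{X}$ where $\mu$ is an isomorphism and $\phi$ is a submersion, the differential of $\phi$ gives a surjection $T_{\hat{X}}|_{\hat{U}}=\mu^*T_X|_{\hat{U}}\twoheadrightarrow \phi^*T_Z|_{\hat{U}}$, so there $\phi^*T_Z$ is a quotient of the strongly psef bundle $T_X$. Since a quotient corresponds to a sub-projective-bundle of $\mathbb{P}(T_X)$ on which $\mathcal{O}_{\mathbb{P}(T_X)}(1)$ restricts to $\mathcal{O}_{\mathbb{P}(\phi^*T_Z)}(1)$, and the defining singular set of $T_X$ does not project onto $X$, the bundle $\phi^*T_Z$ is strongly psef over $\hat{U}$. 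Extending the associated closed almost-positive current across the proper analytic set $\hat{X}\setminus \hat{U}$ by Skoda--El Mir (its mass is controlled by the bound $\geq -\varepsilon\,\pi^*\omega$) and then pushing the resulting positivity down along the surjection $\phi$, I obtain that $T_Z$ is strongly psef on $Z$. Taking determinants and using that the determinant of a strongly psef bundle is pseudo-effective, I get that $-K_Z=\det T_Z$ is pseudo-effective. Combined with $K_Z$ pseudo-effective, and since the pseudo-effective cone is salient (it is dual to the movable cone by \cite{BDPP}), this forces $K_Z\equiv 0$.

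Next I would exploit this numerical triviality. Since $c_1(T_Z)=-c_1(K_Z)=0$ and $T_Z$ is strongly psef, Proposition A applies to the projective (hence Fujiki class $\cC$) manifold $Z$ and shows that $T_Z$ is numerically flat, in particular nef. A projective manifold with nef tangent bundle and $K_Z\equiv 0$ is, by the structure theorem of \cite{DPS94}, a torus up to finite \'etale cover: the Albanese map is a smooth fibration over a torus with Fano fibres, and $K_Z\equiv 0$ forces the fibres to be points. Finally I would propagate irregularity back to $X$: any finite \'etale cover $Z'\to Z$ base-changes to a finite \'etale cover of $\hat{X}$, which is birational to a finite \'etale cover $X'$ of $X$, and $\phi^*$ injects $H^0(Z',\Omega^1_{Z'})$ into $H^0(X',\Omega^1_{X'})$; hence $\tilde{q}(Z)\leq \tilde{q}(X)=0$. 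But a positive-dimensional torus has positive irregularity, so $\dim Z\geq 1$ is impossible. Therefore $Z$ is a point and $X$ is rationally connected.

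The main obstacle I anticipate is precisely the transfer of strong pseudo-effectivity from $X$ to the MRC base $Z$. Because the MRC map is only rational, one must resolve it and then argue that the positive current witnessing strong psef-ness of $\phi^*T_Z$ survives both the extension across the exceptional and non-submersion loci and the descent along $\phi$, all while keeping track of the requirement that the singular set does not dominate the base. The determinant step (strongly psef $\Rightarrow$ $\det$ pseudo-effective) and the appeal to Proposition A are the two other points where the \emph{strong}, as opposed to merely weak, pseudo-effectivity is genuinely used, the latter being indispensable in order to rule out a Calabi--Yau or hyperk\"ahler base, which would be compatible with $K_Z\equiv 0$ and $\tilde{q}(Z)=0$ but is excluded once $T_Z$ is forced to be flat.
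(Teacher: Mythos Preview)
Your overall strategy---run the MRC fibration, force $K_Z\equiv 0$ on the base, and then contradict $\tilde q(X)=0$---is sound and close in spirit to what the paper does, but the execution diverges from the paper's proof in an essential way. The paper does not work with the MRC fibration directly. Instead it invokes Theorem~3.10 of \cite{HIM} (built on \cite{Hor07}) as a black box: this already produces a genuine \emph{morphism} $f\colon X\to Y$ with $Y$ an \'etale quotient of an abelian variety $A$ and rationally connected general fibre. From there the argument is two lines: base-change $X$ along $A\to Y$ to a finite \'etale cover $\tilde X$, which still has strongly psef tangent bundle, and observe $\pi^*H^0(A,\Omega^1_A)\hookrightarrow H^0(\tilde X,\Omega^1_{\tilde X})$, contradicting $\tilde q(X)=0$ unless $\dim Y=0$.

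The gap in your proposal is exactly the one you flag, and it is not a minor technicality: it is essentially the entire content of the cited results \cite{HIM}, \cite{Hor07}. After resolving the MRC map you no longer know that $T_{\hat X}$ is strongly psef (blow-ups destroy this in general), and there is no global sheaf morphism $\mu^*T_X\to\phi^*T_Z$; the surjection exists only over $\hat U$. Consequently the embedding $\mathbb{P}(\phi^*T_Z|_{\hat U})\hookrightarrow\mathbb{P}(\mu^*T_X)$ is only defined over $\hat U$, and the potential you obtain by restriction is of the form $\varphi=g+\psi\circ\iota$ where $g$ compares the pulled-back metric to a global one on $\mathcal{O}_{\mathbb{P}(\phi^*T_Z)}(1)$. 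While $\psi\circ\iota$ is bounded above, $g$ has no reason to remain bounded as the quotient map degenerates along the exceptional and non-submersion loci, so the Skoda--El~Mir extension in the correct cohomology class is not justified as written. Your later steps (pushing down along $\phi$, using $-K_Z$ psef $+$ $K_Z$ psef $\Rightarrow K_Z\equiv 0$, Proposition~A, \cite{DPS94}, and the \'etale-cover irregularity transfer via $\pi_1(\hat X)\cong\pi_1(X)$) are all fine; the difficulty is concentrated entirely in that transfer, which is precisely why the paper outsources it.
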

\begin{proof}
By theorem 3.10 of \cite{HIM} (based on the work of \cite{Hor07}), there exists a surjective morphism $f: X \to Y$ such that $Y$ admits a finite \'etale cover $A \to Y$ by an abelian variety $A$ and that the general fibre of $f$ is rational connected.

We claim that $Y$ is a point.
If it is not the case, $A$ is an abelian variety of positive dimensions.
There exists a base change
$$
\begin{tikzcd}
\tilde{X} \arrow[r] \arrow[d,"\pi"]
& X \arrow[d] \\
A \arrow[r]
& Y
\end{tikzcd}
$$
such that $\tilde{X}$ is a projective manifold with $T_{\tilde{X}}$ strongly psef.
Then $H^0(\tilde{X}, \Omega^1_{\tilde{X}}) \neq 0$ since it contains
$\pi^* H^0(A, \Omega^1_{A})$
and $f$ (hence $\pi$) is surjective.
This contradicts the condition that $\tilde{q}(X)=0$.
\end{proof}
In the following, we generalize the above lemma to the compact K\"ahler case with additional assumptions.
First, we have the following observation.
\begin{mylem}
Let $X$ be a simply connected compact K\"ahler manifold such that $T_X$ is strongly psef.
Then $X$ is rationally connected if and only if $X$ is projective. 
\end{mylem}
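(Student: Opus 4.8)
The plan is to prove the two implications separately, and the main observation is that one of them is a completely general statement about compact K\"ahler manifolds, while the other is where the strongly psef hypothesis (entering through the preceding lemma) and simple connectivity do their work.

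First I would treat the direction that $X$ rationally connected implies $X$ projective, which uses neither the hypothesis on $T_X$ nor simple connectivity. If $X$ is rationally connected then $H^0(X, \Omega^p_X) = 0$ for every $p \geq 1$; in particular $h^{2,0}(X) = 0$, and by complex conjugation $h^{0,2}(X) = 0$. Consequently the Hodge decomposition $H^2(X,\C) = H^{2,0} \oplus H^{1,1} \oplus H^{0,2}$ collapses to $H^2(X,\R) = H^{1,1}(X,\R)$. Since $X$ is K\"ahler, its K\"ahler cone is a nonempty open cone inside $H^{1,1}(X,\R) = H^2(X,\R)$, and as $H^2(X,\Q)$ is dense it contains a rational class. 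A rational K\"ahler class makes $X$ projective by the Kodaira embedding theorem; alternatively one may cite directly the theorem of \cite{Cam} that a rationally connected compact K\"ahler manifold is projective.

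Conversely, to deduce rational connectedness from projectivity, I would reduce to the preceding lemma by exploiting simple connectivity twice. A connected finite \'etale cover of a simply connected manifold is trivial, since it corresponds to a finite-index subgroup of $\pi_1(X) = \{1\}$; hence $X$ admits no nontrivial such cover and $\tilde{q}(X) = q(X)$. On the other hand $b_1(X) = 0$ because $X$ is simply connected, and for a compact K\"ahler manifold $b_1 = 2q$, so $q(X) = 0$. Combining these gives $\tilde{q}(X) = 0$. Since $X$ is now assumed projective with $T_X$ strongly psef, the preceding (projective) lemma applies verbatim and yields that $X$ is rationally connected.

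I expect no genuine obstacle here: the real content is entirely imported from the preceding lemma, which itself rests on the MRC-quotient machinery of \cite{HIM}, \cite{BDPP}, \cite{GHS}, \cite{Hor07}, while everything above is bookkeeping. The one point deserving care is the irregularity reduction, namely the simultaneous use of simple connectivity to kill the nontrivial \'etale covers and to force $q(X) = 0$; this is precisely the step at which projectivity is indispensable, since without it one cannot yet invoke \cite{HIM} and \cite{Hor07}. This is exactly why the lemma is phrased as a bridge: the projective case supplies the rational connectedness, and the general principle $h^{2,0} = 0 \Rightarrow$ projective supplies the converse.
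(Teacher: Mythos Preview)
Your proof is correct and follows essentially the same approach as the paper: the paper dispatches the implication $\text{RC}\Rightarrow\text{projective}$ by citing Campana \cite{Cam} (which you unpack via $h^{2,0}=0$ and Kodaira, then also cite), and handles $\text{projective}\Rightarrow\text{RC}$ by invoking the preceding projective lemma, leaving implicit the observation that simple connectivity forces $\tilde q(X)=q(X)=0$, which you spell out explicitly. The only difference is the level of detail; the logical structure is identical.
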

\begin{proof}
The necessary part is just the above lemma.
The sufficient part is a direct consequence of the following important result of Campana (Corollary on page 212 \cite{Cam});

{\it 
A rational connected compact K\"ahler manifold is in fact projective.
}
\end{proof}
Now we shall show that the manifolds in the above lemma are always projective.
Then we will consider the manifolds $X$ with a strongly psef tangent bundle and $\tilde{q}(X)=0$.
Before considering the general case, we consider the case of dimension 2. 
By assumption, $-K_X$ is psef by Corollary 1 in \cite{Wu20}.
$c_1(-K_X) \neq 0$ otherwise it is a complex torus by Corollary 9 of \cite{Wu20}.
In fact, a compact K\"ahler manifold with a strongly psef tangent bundle and trivial first Chern class admits a finite \'etale cover of a complex torus.
Combining these two facts for the non-torsion psef line bundle $-K_X$, for any $m >1$, $H^0(X, mK_X)=0$.
Since $H^{0,1}(X)=0$, the surface is rational by the classical Castelnuovo theorem.

We will need the following lemma analogue to Lemma 1.16 in \cite{DPS94}.
\begin{mylem}
Let $E$ be a strongly psef vector bundle over a connected compact K\"ahler n-fold $(X, \omega)$ and let $\sigma \in H^0(X,E^*)$ be a non-zero section. Then $\sigma$ does not vanish anywhere.
\end{mylem}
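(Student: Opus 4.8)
The plan is to follow the strategy of Lemma 1.16 in \cite{DPS94}, turning $\sigma$ into a rational section of the projectivized bundle and exploiting the pseudo-effectivity of $\cO_{\P(E)}(1)$. The one place where strong (rather than plain) pseudo-effectivity will enter is in preventing the pulled-back metric from degenerating, and I expect this to be the crux.

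First I would use $\sigma$ to build a rational section $s_\sigma \colon X \dashrightarrow \P(E)$. Since $\P(E)$ is the bundle of hyperplanes of $E$, a point of the fibre over $x$ is a hyperplane of $E_x$, and wherever $\sigma_x \neq 0$ the kernel $\ker \sigma_x \subset E_x$ provides such a hyperplane; this defines $s_\sigma$ on the open set $U = X \setminus Z(\sigma)$, with $\pi \circ s_\sigma = \mathrm{id}_U$. Regarding $\sigma \in H^0(X,E^*)$ as a map $\sigma \colon E \to \cO_X$ and using the tautological quotient $\pi^* E \twoheadrightarrow \cO_{\P(E)}(1)$ with kernel $S$, one checks that along the image of $s_\sigma$ the form $\sigma$ kills $S$, hence factors through $\cO_{\P(E)}(1)$; over $U$ this exhibits $\sigma$ as a trivialisation of $s_\sigma^* \cO_{\P(E)}(1)$.

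Next I would eliminate the indeterminacy of $s_\sigma$. By Hironaka there is a modification $\mu \colon \hat X \to X$ with $\hat X$ compact K\"ahler (blow-ups of smooth centres preserve the K\"ahler condition) and a morphism $g \colon \hat X \to \P(E)$ with $g = s_\sigma \circ \mu$ over $U$. Put $\hat L := g^* \cO_{\P(E)}(1)$. The factorisation above globalises: $\mu^* \sigma$ factors through $\hat L$, giving a section $\tau \in H^0(\hat X, \hat L^{-1})$ which is nonvanishing over $\mu^{-1}(U)$ and whose divisor $\mathrm{div}(\tau)$ is supported over $\mu^{-1}(Z(\sigma))$. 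Thus $\mathrm{div}(\tau)$ is nonzero precisely when $Z(\sigma) \neq \emptyset$, in which case $\hat L^{-1} \cong \cO_{\hat X}(\mathrm{div}\,\tau)$ with $\mathrm{div}\,\tau$ effective and nonzero, i.e. $c_1(\hat L) = -[\mathrm{div}\,\tau]$ is represented by a nonzero class of the form $-(\text{effective})$. To contradict this I would show $\hat L$ is pseudo-effective. Here strong pseudo-effectivity is used: for each $\varepsilon$ there is a metric $h_\varepsilon$ on $\cO_{\P(E)}(1)$ with analytic singularities, $i\Theta(h_\varepsilon) \geq -\varepsilon \pi^* \omega$, and $\pi(\Sing(h_\varepsilon)) \neq X$. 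Since $g(\hat X) = \overline{s_\sigma(X)}$ surjects onto $X$ under $\pi$, it cannot lie inside $\Sing(h_\varepsilon)$, so $g^* h_\varepsilon$ is a genuine singular metric on $\hat L$ with $i\Theta(g^* h_\varepsilon) \geq -\varepsilon \mu^* \omega$; as $\varepsilon \to 0$, closedness of the pseudo-effective cone gives that $c_1(\hat L)$ is pseudo-effective. Fixing a K\"ahler class $\hat\omega$ on $\hat X$ then yields $0 \leq c_1(\hat L) \cdot \hat\omega^{\,n-1} = -[\mathrm{div}\,\tau] \cdot \hat\omega^{\,n-1} < 0$, a contradiction, so $Z(\sigma) = \emptyset$.

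The main obstacle is exactly the pseudo-effectivity of $\hat L$. For a merely pseudo-effective $E$ the singular set of $h_\varepsilon$ could project onto all of $X$, the pulled-back metric would degenerate to $-\infty$ along $g(\hat X)$, and the argument would break down. The hypothesis $\pi(\Sing(h_\varepsilon)) \neq X$, combined with the surjectivity of $\pi$ on $g(\hat X)$, is what makes the pullback legitimate; this is the precise role of the word \emph{strongly}, and verifying it cleanly (together with the standard but slightly delicate claim that $\tau$ is globally defined with no horizontal zeros) is where I would spend the most care.
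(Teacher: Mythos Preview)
Your argument is correct, and the use of strong pseudo-effectivity is identified at exactly the right spot: without $\pi(\Sing h_\varepsilon)\neq X$ the pullback $g^*h_\varepsilon$ could be identically $-\infty$ on $g(\hat X)$, and the proof would collapse.

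The paper takes a different route. Rather than resolving the rational section into $\P(E)$ and arguing with divisors on a blow-up, it stays on $X$: the section $\sigma$ induces holomorphic maps $X\to S^mE^*$, and the Finsler weights $\log|\xi|_{h_m}$ coming from the strongly psef metrics pull back to a family of \emph{exact} closed $(1,1)$-currents $T_m=\tfrac1m\,i\partial\bar\partial\log|\sigma^m|_{h_m}$ on $X$ satisfying $T_m\ge -\varepsilon_m\omega$ with $\varepsilon_m\to 0$. Mass considerations force $T_m\to 0$ weakly, while each $T_m$ has Lelong number at a zero $x$ of $\sigma$ bounded below by $\mathrm{ord}_x(\sigma)>0$; Siu's semicontinuity of Lelong numbers under weak limits then gives the contradiction.

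So the two proofs diverge after the common first step of pulling positivity back along $\sigma$. Your version trades the Lelong-number/semicontinuity machinery for a Hironaka resolution and an elementary intersection inequality $0\le c_1(\hat L)\cdot\hat\omega^{\,n-1}=-[\mathrm{div}\,\tau]\cdot\hat\omega^{\,n-1}<0$; the paper's version avoids any blow-up but invokes the deeper analytic fact \cite{Siu74}. Both are short; yours is closer in spirit to the original nef argument of \cite{DPS94}, while the paper's is a genuinely current-theoretic variant.
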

\begin{proof}
Since $E$ is strongly psef, there exists a sequence of quasi-psh functions $w_m(x, \xi)=\log( |\xi|_{h_m})$ with analytic singularities induced from Hermitian metrics $h_m$ on $\Sym^m E^*$ 
such that
the singularity locus projects onto a proper Zariski closed set $ Z_m$ in $X$,
and 
$$i \d \dbar w_m \ge -m\varepsilon_m p^*\omega $$ in the sense of currents
with $\lim \varepsilon_m = 0$.
Here $p: \Sym^m E^* \to X$ is the projection.

The non-zero section induces a holomorphic map $X \to S^m E^*$.
The pullback of $i \d \dbar w_m$ is well defined since everything is smooth on a non-empty Zariski open set.
Divide these currents by $m$ respectively.
In conclusion, there exists a sequence of currents $T_m$ on $X$ such that for any $\varepsilon >0$, $T_m \geq -\varepsilon \omega$ for $m \geq m_0(\varepsilon)$ large enough.
Notice that by construction, $T_m$ are exact.
Thus we have
$$\int_X (T_m+\varepsilon \omega) \wedge \omega^{n-1}=\varepsilon \int_X \omega^n.$$
In particular, up to taking a subsequence $T_m$ tends to 0 in the sense of currents as $m$ tends to infinity.

If $\sigma(x) = 0$ at some point $x \in X$, the Lelong number $\nu (T_m,x)$ would be at least equal to the vanishing order of $\sigma$ at $x$. 
By the semi-continuity of the Lelong number \cite{Siu74},
the Lelong number of any weak limit of $T_m$ would have a strictly positive Lelong number at $x$.
A contradiction.
\end{proof}
This result is entirely false if we only assume that the vector bundle is psef in the weak sense.
An easy counter-example is taking $E$ to be $\cO(A) \oplus \cO(-A)$ with a very ample divisor $A$.
The zero set of any global section of $E^*$ is never empty.

Now we can prove the simply connected case.
\begin{mythm}
Let $X$ be a simply connected compact K\"ahler manifold such that $T_X$ is strongly psef.
Then $X$ is rationally connected. 
\end{mythm}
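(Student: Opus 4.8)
The plan is to reduce the statement to the projectivity of $X$ and then to the absence of holomorphic $2$-forms. Since $X$ is simply connected we have $b_1(X)=0$, hence $q(X)=h^0(X,\Omega^1_X)=0$, and $X$ has no non-trivial connected finite \'etale cover, so $\tilde q(X)=q(X)=0$. By the previous lemma, for such an $X$ rational connectedness is equivalent to projectivity, so it suffices to prove that $X$ is projective. For this I would use Hodge theory: if $H^0(X,\Omega^2_X)=0$ then by Hodge symmetry $H^{0,2}(X)=0$ as well, so $H^2(X,\R)=H^{1,1}(X,\R)$; the K\"ahler cone is then an open subcone of $H^{1,1}(X,\R)$ meeting the dense rational lattice $H^2(X,\Q)$, which produces an integral K\"ahler class and, by the Kodaira embedding theorem, shows that $X$ is projective. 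Thus everything is reduced to showing that $X$ carries no non-zero holomorphic $2$-form.

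So suppose for contradiction that $0\neq\sigma\in H^0(X,\Omega^2_X)$, and view $\sigma$ as a homomorphism $\sigma\colon T_X\to\Omega^1_X$; let $2r$ be its generic rank, so that $\sigma^{\wedge r}\in H^0(X,\Omega^{2r}_X)=H^0(X,(\wedge^{2r}T_X)^*)$ is non-zero. The key point is that $\wedge^{2r}T_X$ is again strongly psef: it is a quotient bundle of the tensor power $T_X^{\otimes 2r}$, strong pseudo-effectivity is preserved under tensor products (cf.\ \cite{BDPP}), and it is preserved under passing to a quotient bundle, since a surjection $E\twoheadrightarrow Q$ yields a sub-bundle $\P(Q)\hookrightarrow\P(E)$ which surjects onto $X$ and therefore cannot be contained in $\pi(\Sing(h_\varepsilon))\subsetneq X$, so the almost-positive metrics $h_\varepsilon$ restrict to it. Applying the vanishing lemma to the section $\sigma^{\wedge r}$ of the dual of the strongly psef bundle $\wedge^{2r}T_X$, I conclude that $\sigma^{\wedge r}$ vanishes nowhere, i.e.\ $\sigma$ has constant rank $2r$. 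Consequently $\mathcal F:=\operatorname{Im}(\sigma)\subset\Omega^1_X$ is a sub-bundle which is at the same time a quotient bundle of $T_X$, and since $\sigma$ factors as a nondegenerate form on $T_X/\ker\sigma\cong\mathcal F^*$ we have $\sigma\in\wedge^2\mathcal F$; hence $\sigma^{\wedge r}$ is a nowhere-vanishing section of $\det\mathcal F=\wedge^{2r}\mathcal F$, which trivializes $\det\mathcal F$ and gives $c_1(\mathcal F)=0$.

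Being a quotient bundle of the strongly psef $T_X$, the bundle $\mathcal F$ is itself strongly psef, and $c_1(\mathcal F)=0$; by Proposition A it is therefore numerically flat, that is, an iterated extension of Hermitian flat bundles given by unitary representations of $\pi_1(X)$. Since $X$ is simply connected these representations are trivial, so $\mathcal F$ is an iterated extension of copies of $\cO_X$ and in particular $H^0(X,\mathcal F)\neq 0$. But $\mathcal F\subset\Omega^1_X$, so this produces a non-zero holomorphic $1$-form, contradicting $q(X)=0$. Hence $H^0(X,\Omega^2_X)=0$, the manifold $X$ is projective, and the previous lemma yields that $X$ is rationally connected.

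The step I expect to be the genuine obstacle is the positivity bookkeeping of the second paragraph, namely that $\wedge^{2r}T_X$ and $\mathcal F$ inherit strong pseudo-effectivity from $T_X$. Unlike nefness -- where stability under tensor powers and quotients is classical and drives the analogous argument of \cite{DPS94} -- strong pseudo-effectivity is a property of a current on a projectivized bundle, and a psef class need not restrict to a subvariety. What rescues the argument is precisely the extra clause of Definition A that $\pi(\Sing(h_\varepsilon))$ does not cover $X$: this guarantees that the relevant sub-projective-bundle is not absorbed into the singular locus, so the almost-positive metrics genuinely restrict. Making this restriction argument rigorous, together with the tensor-product stability, is the technical heart; once constant rank has been secured via the vanishing lemma, the remainder of the proof is formal.
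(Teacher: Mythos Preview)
Your proof is correct and follows essentially the same route as the paper's own argument: reduce to $H^0(X,\Omega^2_X)=0$, use the non-vanishing lemma for the top wedge power of a hypothetical $2$-form to get constant rank, extract a strongly psef quotient of $T_X$ with trivial first Chern class, apply the numerically-flat result, and produce a holomorphic $1$-form from the trivial graded pieces. The only cosmetic difference is that the paper names the \emph{kernel} of the contraction map $\mathcal F\subset T_X$ (viewed as a foliation) and works with the quotient $T_X/\mathcal F$, whereas you name the \emph{image} in $\Omega^1_X$; these are canonically isomorphic, and your trivialization of $\det\mathcal F$ via $\sigma^{\wedge r}$ is equivalent to the paper's self-duality argument for $c_1(T_X/\mathcal F)=0$.
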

\begin{proof}
By Lemma 2, we reduce to prove the projectivity of $X$ by showing that $H^{0,2}(X)=0$.

Assume that it is not the case.
Let $\omega \in H^0(X,  \Omega^2_X)$ be a non zero section.
Let $q$ be the maximal natural number such that $\omega^q \neq 0$ but $\omega^{q+1} =0$.
Since $\wedge^q T_X$ is strongly psef,
by the above lemma 3,
$\omega^q$ does not vanish anywhere.
Define a foliation on $X$ in the following way
$$\cF:=\{\xi \in T_{X,x}, i_{\xi} \omega(x)=0\}$$
where $i_\xi$ is the contraction.
In local coordinates, $\cF$ is given by the solution of the linear system $A(z) \xi=0$ where $A(z)$ is a matrix of local holomorphic functions.
Since $\omega^q$ does not vanish anywhere,  at any point, there exists an invertible minor of rank $q$ at this point.
$\omega^{q+1}=0$ implies that any minor of rank $q+1$ is not invertible at any point.
In particular, the rank of $\cF$ is constant over $X$, implying that $\cF$ is a regular foliation.

By the very definition of $\cF$, $\omega$ induces a non-degenerated 2 form over the quotient bundle $T_X/\cF$, which varies holomorphically.
In particular, $T_X / \cF$ is holomorphically isomorphic to its dual bundle.
Thus we have that $c_1(T_X / \cF)=0$.

Since $T_X$ is strongly psef, $T_X/ \cF$ is also strongly psef.
By the main theorem of \cite{Wu20}, $T_X / \cF$ is a numerical flat vector bundle.
By Theorem 1.18 of \cite{DPS94},
$T_X / \cF$ admits a filtration
$$\{0\}=E_0 \subset E_1 \subset \cdots \subset E_p=T_X / \cF$$by vector subbundles such that the quotients $E_k/E_{k-1}$ are hermitian flat, i.e. given by unitary representations $\pi_1(X) \to U(r_k)$.
Since $X$ is simply connected, the representations are trivial, which means that $E_k / E_{k-1}$ are trivial vector bundles.

In particular, there exists a non-zero section of the dual bundle $T_X / \cF$.
On the other hand, $(T_X / \cF)^*$ is a subbundle of $\Omega^1_X$.
Thus $H^0(X, \Omega^1_X) \neq 0$
which contradicts the assumption that the manifold is simply connected.
\end{proof}
\begin{mylem}
Let $X$ be a compact K\"ahler manifold such that $T_X$ is strongly psef with a finite fundamental group.
Then $X$ is rationally connected.
In particular, $X$ is simply connected.
\end{mylem}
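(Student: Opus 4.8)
The plan is to reduce the statement to the simply connected case already settled in Theorem 1 by passing to the universal cover. Since $\pi_1(X)$ is finite, the universal covering map $\pi\colon \tilde X \to X$ is a finite \'etale cover of compact complex manifolds; as $X$ is compact K\"ahler, so is $\tilde X$ (pull back a K\"ahler form via $\pi$), and by construction $\tilde X$ is simply connected.

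The first step is to check that $T_{\tilde X}=\pi^* T_X$ is again strongly psef. Forming the fibre product, one has $\P(T_{\tilde X})\cong \tilde X\times_X \P(T_X)$ with an induced finite \'etale map $\Pi\colon \P(T_{\tilde X})\to \P(T_X)$ satisfying $\cO_{\P(T_{\tilde X})}(1)=\Pi^*\cO_{\P(T_X)}(1)$ and $p\circ\Pi=\pi\circ\tilde p$, where $p$ and $\tilde p$ denote the two bundle projections. Pulling back the metrics $h_\varepsilon$ furnished by Definition A via $\Pi$ yields metrics $\Pi^* h_\varepsilon$ on $\cO_{\P(T_{\tilde X})}(1)$ with
$$i\Theta(\Pi^* h_\varepsilon)=\Pi^* i\Theta(h_\varepsilon)\ge -\varepsilon\,\Pi^* p^*\omega=-\varepsilon\,\tilde p^*(\pi^*\omega),$$
so the curvature bound persists with respect to the K\"ahler form $\pi^*\omega$ on $\tilde X$. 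The singular set of $\Pi^* h_\varepsilon$ is $\Pi^{-1}(\Sing(h_\varepsilon))$, and since $\pi$ is finite and surjective its projection is contained in $\pi^{-1}\big(p(\Sing(h_\varepsilon))\big)$, which is a proper analytic subset of $\tilde X$ because $p(\Sing(h_\varepsilon))\ne X$. Hence the projection of the singular locus does not cover $\tilde X$, and $T_{\tilde X}$ is strongly psef.

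With this in hand, Theorem 1 applies to the simply connected compact K\"ahler manifold $\tilde X$ with strongly psef tangent bundle, giving that $\tilde X$ is rationally connected, and hence projective by Campana's theorem (as in Lemma 2). To descend to $X$, I would push rational curves down along $\pi$: given two general points $x_1,x_2\in X$, lift them to points $y_1,y_2\in\tilde X$ (again general, as $\pi$ is \'etale), join them by a rational curve $C\subset\tilde X$, and observe that $\pi(C)$ is an irreducible rational curve through $x_1$ and $x_2$ (it is not a point since $x_1\ne x_2$). Thus $X$ is rationally connected as well, and since a rationally connected manifold is simply connected this yields the final assertion (and incidentally forces $\pi_1(X)=\{1\}$, so the cover was in fact trivial).

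The conceptual content is entirely contained in Theorem 1; the only point requiring care is the stability of strong pseudo-effectivity under the finite \'etale pullback, in particular the verification that the projection of the singular locus of the pulled-back metrics remains a proper subset of $\tilde X$. This is the step I expect to be the main (though mild) obstacle, and it is handled by the fibre-product diagram chase above.
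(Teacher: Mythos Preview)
Your argument is correct. Both you and the paper begin identically: pass to the universal cover $\tilde X$, note that $T_{\tilde X}=\pi^*T_X$ is strongly psef (you verify this carefully via the fibre-product diagram, while the paper simply asserts it), and apply Theorem~1 to conclude that $\tilde X$ is rationally connected. The divergence is only in the final step. You push rational curves down along $\pi$ to see that $X$ is rationally connected, and then invoke the fact that a rationally connected manifold is simply connected to force $\pi_1(X)=1$. The paper instead argues numerically: rational connectedness of $\tilde X$ gives $\chi(\tilde X,\cO_{\tilde X})=1$, and multiplicativity of the holomorphic Euler characteristic under finite \'etale covers yields $1=\deg(\pi)\cdot\chi(X,\cO_X)$, hence $\deg(\pi)=1$ and $\tilde X=X$ directly. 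The paper's route is a one-line numerical stroke that shows the cover is trivial before ever mentioning rational curves on $X$; yours is more geometric and implicitly uses the standard fact that the image of a rationally connected variety is rationally connected (or, equivalently, that on a smooth projective rationally connected variety in characteristic zero any two points, not just very general ones, can be joined by a rational curve). Both approaches are valid and of comparable length.
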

\begin{proof}
By assumption, there exists a finite \'etale cover $\tilde{X}$ of $X$ such that $T_{\tilde{X}}$ is strongly psef and $\tilde{X}$ is simply connected.
By Theorem 1, $\tilde{X}$ is rationally connected.
In particular, $\chi(\tilde{X}, \cO_{\tilde{X}})=1$
since $H^0(\tilde{X}, \Omega^p_{\tilde{X}})=0(\forall p \geq 1)$
(cf. for example 4.18 of \cite{Deb}).
Since $\chi(\tilde{X}, \cO_{\tilde{X}})$ is equal to the degree of cover times $\chi(X, \cO_X)$, the degree is 1.
In other words, $\tilde{X} = X.$
\end{proof}
Now we generalise the rational connectedness to the compact K\"ahler manifolds $X$ with strongly psef tangent bundle and $\tilde{q}(X)=0$ assuming Conjecture A.


\begin{myprop}
Let $X$ be a compact K\"ahler manifold such that $T_X$ is strongly psef with $\tilde{q}(X)=0$.
Assume that the fundamental group of $X$ has subexponential growth.
Then $X$ is rationally connected.
\end{myprop}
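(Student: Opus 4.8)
The plan is to reduce the statement to the projective case settled in Lemma 1. Thus I would first show that $X$ is projective, and then invoke Lemma 1 (applicable since $X$ is then projective with $T_X$ strongly psef and $\tilde{q}(X)=0$) to conclude rational connectedness. To obtain projectivity it suffices to prove $H^0(X,\Omega^2_X)=0$: the Hodge decomposition then forces $H^2(X,\R)=H^{1,1}(X,\R)$, so rational classes are dense in the nonempty open K\"ahler cone and Kodaira's embedding criterion makes $X$ projective.

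I would prove $H^0(X,\Omega^2_X)=0$ by contradiction, retracing the argument of Theorem 1 up to the point where simple connectedness was invoked. Given a nonzero $\omega\in H^0(X,\Omega^2_X)$, let $q$ be maximal with $\omega^q\neq 0$; then $\omega^q\in H^0(X,(\wedge^{2q}T_X)^*)$ and $\wedge^{2q}T_X$ is strongly psef, so Lemma 3 forces $\omega^q$ to be nowhere vanishing. Hence the annihilator $\mathcal{F}$ of $\omega$ is a regular foliation and $\omega$ descends to a nondegenerate holomorphic two-form on $Q:=T_X/\mathcal{F}$, giving $Q\cong Q^*$ and thus $c_1(Q)=0$. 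As a quotient of $T_X$, the bundle $Q$ is strongly psef, so by the main theorem of \cite{Wu20} it is numerically flat, and by Theorem 1.18 of \cite{DPS94} it carries a filtration by subbundles with Hermitian flat graded pieces, given by unitary representations $\rho_k:\pi_1(X)\to U(r_k)$.

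The replacement for triviality of $\pi_1(X)$ is a Tits-alternative argument applied to $\rho:=\bigoplus_k\rho_k:\pi_1(X)\to U(r)$. Since $X$ is compact, $\pi_1(X)$ is finitely generated, so its linear image is either virtually solvable or contains a nonabelian free subgroup. The latter would lift through $\rho$ (a free group splits any surjection onto it) to a copy of $F_2$ inside $\pi_1(X)$; but finitely generated subgroups of a group of subexponential growth again have subexponential growth, whereas $F_2$ grows exponentially, so the subexponential-growth hypothesis excludes this case. In the virtually solvable case, a virtually solvable subgroup of the compact group $U(r)$ is virtually abelian, since the identity component of its closure is a compact connected solvable group, hence a torus. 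Choosing a finite-index $\Gamma\leq\pi_1(X)$ with $\rho(\Gamma)$ abelian, the associated finite \'etale cover has vanishing first Betti number (as $\tilde{q}(X)=0$), so $\Gamma^{\mathrm{ab}}$ is finite; being an abelian quotient of $\Gamma$, the group $\rho(\Gamma)$ is then finite, and therefore $\rho(\pi_1(X))$ is finite as well. Passing to the finite \'etale cover $X'\to X$ attached to $\ker\rho$ trivializes every $\rho_k$, so the pulled-back filtration of the subbundle $Q^*\hookrightarrow\Omega^1_X$ has trivial graded pieces and hence a nonzero section; this yields $H^0(X',\Omega^1_{X'})\neq 0$, contradicting $\tilde{q}(X)=0$.

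I expect the main delicate point to be the exclusion of the free subgroup case, which is precisely where the subexponential-growth hypothesis is indispensable: without it one only knows the dichotomy, not which branch occurs. The remaining care lies in the virtually abelian case, where one must check that the finite-index subgroup realizing commutativity of the image corresponds to an honest connected \'etale cover whose irregularity is governed by $\tilde{q}(X)$, and that the lifting of $F_2$ through $\rho$ is genuinely injective, so that both branches of the alternative are closed.
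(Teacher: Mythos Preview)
Your proposal is correct and follows essentially the same architecture as the paper's proof: reduce to projectivity by showing $H^0(X,\Omega^2_X)=0$, build the numerically flat quotient $Q=T_X/\mathcal{F}$ from a putative nonzero $2$-form, extract unitary representations from its hermitian-flat filtration, apply the Tits alternative, rule out the free-subgroup branch via the subexponential-growth hypothesis, and contradict $\tilde{q}(X)=0$ in the solvable branch.

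The only genuine difference is in the solvable branch. The paper works with a single representation (coming from $E_1$) and, following \cite{DPS93}/\cite{DPS94}, walks down the derived series of the finite-index solvable image to locate an infinite abelian quotient and hence a finite \'etale cover with infinite $H_1$, contradicting $\tilde{q}(X)=0$. You instead take the direct sum of all the $\rho_k$, observe that a virtually solvable subgroup of the compact group $U(r)$ is virtually abelian (via the fact that the closure is compact with solvable, hence toral, identity component), and then use $\tilde{q}(X)=0$ to force the abelian image---as a quotient of the finite group $\Gamma^{\mathrm{ab}}$---to be finite, reducing everything to the finite-image case. Both routes are valid; yours is slightly more streamlined but invokes the extra Lie-theoretic fact about compact solvable groups, while the paper's derived-series argument stays purely group-theoretic and mirrors the original \cite{DPS94} template.
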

\begin{proof}
The proof follows the outline of the proof of Theorem 1 initially from Proposition 3.10 of \cite{DPS94}.
As proof of Theorem 1, it is sufficient to show the projectivity by proving that $H^0(X, \Omega^2_X)=0$.

The only place in the proof of Theorem 1, which uses the assumption that $X$ is simply connected, is by showing that the quotient bundles $E_1$ are trivial.
The hermitian flat vector bundle $E_1$ is equivalent to a representation of $\pi_1(X) \to U(r)$ where $r$ is the rank of $E_1$.

By the alternative theorem of Tits \cite{Ti72}, the image of $\pi_1(X)$ either contains a non-abelian free subgroup or a solvable subgroup of finite index.
By our assumption, $\pi_1(X)$ has subexponential growth and cannot contain a non-abelian free subgroup.  

In conclusion, the image of $\pi_1(X)$  contains a solvable subgroup denoted by $\Gamma$ of finite index.
Let 
$$0=\Gamma_N \subset \Gamma_{N-1} \subset \cdots \subset \Gamma= \Gamma_0 $$
be the series of derived subgroups $\Gamma_{i+1}= [\Gamma_i,\Gamma_i]$.

If the image of $\pi_1(X)$ is finite, the pull-back of $E_1$ by some finite \'etale cover $p: \tilde{X} \to X$ gives rise to a trivial subbundle $p^* E_1 \subset p^* \Omega^1_X= \Omega^1_{\tilde{X}}$ as in the proof of Theorem 1.
In particular, $H^0(\tilde{X}, \Omega^1_{\tilde{X}}) \neq 0$ which induces a contradiction.

Otherwise, the image of $\pi_1(X)$ and $\Gamma$ is infinite.
Let $i$ be the smallest index such that $\Gamma_i / \Gamma_{i+1}$ is infinite. 
Then $G / \Gamma, \Gamma/ \Gamma_1, \cdots, \Gamma_{i-1}/ \Gamma_i$ are finite, and so is $G / \Gamma_i$.
The pre-image of $\Gamma_i$ (denoted by $\Pi_i$) is of finite index in  $\pi_1(X)$ and gives rise to a finite  \'etale cover
$p: \tilde{X} \to X$ with $\pi_1(\tilde{X}) \cong \Pi_i$.
The representation of $\pi_1(X)$ would induce a surjective homomorphism
$$H_1(\tilde{X}, \Z)= \Pi_i / [\Pi_i, \Pi_i] \to \Gamma_i / [\Gamma_i, \Gamma_i]=\Gamma_{i+1}.$$
In particular, $H_1(\tilde{X}, \Z)$ is infinite which implies that the first Betti number of $\tilde{X}$ is not zero.
By Hodge decomposition theorem, it implies that $H^0(\tilde{X}, \Omega^1_{\tilde{X}}) \neq 0$
which contradicts the fact that $\tilde{q}(X) =0$.
\end{proof}
Now we can extend Proposition 1 to the compact K\"ahler case.
\begin{mythm}
Let $X$ be a compact K\"ahler manifold such that $T_X$ is strongly psef.
Assume that Conjecture A holds.
Let $\tilde{X}$ be a finite \'etale cover of maximum irregularity $q= q(\tilde{X} ) = \tilde{q}(X)$.
Then the Albanese morphism of $\tilde{X}$ is a smooth fibration such that the very general fibre $F$ has a strongly psef tangent bundle and is rational connected.
Moreover $\pi_1(\tilde{X}) \cong \Z^{2\tilde{q}(X)}$.
More precisely, as an abstract group, the fundamental group $\pi_1 (X)$ is an extension of a finite group by $\Z^{2q}$.
In particular, the structural result holds for a compact K\"ahler manifold $X$ such that $T_X$ is strongly psef and $-K_X$ is nef.
\end{mythm}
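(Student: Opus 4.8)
The plan is to run the argument of Proposition 1 (the projective case) essentially verbatim, replacing at the single crucial point the appeal to Lemma 1 by the compact K\"ahler Proposition 2, and to follow the proofs of Theorems 3.14 and 3.15 of \cite{DPS94} for the fibration structure and the fundamental group. After replacing $X$ by its finite \'etale cover $\tilde{X}$ of maximal irregularity (so that $q(\tilde{X})=\tilde{q}(X)=q$ and $\tilde{q}$ does not grow under further \'etale covers), I would consider the Albanese morphism $a\colon \tilde{X}\to A:=\Alb(\tilde{X})$, with $A$ a torus of dimension $q$; recall $-K_{\tilde{X}}=\det T_{\tilde{X}}$ is psef by Corollary 1 of \cite{Wu20}. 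The first goal is the structural statement: $a$ is a surjective submersion onto $A$ and the relative tangent bundle $T_{\tilde{X}/A}$ is strongly psef, equivalently the very general fibre $F$ is a smooth compact K\"ahler manifold whose tangent bundle $T_F=T_{\tilde{X}/A}|_F$ is strongly psef.

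Granting this structural statement, the remaining steps are clean. By maximality of the irregularity and the spreading-out argument of \cite{DPS94} (Theorem 3.14) — a positive augmented irregularity $\tilde{q}(F)>0$ would, after an \'etale base change of the torus $A$, produce a finite \'etale cover of $\tilde{X}$ of irregularity strictly larger than $q$ — one obtains $\tilde{q}(F)=0$ for the very general fibre. Since $T_F$ is strongly psef, Conjecture A (our standing hypothesis) shows that $\pi_1(F)$ has subexponential growth, so Proposition 2 applies and $F$ is rationally connected. This is exactly the point where the projective argument used Lemma 1; everything else is formal. As a consistency check, $F$ is then projective by \cite{Cam} (cf.\ Lemma 2), so Theorem 3.11 of \cite{HIM} applies to $F$ and confirms the picture.

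For the fundamental groups, a rationally connected manifold is simply connected, hence $\pi_1(F)=1$; the homotopy exact sequence of the smooth fibration $F\hookrightarrow \tilde{X}\xrightarrow{a}A$ then gives $\pi_1(\tilde{X})\cong \pi_1(A)=\Z^{2q}$. Finally $\pi_1(\tilde{X})$ has finite index in $\pi_1(X)$; its normal core $N\trianglelefteq \pi_1(X)$ is of finite index and contained in $\pi_1(\tilde{X})\cong \Z^{2q}$, hence itself isomorphic to $\Z^{2q}$, and it exhibits $\pi_1(X)$ as an extension $1\to \Z^{2q}\to \pi_1(X)\to G\to 1$ with $G$ finite; in particular $\pi_1(X)$ is almost abelian. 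For the last assertion, when $-K_X$ is nef the same holds for $-K_{\tilde{X}}$ and for $-K_F=-K_{\tilde{X}}|_F$ (restriction of a nef class), so by \cite{DPS93} all the relevant fundamental groups have subexponential growth unconditionally, and Conjecture A may be dropped in this case.

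The main obstacle is the structural statement of the first paragraph, and specifically the strong pseudo-effectivity of $T_F$. This does not follow formally: $T_F$ sits in $0\to T_F\to T_{\tilde{X}}|_F\to \cO_F^{\oplus q}\to 0$ as a \emph{subbundle} of the strongly psef bundle $T_{\tilde{X}}|_F$, and subbundles of strongly psef bundles need not be strongly psef — the natural rational map $\P(T_{\tilde{X}}|_F)\to \P(T_F)$, defined off the sub-projective-bundle $\P(\cO_F^{\oplus q})$, acquires a correction divisor along this indeterminacy locus that can destroy positivity, so the psef metric on $\cO_{\P(T_{\tilde{X}}|_F)}(1)$ does not simply descend. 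Establishing it, together with the submersion property of $a$, is precisely the compact K\"ahler analogue of Theorem 3.11 of \cite{HIM}, where projectivity entered through \cite{Hor07}, \cite{BDPP} and \cite{GHS}; this is the step that genuinely relies on the geometric situation remaining unchanged from the projective to the K\"ahler case.
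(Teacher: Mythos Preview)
Your overall architecture is the paper's, but you have misdiagnosed the ``main obstacle.'' The structural statement you leave open --- that the Albanese map $a\colon\tilde{X}\to A$ is a submersion and that $T_{\tilde{X}/A}$ (hence $T_F$ for very general $F$) is strongly psef --- is \emph{not} the K\"ahler analogue of the deep input from \cite{HIM}, \cite{Hor07}, \cite{BDPP}, \cite{GHS}; it follows from elementary facts already in the paper and in \cite{Wu20}. For the submersion, apply Lemma~3: any nonzero section of the dual of a strongly psef bundle is nowhere vanishing; thus if some linear combination of a basis of $H^0(\tilde{X},\Omega^1_{\tilde{X}})$ vanished at a point it would vanish identically, contradicting linear independence, so $da$ is everywhere surjective. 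For the relative tangent bundle, use the exact sequence
\[
0\to T_{\tilde{X}/A}\to T_{\tilde{X}}\to a^*T_A\to 0,
\]
where $a^*T_A\cong\cO_{\tilde{X}}^{\oplus q}$ is trivial; Corollary~4 of \cite{Wu20} then yields that the kernel $T_{\tilde{X}/A}$ is strongly psef. Your worry about subbundles is correct in general, but the point is precisely that the quotient here is trivial, which is what Corollary~4 of \cite{Wu20} exploits. Restricting to a very general fibre $F$ not contained in the projection of the non-nef locus of $\cO_{\P(T_{\tilde{X}/A})}(1)$ then gives $T_F$ strongly psef.

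Once this is in place, your remaining steps match the paper's proof: connectedness of the fibres via the universal property of the Albanese (Proposition~3.9 of \cite{DPS94}), $\tilde q(F)=0$ via Proposition~3.12 of \cite{DPS94}, rational connectedness of $F$ via Proposition~2 (using Conjecture~A for $\pi_1(F)$), and the homotopy exact sequence plus the normal-core argument for $\pi_1(X)$. The paper phrases this as an induction on $\dim X$ (the base case $q=0$ being Proposition~2 itself), but the content is the same. The genuinely deep projective input (\cite{BDPP}, \cite{GHS}, \cite{Hor07}) is used only inside Proposition~2/Lemma~1, i.e.\ \emph{after} one knows the fibre is projective; it plays no role in establishing the fibration structure.
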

\begin{proof}
It is done by induction on the dimension of $X$.
The proof is essentially parallel to Theorem 3.14 and 3.15 of \cite{DPS94}.
For the convenience of the reader, we give the proof here.

Suppose that the result is known in dimension $< n$.
The Albanese morphism is a submersion since if some non-trivial linear combination of the basis of $H^0(\tilde{X}, \Omega^1_{\title{X}})$ vanishes at some point, by Lemma 3, it will vanish identically.
It contradicts the fact that it is a basis.
The relative tangent bundle sequence
$$0 \to T_{\tilde{X}/\Alb(\tilde{X})} \to T_{\tilde{X}} \to \alpha^* T_{\Alb(\tilde{X})} \to 0$$
with Albanese morphism $\alpha: \tilde{X} \to \Alb(\tilde{X})$
in which $\alpha^* T_{\Alb(\tilde{X})}$ is trivial shows by Corollary 4 of \cite{Wu20} that $T_{\tilde{X}/\Alb(\tilde{X})}$ is strongly psef.

The very general fibre of $\alpha$ not contained in the projection of the non-nef locus of $\cO(1)$ over $\P(T_{\tilde{X}/\Alb(\tilde{X})})$ 
has also strongly psef tangent bundle.
By the universal property of Albanese morphism, the fibres are also connected (cf. Proposition 3.9 of \cite{DPS94}).
If the dimension of a very general fibre $F$ is $n$,
the manifold $X$ satisfies that $\tilde{q}(X)=0$.
By Proposition 2, $X$ is a rational connected projective manifold.
Then by Proposition 1 (the projective case), we conclude the fundamental group statement. 

Otherwise, by the induction hypothesis,  the fundamental group of a very general fibre $F$ contains an abelian subgroup of finite index.
By proposition 3.12 of \cite{DPS94},
$\tilde{q}(F)=0$. Thus $F$ is a rational connected projective manifold.
The homotopy exact sequence of the Albanese fibration shows that
$$\pi_1(\tilde{X}) \cong \pi_1 (A( \tilde{X}))
\cong \Z^{2q} .$$
 There is a composition of finite étale covers $\tilde{\tilde{X}} \to \tilde{X} \to X$ such that $\pi_1(\tilde{\tilde{X}})$ is a normal subgroup of finite index of $\pi_1(X)$ by taking the intersection of
all conjugates of $\pi_1(\tilde{X})$ in $\pi_1(X)$.
Again $ \pi_1(\tilde{\tilde{X}})
\cong \Z^{2q}$ as a subgroup of finite index in
$\pi_1 (X) \cong \Z^{2q}$.
Since the composition of the finite \'etale covers is Galois, it induces an extension of $\pi_1(X)$ by a finite group and $\Z^{2q}$.

The last statement follows easily from the fact that if $-K_X$ is nef, the fundamental group has subexponential growth and that the very general fibre of the Albanese morphism still has a nef anticanonical line bundle.
Notice that in this case, we can prove that the fundamental group is almost abelian, as shown by $\cite{Pau98a}$.
However, the proof is much more involved using the profound results of Cheeger-Colding.
\end{proof}
Notice that the same proof works assuming that the fundamental groups of $X$ and $F$ have subexponential growth instead of assuming that Conjecture A holds for any compact K\"ahler manifold with a strongly psef tangent bundle.
 
It seems that the Albanese morphism is in fact isotrivial up to finite \'etale cover.
However, unlike the nef tangent bundle case, the fibres of the Albanese morphism are not necessarily Fano.
Thus $-K_X$ is not relatively ample to embed the fibres in a numerically flat way in some fixed projective space.
In \cite{HIM}, it is asked whether $-K_X$ is relatively big or not.

In the case of (non-necessarily projective, but K\"ahler) surface, the isotriviality of the Albanese morphism up to a finite \'etale cover can be proven.
Up to a finite \'etale cover, 
we can assume that $X$ attains the maximum irregularity.
We separate the discussion into three cases regarding the dimension of the Albanese torus of the surface.

If the Albanese torus is of dimension 2, then the fibres are of 0 dimensional, which are connected.
Thus up to a finite \'etale cover, the surface is a torus.

If the Albanese torus is of dimension 0, 
as we have seen, the surface is rationally connected.
It is equivalent to saying that the surface is rational for a surface.
This is studied in \cite{HIM}. 
For example, they showed that the blow-up of the Hirzebruch surface along the general three points has a strongly pseudo-effective tangent bundle.
However, it is unclear to give a complete classification.

If the Albanese torus is of dimension 1, then we claim that the surface is a ruled surface over the elliptic curve.
First, we show that the surface is minimal, i.e. it can not be the blow-up of some smooth surface $Y$ at some point $p$.
Assuming this is the case, then we have the following commutative diagram.
$$
\begin{tikzcd}
X \arrow[r,"\pi"] \arrow[d,"\alpha_X"]
& Y \arrow[d,"\alpha_Y"] \\
\Alb(X) \arrow[r,"\varphi"]
& \Alb(Y)
\end{tikzcd}
$$
Since $\pi$ is a blow-up, we have linear isomorphism $\pi^*: H^0(Y, \Omega^1_Y) \to H^0(X, \Omega^1_X)$
which implies that $\varphi: \Alb(X) \to \Alb(Y)$ is an isomorphism.
Since $\alpha_X$ is a submersion, $\alpha_Y$ is also a submersion.
In particular, all the fibres of $\alpha_Y$ are diffeomorphic.
Denote by $F$ the fibre of $\alpha_Y$.
The general fibre of $\alpha_X$ is diffeomorphic to $F$.
The exceptional divisor $E$ is contained in a fibre of $\alpha_X$.
In fact,  the restriction of $\alpha_X$ on the exceptional divisor can be lifted to a holomorphic map to $\C$ and thus, it is a constant map.
This gives a special fibre of $\alpha_X$ topologically as the wedge sum of $E$ and $F$.
The reduced homology group of $H_*(E \vee F)$ is isomorphic to $H_*(F) \oplus H_*(S^2)$.
It contradicts that the fibres of $\alpha_X$ have to be diffeomorphic.

If the surface is not a ruled surface, the line bundle $K_X$ is nef.
$T_X$ is strongly psef implies that $-K_X$ is psef.
In particular, $c_1(X)=0$.
By the Beauville-Bogomolov decomposition theorem, up to finite cover, the surface is either a complex torus of dimension two or a direct sum of the torus of dimension 1.
Both cases contradict the dimension of the Albanese torus.

By Theorem 1.5 of \cite{HIM}, for a ruled surface over the elliptic curve, the tangent bundle is strongly psef.
In fact, we can give a complete list of classifications in this case.
In particular, the Albanese morphism is locally isotrivial up to a finite \'etale cover.

In fact, we can hope for some stronger structural results to be hold. 
Let $X$ be a compact Kähler manifold with a strongly psef tangent bundle.
Then the universal cover of $X$ is hoped to be a product of $\C^q$ and $F$
where $q=\tilde{q}(X)$ and $F$ is a rational connected compact manifold with a strongly psef tangent bundle with a big anticanonical line bundle.
\begin{myrem}
{\em 
Notice that the K\"ahler condition is not necessarily in the definition of strongly psef vector bundle and strongly psef torsion-free sheaf.
The usual algebraic properties of strongly psef vector bundles (resp. torsion-free sheaves) hold over arbitrary compact complex manifolds.
Of course, one main technical tool (Serge currents) is unclear to be held over arbitrary compact complex manifolds by lack of Stokes formula (which is the reason we always suppose the manifold to be Kähler in \cite{Wu20}).
}
\end{myrem}
Thus one can ask whether we have the same list of compact complex manifolds with strongly psef tangent bundles without assuming the Kähler condition.
In general, this is false, as shown in the case of a quasi-homogeneous manifold.
Let us recall some basic definitions.
\begin{mydef}(Quasi-homogeneous manifold)

We fix a complex connected Lie group $G$  acting holomorphically on a compact complex manifold $X$.
$X$ is called quasi-homogeneous if $X$ contains an (in fact unique) open dense orbit.
\end{mydef}
\begin{myex}
{\em 
In this example, we show that a quasi-homogeneous manifold has a strongly psef tangent bundle, and then we give examples of smooth Moishezon quasi-homogeneous manifolds which are not projective.
In particular, these give examples of a non-Kähler manifold with a strongly psef tangent bundle.

In fact, the fact that the tangent bundle is strongly psef is a direct consequence of the well-known fact that the tangent bundle of a quasi-homogeneous manifold is generically globally generated. 
Denote by $G\cdot x_0$ the Zariski dense open orbit of $X$.
The group action induces a morphism of analytic group schemes $G \to \Aut(X)$ whose differential at identity is a morphism $\mathfrak{g} \to H^0(X, T_X)$.
More precisely, the vector field is defined as follows: for any $\xi \in \mathfrak{g}$, 
$$v_\xi (x)= \frac{d}{dt}|_{t=0} \exp(t \xi)\cdot x.$$
Since $G/ G_{x_0} \cong G \cdot x_0$ and $G \cdot x_0$ is an open neighbourhood of $x_0$, $v_\xi(x_0)$ generates any element of $T_{x_0,X }$ for appropriate elements of $\xi \in \mathfrak{g}$.
Since the action of $G$ on $G\cdot x_0$ is transitive, $T_X$ is hence globally generated on $G\cdot x_0$.

Thus the global sections induce a psh function on the total space of $S^m T_X$ for any $m >0$ with analytic singularities.
The natural projection of the singular part is contained in $X \setminus G\cdot x_0$ which concludes that $T_X$ is strongly psef.

Now take $G$ to be $(\C^*)^{\oplus r}$ (i.e. $X$ is a toric variety).
By the work of \cite{Mor96}, we know that for any toric variety, there exists a bimeromorphic model which is projective up to a finite composition of blow-ups of smooth toric centres. 
In particular, any toric variety is Moishezon.

On the other hand, by the work of \cite{Dan78}, a toric variety is projective if and only if its associated fan is strongly polytopal.
It is well known that non strongly polytopal cone exists in dimensions at least 3.
In particular, the corresponding toric variety is not projective.
We can even choose the fan such that the corresponding toric variety is non-singular. This only requires that any face $\sigma$ is non-singular in the following sense:
there exist a $\Z-$basis of the fan $\{n_1, \cdots, n_r\}$ and $s \leq r$ such that $\sigma=\R_{\geq 0} n_1 + \cdots + \R_{\geq 0} n_s$. 
 It gives an example of a smooth Moishezon toric variety $X$, which is not projective.

Notice by theorem 4.1 of \cite{DPS94}, the tangent bundle of such $X$ is not nef since a Moishezon manifold with nef tangent bundle is necessarily projective.

}
\end{myex}
Notice that such manifolds are of dimension at least 3.
Let us consider the case of compact surface $X$ with a strongly psef tangent bundle.
As shown in the example of the blow-up of $\P^2$, it seems to be very difficult in general.
Let us consider the case of minimal surface.
\begin{myex}(minimal surface)
{\em 

First, notice that if $\kappa(X) \geq 0$, the minimal surface $X$ has a nef canonical line bundle.
If the tangent bundle of $X$ is strongly psef, which implies in particular that $-K_X$ is psef, $c_1(X)$ hence vanishes.
If the surface is Kähler, by the main theorem of \cite{Wu20}, the tangent bundle is in fact nef.
Thus we have a complete list by the work of \cite{DPS94}.

If the surface is not K\"ahler, then we know that $X$ is either a Kodaira surface or a properly elliptic surface by the classification of minimal surfaces.
By \cite{DPS94}, the first case gives an example of a surface with a strongly psef (in fact nef) tangent bundle.
Let $X$ be a properly elliptic surface with fibration $\pi: X \to C$.
We prove by contradiction that its tangent bundle is not strongly psef.
Note that the genus of $C$ is 1.
Otherwise, $H^0(C,\Omega^1_C)$ has a section with zeros which pulls back to a section with zeros on $X$.
Contradiction with the fact that $T_X$ is strongly psef.
By theorem 15.4 Chap. III of \cite{BPV}, there must exist a singular fibre for $\pi$.
Otherwise, $\pi$ is locally trivial and we have an exact sequence of vector bundles
$$0 \to T_{X/C} \to T_X \to \pi^* T_C \to 0.$$
Since $c_1(T_{X/C})=c_1(X)=0$, $T_X$ is nef.
By the result of \cite{DPS94}, $X$ should be a Kodaira surface.
Let $t \in C$ be a critical point of $X$.
Then $\pi$ is not a submersion near $t$ and there exists $x$ such that $d\pi(x)=0$.
Let $dz$ be a global section of $H^0(C, \Omega^1_C)$.
Then $\pi^* dz$ has a zero at $x$, which contradicts the fact that $T_X$ is strongly psef.

Now consider the case $\kappa(X)=- \infty$.
By the above discussion, it is enough to consider the non-projective case.
By the classification of minimal surface, it is enough to consider the surface of class $VII_0$, which is classified by the work of \cite{Bo82} and \cite{LYZ90}.
There are three cases: torus, Hopf surface or Inoue surface.
It is checked in \cite{DPS94}, the first two cases give surfaces with strongly psef (in fact nef)  tangent bundle.

For an Inoue surface $X$, it is checked in \cite{DPS94} that its tangent bundle is not nef.
The same argument works for the strongly psef case. We sketch the proof and refer to \cite{DPS94} for more details.
The Inoue surface associated with a matrix in $SL(3,\Z)$ and the Inoue surface of type $S _{N,p,q,r;t}^+$ has a quotient line bundle $Q$ of the tangent bundle, which is semi negative but not flat.
In particular,
$\int_X c_1(Q ) \wedge \omega <0$
where $\omega$ is a Gauduchon metric on $X$.
However, a quotient bundle of a strongly psef vector bundle is also strongly psef.
Contradiction.
The Inoue surface of type $S _{N,p,q,r;t}^-$
has a double \'etale cover of type $S _{N,p,q,r;t}^+$.
Its tangent bundle is not strongly psef since strongly psefness is preserved under finite \'etale cover.
}
\end{myex}
Note that any compact surface with a strongly tangent bundle has to be blow-up of a minimal compact surface with a strongly tangent bundle by the following easy remark.
\begin{myrem}
{\em 
Let $\pi: \tilde{X} \to X$ be a bimeromorphic morphism between smooth compact manifolds.
If $T_{\tilde{X}}$ is strongly psef, then $T_X$ is also strongly psef.
The reason is as follows.
The image of $T_{\tilde{X}} \to \pi^* T_X$ is a torsion-free sheaf which is strongly psef by (1) Proposition 4 of \cite{Wu20}.
On the other hand, the inclusion of the image into $\pi^* T_X$ is generically isomorphism since $\pi$ is bimeromorphic.
Thus by Proposition 5 of \cite{Wu20}, $\pi^* T_X$ is strongly psef.
By theorem 1 of \cite{Wu20}, $T_X$ is hence strongly psef.
}
\end{myrem}
\begin{myrem}
{\em 
By the fundamental work of Michel Brion \cite{Bri10}, we can show that the conjectural structure of the Albanese morphism of a compact projective manifold with a strongly psef tangent bundle holds for a quasi-homogeneous projective manifold $X$ with a connected algebraic group action $G$.
First of all, the Albanese morphism of $X$ can be written as
$$\alpha: X \to G/H$$
where $H$ is a linear subgroup such that $G/H$ is an abelian variety.
($H$ is unique by Chevalley's structure theorem.)
Moreover, Brion showed that one has a $G-$equivariant isomorphism $X \cong G \times^H F$ where the right-hand side denotes the homogeneous fibre bundle over
$G/H$ associated with the scheme-theoretic fibre $F$ of $\alpha$ at the base point, which is a normal variety.
Since $X$ is smooth, $F$ is also smooth.
This shows in particular that the Albanese morphism is locally trivial.

By proposition 3.12 (iii) of \cite{DPS94}, the fibre $F$ satisfies $\tilde{q}(F)=0$.
It is conjectured in \cite{HIM} that a projective manifold $F$ with a strongly psef tangent bundle and $\tilde{q}(F)=0$ has a big anticanonical line bundle.
It can be shown that this conjecture holds in the case of projective quasi-homogeneous manifolds.
With the notations above taking $F$ as $X$, $H=G$, which means that $F$ is a projective manifold, quasi-homogeneous under a linear algebraic
group.
Then by the work of \cite{FZ13}, $-K_F$ is big.

It is communicated to the author by Michel Brion that the stronger version of structure conjecture for the compact Kähler manifolds with a strongly psef tangent bundle holds for a smooth projective quasi-homogeneous manifold $X$ with the action of a connected algebraic group $G$.
The reason is as follows.
With the same notation above, we can assume $H$ to be connected.
Otherwise, consider $Y$ after the base change
$$
\begin{tikzcd}
Y \arrow[r] \arrow[d] & G/H^0 \arrow[d] \\
X \arrow[r]           & G/H            
\end{tikzcd}
$$
where $H^0$ is the connected component of the neutral element.
Since $G/H^0 \to G/H$ is finite \'etale, $Y$ is still smooth projective, and one has that the universal cover of $X$ is the same as the universal cover of $Y$.
By the affinisation theorem, a distinguished subgroup $K$ exists such that $G/K$ is a linear algebraic group.
$K$ is commutative and is central in $G$, which implies that $HK$ is a distinguished subgroup of $G$.
$G/HK$ as a linear algebraic group $G/K$'s quotient is also a linear algebraic group.
On the other hand, $G/HK$ as a quotient of the abelian variety $G/H$ is also an abelian variety.
In particular, $G/HK$ has to be trivial.
In other words, $G=HK$.
In particular, one has surjection
$$K \to K/K \cap H \cong HK /H \cong G/H.$$
Since $K$ is commutative and connected, the universal cover $\tilde{K}$ of $K$ is isomorphic to $\C^r$ for some $r \in \N$ with $\pi: \tilde{K} \to K$.
As above, considering some finite \'etale cover, we can assume that $H \cap K$ is connected.
Thus $\pi^{-1}(K \cap H)$ is a connected closed subgroup of $\C^r$ which is hence isomorphic $\C^s$ for some $s \leq r$.
There exists an isomorphism $\C^r  \cong \C^s \times \C^{r-s}$ as algebraic group.
Thus we have
$$\tilde{X} \cong \tilde{K} \times^{\pi^{-1}(K \cap H)} F \cong \C^{r-s} \times F$$
since $X \cong G \times^H F \cong K \times^{H \cap K} F.$ 
}
\end{myrem}
\begin{myrem}
{\em 
By the seminal work of \cite{Fuj78}, we can generalise the results in the previous remark from the projective case to the compact K\"ahler case.
More precisely, as in \cite{Fuj78}, we call a compact K\"ahler manifold $X$ almost homogeneous if $G:=\Aut_0(X)$ has an open orbit in $X$.
Here $\Aut(X)$ is the automorphism group of $X$ and  $\Aut_0(X)$ is the identity connected component of $\Aut(X)$. 
It is equivalent to our previous definition with more structures on $\Aut_0(X)$.
$\Aut(X)$ has a natural complex Lie group structure and 
$\Aut_0(X)$ also has a natural meromorphic group structure. 
Furthermore, there exists a unique meromorphic subgroup $L(X)$ of $\Aut_0(X)$, which is meromorphically isomorphic to a linear algebraic group and such that the quotient denoted by $T(X)$ is a complex torus.
This can be seen as a version of Chevalley's structure theorem for meromorphic groups in the sense of Fujiki.

Applying Lemma 6.1 in \cite{Fuj78} to $X$ a compact K\"ahler almost homogeneous manifold implies that 
the Albanese morphism of $X$ is a locally trivial submersion.
In fact, 
the results of Fujiki imply that
the natural morphism $G \to \Alb(X) $ is surjective.
In other words, the action of $G$ on $\Alb(X)$ is transitive.
Let $x \in X$ be a point such that $G \cdot x \simeq G/I$ is the open dense orbit in $X$.
Let $H$ be the isotopy group of $G$ at $\alpha(x)$ (such that $\Alb(X) \simeq G/H$).
Since $\alpha$ is $G-$equivariant, $I \subset H$.
Let $F$ be the fibre of $\alpha$ at $\alpha(x)$ (i.e. the identity element in $G/H$ under the identification).
Then we have a morphism $G \times F \to X$ by sending $(g,x) $ to $g \cdot x$ which is surjective since the preimage of $x$ can be taken as $(g, g^{-1}\cdot x)$ where $g \in G$ such that $\alpha(x)=gH$.
The group $H$ acts on $G \times F$ by $h \cdot (g,x):=(g h^{-1}, h \cdot x)$.
Thus we have a commutative diagram
$$
\begin{tikzcd}
G \times^H F \arrow[r, "\simeq"] \arrow[d] & X \arrow[d, "\alpha"] \\
G/H \arrow[r, "\simeq"]           & \Alb(X)           
\end{tikzcd}
$$
which implies local isotriviality.

The fact that $-K_F$ is big comes from the following arguments.
Recall in general that the dimension of $T(X)$ is always less than the dimension of $\Alb(X)$ even without the quasi-homogeneous condition (Theorem 5.5 of \cite{Fuj78}).
Since $\tilde{q}(F)=0$, up to a finite \'etale cover, $T(F)$ is a point.
In particular, up to a finite \'etale cover, $\Aut_0(F)$ is an algebraic group.
Then the work of \cite{FZ13} concludes that $-K_F$ is big up to a finite \'etale cover, which implies the bigness of $-K_F$.

The result on the universal cover follows from the following arguments.
In our case, $T(X) \to \Alb(X)$ is a finite \'etale cover by Theorem 5.5 of \cite{Fuj78}.
Thus up to a finite \'etale cover, to consider the universal cover of $X$, we can assume that $G/H \simeq T(X)$.
More precisely, we consider the following base change
$$
\begin{tikzcd}
G \times^{L(X)} F \arrow[r] \arrow[d] & G/L(X) \simeq T(X) \arrow[d] \\
G \times^H F \arrow[r]           & G/H.            
\end{tikzcd}
$$ 
In particular, we can assume that $H$ is a linear algebraic group without loss of generality.
The above arguments of Brion work identically in the compact K\"ahler case with slight modification.
Instead of the affinisation of the algebraic group, we need Theorem 3.5 of \cite{Fuj78} for $G$.
More precisely, there exists a unique pair $(G_1,\rho)$ up to isomorphism, which consists of a linear algebraic group $G_1$ and a surjective meromorphic homomorphism $\rho: G \to G_1$ such that the kernel of $\rho $ is connected and is contained in the centre of $G$.
In particular, $K$ can be taken to be the kernel of $\rho$.
Of course, moreover, all abelian varieties have to be changed to be complex tori.
}
\end{myrem}
\begin{myrem}
{\em
We can even generalise the results in the previous remarks to compact manifold of class $\cC$ (i.e. compact manifold bimeromorphic to K\"ahler manifold).
In fact, the results of Fujiki on meromorphic groups used in the previous remark hold for a compact manifold $X$ of class $\cC$ as already stated in \cite{Fuj78}.

The non-trivial point is that we cannot apply Proposition 3.12 of \cite{DPS94} on the augmented irregularity for a compact manifold of class $\cC$.
The arguments need to be modified to study Albanese morphism's fibre.

The same arguments show that Albanese morphism is a locally trivial submersion.
As observed in Theorem 1 of \cite{JM22}, there exists an $\Aut_0(X)-$equivariant modification $\pi: \tilde{X} \to X$ obtained by equivariant blow-ups of smooth centres such that $\tilde{X}$ is K\"ahler.
Since every orbit is open in its closure, the induced $\Aut_0(X)-$action on $\tilde{X}$ has an open dense orbit.
In other words, $\tilde{X}$ is $\Aut_0(X)-$quasi-homogeneous.
Note that by Lemma 2.4 (4) of \cite{Fuj78}, the image of $\Aut_0(X) \to \Aut_{0}(\tilde{X})$ has a meromorphic structure.
Note also that the Albanese torus of $X$ and $\tilde{X}$ are isomorphic.

By the previous remark, the fibre $\tilde{F}$ of the Albanese morphism of $\tilde{X}$ has big anticanonical divisor with $\tilde{q}(\tilde{F})=0$.
The restricting of $\pi$ on $\tilde{F}$ gives a modification of the fibre $F$ of the Albanese morphism of $X$.
Thus $-K_{\tilde{F}}$ is big implies that $-K_F$ is big.
On the other hand, $F$ is rationally connected as $\tilde{F}$.
Thus the fundamental group of $F$ and $\tilde{F}$ is trivial, which implies that
$\tilde{q}(F)=q(F)=q(\tilde{F})=0$.
The rest arguments on the universal cover of $X$ are the same.

Note that the example of Hopf surface shows that Albanese morphism's fibre of an arbitrary homogeneous compact complex manifold has not necessarily a big anticanonical line bundle.
}
\end{myrem}

\begin{myex}
{\em 
This example shows that a projective manifold with a psef anticanonical line bundle may have a fundamental group with exponential growth.

Take $Y$ to be a complex curve of genus bigger than 2. 
Fix $A$ an ample divisor on $Y$.
Define $E= A^{\otimes p} \oplus A^{ \otimes -q }$ where $p, q \in \N$ are suitably chosen.
Denote $X = \P(E)$ with $\pi: X \to Y$. 
For well-chosen $p,q$, the anticanonical line bundle $-K_X$ is big but not nef in codimension 1 (for more details cf. \cite{Wu19}). 
Since $\pi$ is a fibration with projective spaces as fibres,
the long exact sequence associated to the fibration shows
$$\pi_1(\P^1)=\{e\} \to \pi_1(X) \xrightarrow{\simeq} \pi_1(Y) \to \pi_0(\P^1)=\{e\}.$$
In particular, $\pi_1(X)$ has a non-abelian free subgroup.
}
\end{myex}
\section{Characterisation of projective space}
In this section, following the idea of Iwai \cite{Iwe}, we give new proof of the characterisation of projective space.

First, we introduce the notion of the strong bigness of a vector bundle.
\begin{mydef}
Let $E$ be a vector bundle over a compact K\"ahler manifold $X$. 
$E$ is said to be strongly big if there exists a K\"ahler current $T$ in the class of $c_1(\cO_{\P(E)}(1))$ such that the projection of the singular part $E_{+}(T)=\{\nu(T,x) >0\}$ does not cover $X$.
Here $\nu(T,x)$ is the Lelong number of $T$ at $x \in \P(E)$.
\end{mydef}
\begin{myrem}
By Demailly's regularisation theorem,
in the definition a K\"ahler current can be changed by a K\"ahler current with analytic singularities.
\end{myrem}
When the manifold is projective, it is the same as the so-called big vector bundle defined in \cite{Iwe}.
\begin{mylem}
Let $E$ be a holomorphic vector bundle over a projective manifold $X$.
$E$ is strongly big iff there exist an ample line bundle $A$ and $k \in \N >0$ such that $\Sym^k(E) \otimes A^{-1}$ is positive in the sense of Nakayama at some point $x$.
\end{mylem}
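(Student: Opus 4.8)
The plan is to pass back and forth between $X$ and the projectivised bundle $\pi\colon\P(E)\to X$ via the identification $H^0(X,\Sym^k E\otimes A^{-1})=H^0(\P(E),\cO_{\P(E)}(k)\otimes\pi^*A^{-1})$. Under this identification, positivity of $\Sym^k E\otimes A^{-1}$ in the sense of Nakayama at $x$ (the evaluation map being surjective onto the fibre at $x$, i.e.\ the bundle being globally generated at $x$) translates into the existence of sections $s_0,\dots,s_N$ of $\cO_{\P(E)}(k)\otimes\pi^*A^{-1}$ having no common zero on the fibre $\pi^{-1}(x)\cong\P^{r-1}$, where $r=\rank E$. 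I will prove the two implications separately, the algebraic notion always being read off as global generation along a fibre.

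For the implication from Nakayama positivity to strong bigness, I would take such sections $s_0,\dots,s_N$ and let the weight $\log\sum_j|s_j|^2$ define a singular metric on $\cO_{\P(E)}(k)\otimes\pi^*A^{-1}$ with semipositive curvature current, smooth (no Lelong numbers) over $\pi^{-1}(U)$ for a Zariski open $U\ni x$. Since $A$ is ample, this produces a closed positive current $S\in c_1(\cO_{\P(E)}(1))$ with $S\ge\tfrac1k\pi^*\omega_A$ whose singular locus lies over $X\setminus U$. It then remains to upgrade semipositivity along the fibres to strict positivity: choosing $m_0$ so that $\cO_{\P(E)}(1)\otimes\pi^*A^{m_0}$ is ample (possible since $\cO_{\P(E)}(1)$ is $\pi$-ample), with curvature a K\"ahler form $\omega_B$, the interpolation $(1-\varepsilon)S+\varepsilon(\omega_B-m_0\pi^*\omega_A)$ has curvature $\ge\varepsilon\omega_B$ for $\varepsilon$ small, hence is a K\"ahler current in $c_1(\cO_{\P(E)}(1))$ whose singular set still projects into $X\setminus U\neq X$. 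This is exactly strong bigness.

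For the converse, by the previous remark (Demailly regularisation) I may take the K\"ahler current $T\in c_1(\cO_{\P(E)}(1))$ to have analytic singularities, with $T\ge\varepsilon_0\omega_0$ and $E_{+}(T)$ not surjecting onto $X$; I pick $x$ with $\pi^{-1}(x)\cap E_{+}(T)=\emptyset$, so the weight of $T$ is smooth with strictly positive curvature near the fibre $Z:=\pi^{-1}(x)$. Using $K_{\P(E)}=\cO_{\P(E)}(-r)\otimes\pi^*(\det E\otimes K_X)$, I write $\cO_{\P(E)}(k)\otimes\pi^*A^{-1}=K_{\P(E)}\otimes G$ with $G=\cO_{\P(E)}(k+r)\otimes\pi^*(\det E^{-1}\otimes K_X^{-1}\otimes A^{-1})$, and equip $G$ with $h_T^{k+r}$ twisted by a fixed smooth metric on the pullback factor; for $k$ large the curvature dominates a fixed K\"ahler form on $\P(E)$ and the multiplier ideal is trivial near $Z$. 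I would then modify this weight near $Z$ by a term with logarithmic poles along $Z$ so that its multiplier ideal equals $\cI_Z$ there while keeping the curvature strictly positive, apply Nadel vanishing to obtain $H^1(\P(E),K_{\P(E)}\otimes G\otimes\cI_Z)=0$, and conclude that global sections of $\cO_{\P(E)}(k)\otimes\pi^*A^{-1}$ surject onto $H^0(Z,(\cO_{\P(E)}(k)\otimes\pi^*A^{-1})|_Z)=H^0(\P^{r-1},\cO(k))$. Since the latter is globally generated, this yields sections with no common zero on $Z$, and pushing forward by $\pi$ recovers the global generation of $\Sym^k E\otimes A^{-1}$ at $x$.

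The main obstacle is this last construction: producing, along the positive-dimensional fibre $Z\cong\P^{r-1}$ rather than at a single point, an auxiliary singular weight whose multiplier ideal is exactly $\cI_Z$ near $Z$ while preserving strict positivity of the curvature. This forces careful bookkeeping between the strictly positive fibre directions of $\omega_0$ and the merely semipositive pulled-back base directions $\pi^*\omega_A$, and the choice of $k$ large enough to absorb all the fixed twists in $\pi^*(\det E^{-1}\otimes K_X^{-1}\otimes A^{-1})$. This degenerate comparison of base versus fibre directions is precisely the recurring technical point, and it appears (in milder form, through the interpolation step) in the first implication as well.
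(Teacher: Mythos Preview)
Your approach is essentially the one the paper takes (both follow the scheme of Proposition~7.2 in \cite{BDPP}: translate between sections on $X$ and singular metrics on $\cO_{\P(E)}(1)$, then use $L^2$ methods). Two points need correction, however.

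First, ``positive in the sense of Nakayama at $x$'' is \emph{not} global generation at $x$; it is Viehweg--Nakayama weak positivity, i.e.\ for every ample $H$ and every $a>0$ there is $b>0$ with $\Sym^{ab}(\,\cdot\,)\otimes H^b$ globally generated at $x$. This is precisely why the paper's forward direction passes through sections of $\Sym^p\bigl(\Sym^m(\Sym^k E\otimes A^{-1})\otimes A\bigr)$ rather than of $\Sym^k E\otimes A^{-1}$ itself. Because the statement already quantifies existentially over $A$ and $k$, and $\Sym^{mk}E$ is a direct summand of $\Sym^m\Sym^k E$ in characteristic zero, the two conditions are a posteriori equivalent, so your argument can be repaired; but as written, your hypothesis in the forward implication is strictly stronger than what the lemma gives you.

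Second, in the converse, Nadel vanishing kills $H^1$ twisted by the multiplier ideal $\cI(h)$ of your weight, not by $\cI_Z$. Since $h_T^{k+r}$ is singular along $E_+(T)$, one has $\cI(h)\subsetneq\cI_Z$ there, so your claimed vanishing $H^1(\P(E),K_{\P(E)}\otimes G\otimes\cI_Z)=0$ does not follow. The fix is easy because $Z\cap E_+(T)=\emptyset$ by your choice of $x$: the sheaf $\cO/\cI(h)$ then splits as $\cO_Z$ plus a piece supported in $E_+(T)$, and the Nadel surjection $H^0(L)\twoheadrightarrow H^0(L\otimes\cO/\cI(h))$ still surjects onto the summand $H^0(Z,L|_Z)$. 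The paper expresses the same thing informally: the $L^2$ sections ``possibly still have to vanish along the poles of the metric, but they are unrestricted on the fibres \dots\ which do not meet the singular part''. By contrast, your flagged ``main obstacle'' (a weight with log poles along the fibre $Z$) is routine: just pull back $n\log|z-x|$ from a chart on $X$; its curvature is $\ge -C\,\pi^*\omega$ and is absorbed by taking $k$ large.
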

\begin{proof}
The proof is similar to the proof of Proposition 7.2 of \cite{BDPP}.
For the convenience of the reader, we give the outline of the proof.

If there exists $k \in \N >0$ such that $\Sym^k(E) \otimes A^{-1}$ is positive in the sense of Nakayama at some point $x$.
Then there exist $p,m \geq 2$ such that the global sections of 
$\Sym^p((\Sym^m (\Sym^k(E) \otimes A^{-1})) \otimes A)$
is globally generated at $x$.
In particular, there exists a non-empty Zariski open set $U_{m,p,k}$ such that these global sections globally generate any point in $U_{m,p,k}$.
They can be used to define a singular hermitian metric $h_{m,p,k}$ on $\cO_{\P(E)}(1)$ which has poles contained in $\pi^{-1}(X \setminus U_{m,p,k})$ and whose curvature form satisfies
$$\Theta_{h_{m,p,k}}(\cO(1)) \geq \frac{m-1}{mk} \pi^* \Theta(A).$$
Conversely, without loss of generality, we can assume that $T$ has analytic singularities.
Up to a multiple, identify $T$ as a singular metric on $\cO_{\P(E)}(1)$.
In particular, for $k$ large enough, we have a curvature estimate
$$\Theta_{}(\cO(k) \otimes \pi^*{A}^{-1}) \geq \frac{1}{2} \pi^* \Theta(A)$$
induced from this singular metric smooth on a non-empty Zariski open set.
After multiplying $\Theta(A)$ by a sufficiently large integer $p$ to compensate
the curvature of $-K_X$,
by $L^2-$estimate, we can construct global sections of $\Sym^p(\Sym^k E \otimes A^{-1})$.
The sections possibly still have to vanish along the poles of the metric, but they are unrestricted on the fibres of $\P(\Sym^k E) \to X$ which do not meet the singular part of the metric.
In particular, we prove that $\Sym^p(\Sym^k E \otimes A^{-1})$ is generically globally generated, which implies that $S^k E \otimes A^{-1}$ is positive in the sense of Nakayama.
\end{proof}
In fact, the definition of a strongly big vector bundle implies the projectivity of the manifold.
In particular, our definition is precisely equivalent to that of Iwai.
(Of course, it is interesting to know whether the same holds on a singular space.
Although the $L^2-$method is insensitive to the singularity, we do not have the regularisation to control the analyticity of the singular part.)
\begin{mylem}
If $X$ admits a strongly big vector bundle $E$ of rank $r$, $X$ is projective.
\end{mylem}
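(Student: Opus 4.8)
The plan is to reduce the projectivity of $X$ to its being Moishezon, and to obtain the latter from the bigness of $\cO_{\P(E)}(1)$ that is encoded in the K\"ahler current $T$. Since $X$ is compact K\"ahler, a classical theorem of Moishezon asserts that a compact K\"ahler manifold which is Moishezon is automatically projective; hence it suffices to prove that $X$ is Moishezon.

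First I would work on the total space $P:=\P(E)$. As a projectivised bundle over the compact K\"ahler manifold $X$, $P$ is again compact K\"ahler: for $C\gg 0$ the combination $C\,\pi^*\omega + i\Theta$ of the pullback of a K\"ahler form on $X$ with a fibrewise-positive curvature form of the relatively ample bundle $\cO_P(1)$ is a K\"ahler form $\omega_P$ on $P$. By Demailly's regularisation theorem (as recorded in the remark following Definition B) the K\"ahler current $T\in c_1(\cO_P(1))$ may be taken with analytic singularities while keeping $T\ge \varepsilon\,\omega_P$ for some $\varepsilon>0$; note that for this step only the K\"ahler-current property is used, not the hypothesis $\pi(E_{+}(T))\ne X$. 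Consequently $c_1(\cO_P(1))$ is a big $(1,1)$-class, and by the holomorphic Morse inequalities (Bonavero, Ji--Shiffman, Demailly) the line bundle $\cO_P(1)$ is big. Therefore $P$ carries a big line bundle and is Moishezon.

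It remains to descend Moishezon-ness along $\pi\colon P\to X$. Since $P$ is Moishezon we may replace it by a projective modification, so without loss of generality $P$ is projective with an ample divisor $A$. Intersecting $P$ with $\dim P-\dim X=r-1$ general members of $|A|$ produces a projective subvariety $P_0\subset P$ of dimension $\dim X$; as each fibre $\P^{r-1}$ meets $r-1$ general ample divisors in a nonempty finite set, the restriction $\pi|_{P_0}\colon P_0\to X$ is a generically finite surjection. The induced extension of meromorphic function fields $\mathcal{M}(X)\hookrightarrow \mathcal{M}(P_0)$ is then finite, so both fields have transcendence degree $\dim X$ over $\C$. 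Hence $X$ has algebraic dimension $\dim X$ and is Moishezon; being compact K\"ahler and Moishezon, $X$ is projective.

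The analytic heart of the argument is the passage from the K\"ahler current to a big line bundle on $P$: this is where the regularisation with analytic singularities and the holomorphic Morse inequalities do the real work, and it is the step I would write out most carefully. The descent of Moishezon-ness to the base is, by contrast, a routine function-field computation once $P$ has been replaced by a projective model. I would also remark explicitly that the extra condition $\pi(E_{+}(T))\ne X$ built into strong bigness plays no role here and is reserved for the finer conclusion of Theorem B.
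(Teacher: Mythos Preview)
Your proof is correct, but it follows a different route from the paper's. The paper argues directly on $X$ via Segre currents: starting from the K\"ahler current $T\ge\pi^*\omega$ on $\P(E)$, one forms the pushforward $\pi_*(T^r)$ (well-defined by the Segre current construction of \cite{Wu20}) and checks that it is a K\"ahler current in $c_1(\det E)$; thus $\det E$ itself is big on $X$, and $X$ is Moishezon, hence projective. Your argument instead first shows that $\P(E)$ is Moishezon (from the bigness of $\cO_{\P(E)}(1)$), passes to a projective model, and then descends Moishezon-ness to $X$ by a slicing / function-field argument. This descent step is essentially the content of the paper's subsequent Proposition~3, which the paper proves by pushing forward a power of an ample class in rational Bott--Chern cohomology rather than by cutting with hyperplanes.

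What each approach buys: the paper's Segre-current proof is shorter and pinpoints the big line bundle on $X$ explicitly as $\det E$, which is thematically important since Segre currents are the main tool of the section. On the other hand, your approach avoids the Segre-current machinery entirely and makes transparent your closing observation that the hypothesis $\pi(E_+(T))\ne X$ is not used in this lemma; in the paper's proof that hypothesis is implicitly relevant, since the construction of the product $T^r$ in \cite{Wu20} relies on the singularities of $T$ not projecting onto all of $X$.
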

\begin{proof}
Since $X$ is assumed to be K\"ahler, it is enough to show that $X$ is Moishezon by the existence of a big line bundle.
By the construction of Serge current in \cite{Wu20}, we can define a closed positive current $\pi_*(T^r)$ in the class of $\pi_*(c_1(\cO_{\P(E)}(1))^r)=c_1(\det(E))$.

If $T \geq \pi^* \omega$ where $\omega$ is a K\"ahler form on $X$,
$$\pi_*(T^r)\geq \pi_*(T^{r-1} \wedge \pi^* \omega)=\pi_*(c_1(\cO_{\P(E)}(1), h_\infty)^{r-1} \wedge\pi^* \omega) = \omega$$
by the construction of Serge current where $h_\infty$ is any smooth metric on $\cO_{\P(E)}(1)$.
In other words, $\pi_*(T^r)$ is a K\"ahler current in $c_1(\det(E))$ which implies that $\det(E)$ is a big line bundle over $X$.
\end{proof}
In fact, we have the following more general fact.
\begin{myprop}
Let $\pi: X \to Y$ be a submersion of relative dimension $r$ between compact complex manifolds.
If $X$ is projective, so is $Y$.
\end{myprop}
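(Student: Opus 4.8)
The plan is to manufacture an integral K\"ahler class on $Y$ directly from the ample class of $X$ by integration along the fibres, and then to invoke Kodaira's embedding theorem; there is no need to establish the Moishezon property separately. Since $X$ is projective it carries an ample line bundle $L$ together with a K\"ahler form $\omega$ with $[\omega]=c_1(L)\in H^2(X,\Z)$. Because $\pi$ is proper (its source is compact) and a holomorphic submersion, it is surjective onto the connected manifold $Y$ and admits the fibre-integration operator $\pi_*$, which lowers the degree of a form by $2r$. I would set $\beta:=\pi_*(\omega^{r+1})$. As the fibres are complex $r$-folds, $\pi_*$ sends $(p,q)$-forms to $(p-r,q-r)$-forms, so $\beta$ is of type $(1,1)$; it is closed because $\pi_*$ commutes with $d$ and $\omega^{r+1}$ is closed, and it is real since $\omega$ is.

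The cohomology class of $\beta$ is the image of $[\omega]^{r+1}=c_1(L)^{r+1}$ under the Gysin map $\pi_!\colon H^{2r+2}(X,\Z)\to H^2(Y,\Z)$ attached to the oriented smooth fibre bundle $\pi$ (Ehresmann's theorem, the fibres being canonically oriented complex manifolds). Since this map preserves integral cohomology and $c_1(L)^{r+1}$ is integral, the class $[\beta]=\pi_!\,c_1(L)^{r+1}$ lies in the image of $H^2(Y,\Z)\to H^2(Y,\R)$, i.e.\ it is an integral class.

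The crucial and only delicate point — the one I expect to be the \emph{main obstacle} to verify carefully — is that $\beta$ is strictly positive. Using the metric $\omega$, decompose $T_X=V\oplus H$ with $V=T_{X/Y}$ the vertical bundle and $H$ its $\omega$-orthogonal complement, so that $d\pi|_H\colon H\to\pi^*T_Y$ is an isomorphism and $\omega$ carries no mixed $V$--$H$ component. Fix $y\in Y$ and $0\neq\xi\in T_{Y,y}$, and let $\tilde\xi$ be its horizontal lift along the fibre $F_y=\pi^{-1}(y)$. Contracting $\omega^{r+1}$ with $\tilde\xi$ and $\overline{\tilde\xi}$ and restricting to $F_y$, every term retaining a factor $\iota_{\tilde\xi}\omega$ dies, because the restriction of the $1$-form $\iota_{\tilde\xi}\omega$ to $F_y$ pairs $\tilde\xi\in H$ against vertical vectors and hence vanishes by the orthogonality $H\perp_\omega V$. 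What survives is, up to a positive universal constant, the fibre integral of $\omega(\tilde\xi,\overline{\tilde\xi})\,\omega^r|_{F_y}$. Here $\omega(\tilde\xi,\overline{\tilde\xi})>0$ since $\omega>0$ and $\tilde\xi\neq0$, while $\omega^r|_{F_y}$ is the volume form of the fibre; integrating over the compact fibre yields a strictly positive value. Thus the Hermitian form attached to $\beta$ is positive definite, and $\beta$ is a K\"ahler form on $Y$.

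Consequently $Y$ is a compact K\"ahler manifold endowed with the integral K\"ahler class $[\beta]=\pi_!\,c_1(L)^{r+1}$, and by Kodaira's embedding theorem there is an ample line bundle on $Y$ with first Chern class $[\beta]$; therefore $Y$ is projective. This is the smooth counterpart of the Serge-current computation of the preceding lemma: there a K\"ahler current on $\P(E)$ was pushed forward to realise $\det E$ as a big class on $X$, whereas here the genuine positivity of $\omega$ makes the fibre integral $\pi_*(\omega^{r+1})$ an honest smooth K\"ahler form, and its integrality upgrades bigness to ampleness through Kodaira.
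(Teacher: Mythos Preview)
Your proof is correct and follows essentially the same approach as the paper's: both push forward $\omega^{r+1}$ along the fibres to produce a K\"ahler form on $Y$ with rational/integral cohomology class, and then invoke Kodaira's criterion. The paper tracks the class through Schweitzer's rational Bott--Chern cohomology, whereas you use the integral Gysin map directly and supply the explicit fibrewise positivity computation that the paper leaves as an assertion (``Since $\pi$ is a submersion, $\pi_*(c_1(A,h)^{r+1})$ is a K\"ahler form'').
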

\begin{proof}
Let $A$ be an ample line bundle over $X$ with smooth metric $h$ such that $c_1(A,h)$ is a K\"ahler form on $X$.
Consider the rational Bott-Chern cohomology group defined by Schweitzer \cite{Sch}.
We have the following commuting diagram
$$
\begin{tikzcd}
{H^{1,1}_{BC}(X, \Q)} \arrow[r] \arrow[d] & {H^{1,1}_{BC}(Y, \Q)} \arrow[d] \\
{H^{1,1}_{BC}(X, \R)} \arrow[r]           & {H^{1,1}_{BC}(Y, \R)}          
\end{tikzcd}
$$
where the vertical arrows are the canonical maps and the horizontal map is sending $\alpha$ to $\pi_*(\alpha^{r+1})$ in the corresponding cohomology groups.
Since $\pi$ is a submersion,
$\pi_*(c_1(A,h)^{r+1})$ is a K\"ahler form on $Y$ in the class $H^{1,1}_{BC}(Y, \R)$.
Consider the commuting diagram
$$
\begin{tikzcd}
{H^{1,1}_{BC}(Y, \Q)} \arrow[r] \arrow[d] & {H^{2}(Y, \Q)} \arrow[d] \\
{H^{1,1}_{BC}(Y, \R)} \arrow[r]           & {H^{2}(Y, \R)}          
\end{tikzcd}
$$
which implies that $\pi_*(c_1(A,h)^{r+1})$ is contained in $H^{1,1}_{BC}(Y, \R) \cap H^2(Y, \Q)$.
Thus $Y$ is projective by Kodaira's criterion.  
\end{proof}
\begin{myrem}
{\em 
The condition that $E$ is strongly big is stronger than the condition that $\cO_{\P(E)}(1)$ is a big line bundle and that $E$ is a strongly psef vector bundle.

A typical example is the following.
Consider $E = \cO_X \oplus A$ with an ample line bundle $A$ over a projective manifold $X$.
As a direct sum of strongly psef vector bundles, $E$ is strongly psef.
Classically, once one component in the direct sum $E'=\oplus L_i$ is big, $\cO_{\P(E')}(1)$ is a big line bundle.

However, $E$ is not strongly big.
Otherwise, there exists $k >0$ such that $\Sym^k E \otimes A^{-1}$ is positive in the sense of Nakayama.
On the other hand,
$$\Sym^k E \otimes A^{-1}=\oplus_{i=0}^k A^{i-1}$$
which contains a strictly negative component $A^{-1}$.
}
\end{myrem}
Now, we give another proof of Corollary 1.3 of \cite{Iwe} using Segre current.
This result was also proved by Fulger and Murayama \cite{FM} by using Seshadri constants of vector bundles.

Let us first recall the essential ingredient of the proof due to Corollary 0.4 of \cite{CMSB}.
\begin{mythm}(Length condition on rational curves with base point)

Let $X$ be a  complex projective manifold of dimension $n$ and $x_0 \in X$  be a general point.
$X$ is biholomorphic to $\P^n$ if and only if
$X$ is uniruled and $( C, -K_X) \geq n+ 1$ for every rational curve $C$ containing the prescribed general point $x_0$.
\end{mythm}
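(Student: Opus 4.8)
The plan is to prove the two implications separately, with essentially all the weight on the ``if'' direction. The ``only if'' direction is immediate: on $\P^n$ one has $-K_{\P^n} = \cO(n+1)$, so any rational curve $C \subset \P^n$ satisfies $(C, -K_{\P^n}) = (n+1)\deg C \geq n+1$, and $\P^n$ is visibly uniruled by lines. For the converse I would run the standard Mori-theoretic machine of minimal rational curves.

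First I would fix a dominating family $\mathcal{K}$ of rational curves of minimal anticanonical degree (a \emph{minimal rational curve}), which exists because $X$ is uniruled. For a general point $x_0$, the subfamily $\mathcal{K}_{x_0}$ of members through $x_0$ is proper, and a free deformation count gives $\dim \mathcal{K}_{x_0} = (C,-K_X) - 2$. The crucial bound is $(C, -K_X) \le n+1$: if the space of members through two general points were positive-dimensional, bend-and-break would split off a curve of strictly smaller degree, contradicting minimality; the dimension of that double-pointed family is $(C,-K_X) - n - 1$, so minimality forces $(C,-K_X) \le n+1$. Combined with the hypothesis $(C,-K_X) \ge n+1$, this yields the sharp equality $(C,-K_X) = n+1$, and hence $\dim \mathcal{K}_{x_0} = n-1$.

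Next I would study the tangent map $\tau : \mathcal{K}_{x_0} \dashrightarrow \P(T_{x_0}X) = \P^{n-1}$ sending a curve smooth at $x_0$ to its tangent direction. By Kebekus's theorem the tangent map at a general point is finite; since its source has dimension $n-1 = \dim \P^{n-1}$, it is dominant, so the variety of minimal rational tangents (VMRT) at $x_0$ is all of $\P^{n-1}$. Thus through a general point there passes a minimal rational curve of anticanonical degree exactly $n+1$ in every tangent direction. The final step is to promote this infinitesimal saturation to a global statement: one analyses the universal family over $\mathcal{K}_{x_0}$, shows the evaluation morphism is birational onto $X$, and uses the degree $n+1$ together with the full VMRT to conclude, via the Cho--Miyaoka--Shepherd-Barron double-point argument (equivalently Mori's original rigidity analysis), that $X \cong \P^n$.

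I expect the main obstacle to be exactly this last passage. The deformation-theoretic dimension count and Kebekus's finiteness of the tangent map are the reliable engine, but transforming ``every tangent direction at a general point is realised by a degree-$(n+1)$ minimal curve'' into the actual biholomorphism $X \cong \P^n$ is the delicate heart of the theorem: one must rule out any nontrivial monodromy or degeneration of the family and rigidify the standard rational curves into the pencil-of-lines structure of projective space. This is where the full CMSB analysis (or Kebekus's streamlined version, which exploits the fact that the extremal degree forces the balanced splitting type $T_X|_C = \cO(2)\oplus\cO(1)^{\oplus(n-1)}$ along a general minimal curve) is genuinely needed, and it is the step I would not attempt to reprove from scratch.
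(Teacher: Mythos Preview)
Your sketch is a reasonable outline of the Cho--Miyaoka--Shepherd-Barron argument (with Kebekus's refinements), and the ingredients you name---minimal rational families, the bend-and-break bound $(C,-K_X)\le n+1$, finiteness of the tangent map, the balanced splitting $\cO(2)\oplus\cO(1)^{n-1}$---are indeed the standard ones. You also correctly identify the final rigidification step as the genuinely hard part.

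However, the paper does \emph{not} prove this theorem at all. It is stated as a quotation of Corollary~0.4 of \cite{CMSB} and invoked as a black box (``Let us first recall the essential ingredient of the proof due to Corollary~0.4 of \cite{CMSB}''), then used to derive Corollary~1 about manifolds with strongly big tangent bundle. So there is nothing to compare your proposal against: the paper's ``proof'' is simply a citation. Your write-up goes well beyond what the paper does, effectively sketching the contents of the cited reference rather than anything the author supplies.
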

\begin{mycor}
Let $X$ be a compact K\"ahler manifold of dimension $n$ with a strongly big tangent bundle.
Then $X$ is biholomorphic to $\P^n$.
\end{mycor}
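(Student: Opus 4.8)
The plan is to reduce to the projective case and then apply the length criterion of Cho--Miyaoka--Shepherd-Barron \cite{CMSB} recalled above. First, since $T_X$ is strongly big it is in particular a strongly big vector bundle on $X$, so by the lemma asserting that a compact K\"ahler manifold carrying a strongly big vector bundle is projective, $X$ is projective. Moreover, the Segre current constructed in the proof of that lemma yields a K\"ahler current in the class $c_1(\det T_X)=c_1(-K_X)$; hence $-K_X$ is big. Consequently $K_X$ cannot be pseudo-effective (otherwise $c_1(X)$ would be numerically trivial, contradicting bigness), and by \cite{BDPP} the manifold $X$ is uniruled. This disposes of the first hypothesis of the length criterion.

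It then remains to verify the length condition: for a general point $x_0\in X$ and every rational curve $C$ through $x_0$ one has $(-K_X\cdot C)\ge n+1$. The positivity input is supplied by the lemma characterising strong bigness in the projective setting: the converse direction of its proof produces an ample line bundle $A$ and integers $k,p>0$ such that $W:=\Sym^p(\Sym^k T_X\otimes A^{-1})$ is generated by its global sections over a dense Zariski open set $U\subset X$. I would take $x_0$ general, so that $x_0\in U$ (whence the evaluation $H^0(X,W)\to W_{x_0}$ is surjective) and so that every rational curve through $x_0$ is free, a standard property of general points of a smooth uniruled variety.

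The heart of the proof, and the step I expect to be the main obstacle, is the estimate on $(-K_X\cdot C)$. Fix such a curve $C$ with normalisation $f:\P^1\to X$, write $t_0:=f^{-1}(x_0)$, and use freeness to split $f^*T_X\cong\bigoplus_{i=1}^n\cO(a_i)$ with all $a_i\ge 0$; the nonzero differential $df:T_{\P^1}=\cO(2)\to f^*T_X$ forces $a_i\ge 2$ for at least one index. I claim no $a_i$ vanishes. Indeed, a trivial summand would give a surjection $f^*T_X\twoheadrightarrow\cO$; applying $\Sym^k$, twisting by $f^*A^{-1}=\cO(-e)$ with $e:=(A\cdot C)>0$, and applying $\Sym^p$ would then produce a surjection $f^*W\twoheadrightarrow\cO(-pe)$ onto a negative line bundle. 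The composite $H^0(X,W)\to W_{x_0}\to\cO(-pe)_{t_0}$ would thereby be both surjective (as a composite of surjections) and zero (since each global section of $W$ pulls back under $f$ to a section of $f^*W$ mapping into $H^0(\P^1,\cO(-pe))=0$), which is absurd. Hence every $a_i\ge 1$, and together with the index carrying $a_i\ge 2$ this gives $(-K_X\cdot C)=\sum_{i=1}^n a_i\ge 2+(n-1)=n+1$. The length criterion \cite{CMSB} then yields $X\cong\P^n$. It is essential here that strong bigness furnishes generic global generation at a general point: mere bigness of $\cO_{\P(T_X)}(1)$ together with strong psef-ness of $T_X$ would not detect the negative-quotient obstruction, which is exactly why the argument genuinely requires the stronger hypothesis.
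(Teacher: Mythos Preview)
Your argument is correct, but it is essentially Iwai's original proof rather than the paper's. The paper's whole point in this section is to give a \emph{new} proof via Segre currents: starting from a K\"ahler current $T$ with analytic singularities in $c_1(\cO_{\P(T_X)}(1))$, one chooses a rational curve $C$ not contained in the projection of $E_+(T)$, restricts $T$ to the section $\P(\cO_C(a_n))\subset\P(T_X|_C)$ corresponding to the minimal quotient, and reads off
\[
a_n=\int_{\P(\cO_C(a_n))}T>0
\]
directly from the fact that $T$ is a K\"ahler current. No freeness of $C$, no symmetric powers, no global sections of $W$ enter. Your route instead goes back through Lemma~5 (the projective characterisation of strong bigness) to produce a generically globally generated $W=\Sym^p(\Sym^k T_X\otimes A^{-1})$ and then runs a section-counting contradiction; this is exactly the mechanism in \cite{Iwe} that the paper is proposing an alternative to. Both arguments feed into the same CMSB length criterion, and both are valid, but the paper's proof is shorter and purely current-theoretic, while yours is the algebraic argument the paper is explicitly trying to replace. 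One minor remark: your contradiction in fact rules out $a_n\le 0$ (any nonpositive $a_n$ gives a quotient $f^*W\twoheadrightarrow\cO(pka_n-pe)$ of negative degree), so the appeal to freeness, while harmless, is not strictly needed.
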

\begin{proof}
Using the K\"ahler current constructed in the above lemma 6. 
If we start from a K\"ahler current $T$ with analytic singularities on $\P(T_X)$,
the singular part of $\pi_*(T^n)$ is contained in a proper analytic set of $X$.

Since $-K_X$ is a big line bundle and $X$ is projective, by \cite{BDPP}, $X$ is uniruled.
For any rational curve $C$ not contained in the singular part of $\pi_*(T^n)$, the restriction of $\pi_*(T^n)$ to $C$ is well defined.
Since $C$ is a rational curve, the restriction of $T_X$ on $C$ splits
$$T_X|_C \cong \oplus_{i=1}^n \cO_C(a_i)$$
with $a_1 \geq \cdots \geq a_n$ and $a_1 \geq 2$.
By the above theorem 3, it is enough to show that $a_n \geq 1$.

$T$ up to a multiple uniquely determines a singular metric on $\cO_{\P(T_X)}(1)$.
The restriction of the singular metric to $\P(T_X|_C)$ is well defined for the choice of $C$.
The restriction of $\cO_{\P(T_X|_C)}(1)$ to $\P(\cO_C(a_n)) \cong X$ is $\cO(a_n)$ under the identification.
Thus we have
$$a_n=\int_{\P(\cO_C(a_n))} T >0$$
since $T$ is a K\"ahler current on $\P(T_X)$.
Since $a_n$ is a natural number, $a_n  \geq 1$.
\end{proof}
Using similar arguments and following the observation in \cite{CS95}, we give the following estimate of the length of the rational curve passing a general point.
Recall that the length of a projective uniruled manifold $X$ (resp. local length of $X$ at $x \in X$) is defined to be the infimum of $(-K_X \cdot C)$ along all rational curves $C$ (resp. all rational curve containing $x$).
The local length is lower semicontinuous for $x$; thus, the length of the rational curve passing a general point is meaningful.
\begin{mycor}
Let $X$ be a projective manifold of dimension $n \geq 4$ such that $\wedge^2 T_X$ is strongly big and that $X$ is not a projective space.
Then the length of the rational curve passing a general point is equal to $n$.
\end{mycor}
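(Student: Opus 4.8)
The plan is to mirror the proof of the preceding Corollary 2 (the $\P^n$ characterization), but now using the strong bigness of $\wedge^2 T_X$ rather than of $T_X$ itself, and to combine the resulting length estimate with the rigidity theorem of Cho--Miyaoka--Shepherd-Barron. First I would produce a K\"ahler current $T$ with analytic singularities in the class $c_1(\cO_{\P(\wedge^2 T_X)}(1))$, as furnished by the strong bigness hypothesis and Demailly's regularization (Remark~2). Pushing forward the appropriate power, $\pi_*(T^{\binom{n}{2}})$ is a K\"ahler current in $c_1(\det \wedge^2 T_X)$, so in particular $-K_X$ is big and $X$ is uniruled by \cite{BDPP}. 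As in Corollary~2, the singular part of this push-forward is contained in a proper analytic subset, so for a general rational curve $C$ the restriction of the metric to $\P(\wedge^2 T_X|_C)$ is well defined.

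The key step is the splitting computation on $C$. Writing $T_X|_C \cong \oplus_{i=1}^n \cO_C(a_i)$ with $a_1 \geq \cdots \geq a_n$ and $a_1 \geq 2$ (since $C$ is rational and $X$ is uniruled), one has
$$
\wedge^2 T_X|_C \cong \bigoplus_{i<j} \cO_C(a_i + a_j).
$$
The most negative summand is $\cO_C(a_{n-1} + a_n)$. Exactly as in the proof of Corollary~2, restricting the singular metric on $\cO_{\P(\wedge^2 T_X)}(1)$ to the line $\P(\cO_C(a_{n-1}+a_n))$ and integrating $T$ over it shows
$$
a_{n-1} + a_n = \int_{\P(\cO_C(a_{n-1}+a_n))} T > 0,
$$
so $a_{n-1} + a_n \geq 1$. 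Since the $a_i$ are integers, this forces $a_{n-1} + a_n \geq 1$, and in fact since $a_{n-1} \geq a_n$ we get $a_{n-1} \geq 1$; thus at most one of the $a_i$ can be nonpositive, namely $a_n$. Hence $(-K_X \cdot C) = \sum_i a_i \geq a_1 + (n-2) \cdot 1 + a_n \geq 2 + (n-2) + a_n$, and a short case analysis on the value of $a_n$ pins down the length.

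To extract the precise value $n$, I would argue by the CMSB rigidity: if every rational curve $C$ through a general point satisfied $(-K_X \cdot C) \geq n+1$, then by Theorem~2 the manifold $X$ would be biholomorphic to $\P^n$, contrary to hypothesis. Therefore there exists a rational curve through a general point with $(-K_X \cdot C) \leq n$. Combining this upper bound with the lower bound from the splitting estimate — which, after using $a_{n-1} \geq 1$ and the uniruledness bound $a_1 \geq 2$, yields $(-K_X \cdot C) \geq n$ for a minimal such curve — forces the length to equal exactly $n$. The main obstacle I anticipate is the bookkeeping in the last step: one must verify that the curve realizing the minimal degree is the same (or comparable) to the one controlled by the current, i.e.\ that the general-point and the "avoids the singular locus" conditions are simultaneously satisfiable, which is where the lower semicontinuity of the local length and the genericity of the point (so that $C$ avoids the proper analytic singular set of $\pi_*(T^{\binom{n}{2}})$) must be invoked carefully.
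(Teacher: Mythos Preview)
Your proposal is correct and follows essentially the same route as the paper: restrict the K\"ahler current for $\wedge^2 T_X$ to the minimal-degree sub-line-bundle over a rational curve $C$ through a general point, deduce $a_{n-1}+a_n>0$, get a lower bound $(-K_X\cdot C)\geq n$, and then invoke CMSB to rule out $\geq n+1$. Two small remarks: (i) the paper's grouping $a_1 + (a_2+\cdots+a_{n-2}) + (a_{n-1}+a_n) \geq 2 + (n-3) + 1 = n$ avoids your case analysis on $a_n$ entirely, since $a_2,\dots,a_{n-2}\geq a_{n-1}\geq 1$ and $a_{n-1}+a_n\geq 1$ are already known; (ii) your anticipated ``obstacle'' dissolves once you note that any rational curve through a point $x_0$ chosen outside $\pi(\mathrm{Sing}(T))$ is automatically not contained in that projection, so the lower bound applies to \emph{every} curve through $x_0$, including the one realizing the local length.
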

\begin{proof}

We follow the proof of Corollary 1 and use its notations.
In this case,
$$\wedge^2 T_X|_C \cong \oplus_{i \neq j}^n \cO_C(a_i+a_j).$$
Using the Segre current as in the proof, we know that $a_{n-1}+a_n > 0$ for a rational curve $C$ passing the general point.

Since $a_{n-1}+a_n > 0$, $a_{n-1} >0$.
Thus we have that
$$a_1+a_2 +\cdots +a_{n-2}+(a_{n-1}+a_n) \geq 2+ (n-3)+1=n.$$
The case of length at least $n+1$ by Theorem 3 would imply that $X$ is projective space.
Thus it is enough to consider the equality case in the above inequality (i.e. $a_1=2, a_2= \cdots=a_{n-2}= a_{n+1}+a_n=1$.)

If $n \geq 4$, $1=a_{n-2} \geq a_{n-1} >0$.
Thus $a_{n-1}=1$, and the length of the rational curve passing a general point is equal to $n$.
\end{proof}
\begin{myrem}{\em 
The following statement appeared in a paper of Miyaoka in \cite[Theorem 0.1]{Miy}.
\stateparagraph
Let $X$ be a  smooth Fano variety of dimension $n \geq 3$ defined over an algebraically closed field $k$ of characteristic zero. Then the following three conditions are equivalent: 
\begin{enumerate}
\item $X$ is isomorphic to a  smooth hyperquadric $Q_n \subset \P^{n+1}$.
\item The length of $X$ is $n$.
\item The length of the rational curve passing a general point is equal to $n$, and the Picard number of $X$ is 1.
\end{enumerate}
As pointed out in \cite{DH17} (cf. Remark 5.2 in \cite{DH17}), the proof there is incomplete.
In \cite{DH17}, they prove the equivalence of (1) and (2) using different methods.

If one can show that (3) implies (1), then our argument concludes that if $X$ be a Fano manifold with Picard number 1 of dimension $n \geq 4$ such that $\wedge^2 T_X$ is strongly big and that $X$ is not a projective space,
then $X$ is biholomorphic to a quadric hypersurface.
This characterises the ``second most simple" variety in terms of ``singular" positivity on the tangent bundle.
Recall that in \cite{CS95}, it is proved that if $X$ be a manifold of dimension $n \geq 3$ such that $\wedge^2 T_X$ is ample and that $X$ is not a projective space,
Then $X$ is biholomorphic to a quadric hypersurface.

Notice that the condition of the Picard number is necessary for the conclusion.
As observed in \cite[Remark 4.2]{Miy},
there exists a Fano manifold $X$ of dimension $n$, which is not a quadric hypersurface such that the length of the rational curve passing a general point is equal to $n$.
In fact, let $H \subset \P^n$ be a hyperplane and $A \subset H \subset \P^n$ be a projective manifold $A$ of dimension $n-2$ and degree $3\leq a \leq n$.
It is enough to take $X$ as the blow-up of $\P^n$ along $A$.
}
\end{myrem}

\section{Characterisation of nef vector bundle}
In the definition of the pseudoeffective vector bundle, an additional condition is made on the approximate singular metrics on the tautological line bundle.
One may ask whether the additional condition could be made directly on some positive singular metric on the tautological line bundle.
For example, let $h$ be a positive singular metric on $\cO_{\P(E)}(1)$, could we consider the condition that the projection of singular set $\{h = \infty\}$ in $X$ is a (complete) pluripolar set?
However, this kind of condition does not behave well functorially as shown below.
In particular, the image of a (complete) pluripolar set under a proper morphism is not necessarily (complete) pluripolar as shown in the following example 4.
Notice that by Remmert's proper mapping theorem, the image of a closed analytic set under a proper morphism is always (closed) analytic.
The projection of singular set $\{h = \infty\}$ in $X$ could be very bad which forbids the Skoda-El Mir type of extension theorem.

Recall that a subset $F$ of a complex manifold $X$ is said to be pluripolar if for
each point $z \in F$ there is a plurisubharmonic function $u$, $u \neq - \infty$, defined in an open
neighborhood $U$ of $z$, such that $F \cap U \subset \{u = -\infty\}$. 
A subset $F$ of a complex manifold $X$ will be called to be a complete pluripolar set if there is a
non-constant quasi-plurisubharmonic function $u$ defined on $X$ such that $F = \{u =-\infty \}$.
By the results of \cite{Col90}, this definition is equivalent to the condition that for
each point $z \in F$ there is a plurisubharmonic function $u$, $u \neq - \infty$, defined in an open
neighbourhood $U$ of $z$, such that $F \cap U = \{u = -\infty\}$ (i.e. complete locally pluripolar) if $F$ is closed.
In fact, they prove the following lemma.
\begin{mylem}(Lemma 1, \cite{Col90})
Let $X$ be a complex space and $A \subset X$ a closed complete locally pluripolar set.
Then $A$ can be defined by $\{U_i, \varphi_i\}_{i \in \N}$ with $U_i$ open and relatively compact in $X$, $\{U_i\}_{i \in \N}$
locally finite.
There exist $U''_i \Subset U'_i \Subset U_i \Subset X$ such that
$X = \cup_i U''_i$, $\varphi_i: U_i \to [-\infty, \infty[$ plurisubharmonic functions,
$$A \cap U_i=\{\varphi_i=-\infty\},$$
$\exp(\varphi_i)$  continuous, $\varphi_i$ smooth outside $A$ and $\varphi_i-\varphi_j$ bounded on $U'_i \cap U'_j \setminus A$.
\end{mylem}
Since $\varphi_i-\varphi_j$ is bounded on $U'_i \cap U'_j \setminus A$,
we can choose $p_i \in C_c^\infty(U'_i)$ such that 
$$\varphi_i+p_i < \varphi_j+p_j$$
on $\d U'_i \cap U''_j \setminus A$ and $p_i \geq 0$.
In particular, 
$$\psi(z):=\max\{\varphi_i(z)+p_i(z); z \in U'_i\}$$
is a quasi psh function as the maximum of finite quasi psh functions locally.
Note that $A$ is the pole set of $\psi$.

 Notice that the image of a pluripolar set under a proper morphism is not necessarily pluripolar.
 For example, let $Z$ be a non-pluripolar set in $\P^1$.
 Consider $A:=Z \times \{0\} \subset \P^1 \times \P^1$.
 Since $A \subset \P^1 \times \{0\}$, $A$ is pluripolar.
 However, the image of $A$ under the projection of $\P^1 \times \P^1$ onto its first component is not pluripolar.
 The statement is still false if we change the condition ``pluripolar" to ``complete pluripolar"
 as shown in the following example recommended to the author by Demailly.
\begin{myex}
{\em 
In this example, we construct a complete pluripolar subset $A \subset \P^1 \times \P^1$ such that the image of $A$ under the projection onto the first component is $\Q \subset \C \subset \P^1$.
Notice that as countable union of pluripolar sets in $\C$, $\Q$ is pluripolar.
On the other side, $\Q$ is not complete pluripolar. Otherwises, there exists a K\"ahler form $\omega$ on $\P^1$ and a quasi-psh function $\varphi$ on $\P^1$ such that $\Q=\{\varphi=-\infty\}$ and $\omega + i \d \dbar \varphi \geq 0$ in the sense of currents.
In particular, over $\C$, $\omega|_\C=i \d \dbar \psi$ for some $\psi \in C^\infty(\C)$ and $\varphi+\psi$ is psh on $\C$ with pole set $\Q$. 
Thus, $\Q$ should be a $G_\delta$ subset in $\C$ (i.e. $\Q=\cap_{j \in \N} V_j$ for $V_j$ open in $\C$).
Notice that $\Q$ is dense in $\C$ as well as all the $V_j$'s.
Thus we have that 
$$\emptyset= \Q \cap (\C \setminus \Q)=\cap_{j \in \N} V_j \cap \cap_{q\in \Q} \C \setminus \{q\} $$
which contradicts the Baire category theorem.

Define 
$$A := \cup_{a \in \N^*, b\in \Z} \{z_{a,b}= (b/a,a)\} \subset \C^2 \subset \P^1 \times \P^1.$$
Define for any $z \in \C^2$,
$$\delta(z):= \inf_{z \neq z_{a,b}} \log|z-z_{a,b}|^2$$
which is not infinite on $\C^2 \setminus A$ implying that $A$ is discrete in $\C^2$.

Consider a sequence of $a_{a,b}>0$ such that $\sum_{a,b} a_{a,b} < \infty$
which would be chosen later.
Consider 
$$\varphi'(z):=\sum_{a,b} a_{a,b} (\log|z-z_{a,b}|^2-\log(1+|z|^2)).$$
Notice that for any $z \notin A$, $\varphi'(z) \geq \sum_{a,b} a_{a,b} (\delta(z)-\log(1+|z|^2)) \neq - \infty$
 and for any $z_{a,b}$, $\varphi'(z_{a,b})=-\infty$.
 In particular, $A \cap\C^2 =\{ \varphi'=-\infty\}$.
Consider
$$C_{a,b}=\sup_{z \in\C^2}( \log|z-z_{a,b}|^2-\log(1+|z|^2) )< \infty.$$
With suitable choice of $a_{a,b}>0$, we can assume that $\sum_{a,b} a_{a,b} C_{a,b} < \infty$.
In particular, $\varphi'$ is uniformly bounded from above on $\C^2$ which extends to be a quasi-psh function on $\P^1 \times \P^1$ with pole set $A$.
 }
\end{myex}

It was proven in  \cite{Pau98a}, \cite{Pau98b} that a line bundle over a compact complex manifold is nef if and only if its restriction to all irreducible analytic subsets is psef.
The precise meaning of a nef line bundle over an irreducible complex space will be discussed in the following.
In this section, we generalise the result of P\u{a}un to the higher rank case.

Note that there exist different notions of psef vector bundles in the higher rank case.
The weakest one is the following: a vector bundle $E$ is called psef if $\cO_{\P(E)}(1)$ is a psef line bundle.
Compared to the result of P\u{a}un, the analogous result would be false with this too weak definition, as shown in the next example.
\begin{myex}
{\em
Consider $E=A \oplus A^{-1}$ with $A$ a very ample line bundle over a curve $C$.
Then 
$$H^0(\P(E), \cO_{\P(E)}(1))=H^0(C, A) \neq 0$$
which implies that $\cO_{\P(E)}(1)$ is a psef line bundle.

However, $E$ is not a nef vector bundle since its quotient can be non-nef.
The analytic subset of $C$ is either $C$ itself or the points. 
In the two cases, the restriction of $E$ to these analytic sets is psef in the weak sense.
In particular, we can not hope that if the restriction to a vector bundle to any analytic set is psef in the weak sense, the vector bundle has to be nef.
}
\end{myex}
Let us recall some basic definitions of the psh functions in a complex space.
We recommend the article \cite{Dem85} for further information and reference.

Recall that the definitions of test functions and currents are local in nature.
To define them on a complex space $X$,
we can identify $X$ as a closed analytic subset of an open set $\Omega \subset \C^N$.
Then smooth forms on $X$ are defined as the image of the restriction map of smooth forms on $\Omega$ on the regular part of $X$.
The topology is induced by quotient topology.
The corresponding dual space is then defined as the space of currents on $X$.
\begin{mydef}(D\'efinition 1.9 \cite{Dem85})
A locally integrable function $V$ over a complex space $Z$ is called weakly psh (resp. weakly quasi psh) if $V$ is locally bounded from above and $i \d \dbar V \geq 0$ in the sense of currents (resp. $i \d \dbar V \geq \alpha$ in the sense of currents with $\alpha$ a smooth form on $Z$).

\end{mydef}
When the complex space $Z$ is smooth, the condition that $V$ is locally bounded from above follows from the condition that $V$ is locally integrable and $i \d \dbar V \geq 0$ (resp. $i \d \dbar V \geq \alpha$ in the sense of currents with $\alpha$ a smooth form on $Z$).
However, this is not always the case in the singular setting.

We have also the definition of the psh function over a complex space.
\begin{mydef}
(D\'efinition 1.9 \cite{Dem85})
Let $V: Z \to [-\infty, \infty[$ be a not identically infinite function over any open set of $Z$.
Then $V$ is called psh (resp. quasi-psh) if for any local embedding $j: Z \hookrightarrow \Omega \subset \C^N$, $V$ is the local restriction of a psh (resp. quasi-psh) function on $\Omega$.
\end{mydef}
We have the following equivalent definition of psh functions due to J.E. Fornaess and R. Narasimhan.
\begin{mythm}
(Theorem 5.10 \cite{FN})

$V$ is a psh function over a complex space $X$ if and only if

(1) $V$ is upper semi-continuous.

(2) For any holomorphic map $f: \Delta \to X$ from the unit disc $\Delta$,
either $V \circ f$ is subharmonic or $V \circ f$ is identically infinite. 
\end{mythm}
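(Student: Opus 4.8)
The plan is to prove both implications of the equivalence. The forward direction (psh in the sense of Definition 1.9 $\Rightarrow$ (1) and (2)) is essentially formal, so I would dispose of it first. Upper semicontinuity (1) is immediate, since the restriction of a usc function to a subspace is usc for the subspace topology. For (2), I would argue locally: given a holomorphic disc $f: \Delta \to X$ and a point $t_0 \in \Delta$, choose a local embedding $j: X \hookrightarrow \Omega \subset \C^N$ near $f(t_0)$ and a psh function $\tilde V$ on $\Omega$ with $\tilde V|_X = V$. On a small subdisc around $t_0$ the composition $V \circ f = \tilde V \circ (j \circ f)$ is the composition of a psh function with a holomorphic map $\Delta \to \Omega$, hence subharmonic or $\equiv -\infty$ by the classical fact. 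Since subharmonicity is local and the dichotomy (subharmonic versus $\equiv -\infty$) is settled on the connected disc, this yields (2).

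For the reverse direction, assume (1) and (2). The assertion is local, so I would fix $x_0 \in X$ together with a local embedding $X \hookrightarrow \Omega \subset \C^N$ on a small ball $\Omega$, and aim to produce a psh function $\tilde V$ on (a shrinking of) $\Omega$ with $\tilde V|_X = V$. First, the usc hypothesis (1) gives that $V$ is locally bounded above near $x_0$; this is exactly the point that, as the earlier example with $z_1^2=z_2^3$ shows, is \emph{not} automatic in the singular setting, so condition (1) is doing genuine work here. Next, applying the disc condition (2) only to discs whose image lies in the manifold $X_{\reg}$ and invoking the classical disc characterization of plurisubharmonicity on complex manifolds, $V$ is psh on $X_{\reg}$ in the usual sense.

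The heart of the argument, and where I expect the main obstacle, is the passage across $\Sing(X)$. The natural candidate is the Perron-type upper envelope: let $\tilde V$ be the upper semicontinuous regularization of the supremum of all psh functions $u$ on $\Omega$ with $u|_X \le V$; this family is nonempty and locally bounded above, so $\tilde V$ is psh on $\Omega$ and satisfies $\tilde V|_X \le V$ by construction. The crux is the reverse inequality $\tilde V|_X \ge V$ at every point, including the singular ones. Here condition (2) must be used decisively: to bound $V(x)$ from below at a possibly singular $x$, I would test the envelope against holomorphic discs $f: \Delta \to X$ through $x$ and exploit the submean value inequality for the subharmonic (or $\equiv -\infty$) functions $V \circ f$ to force the ambient envelope to dominate $V(x)$. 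The difficulty is genuinely the local geometry of the singularities: near a singular point there may be too few discs transverse to $\Sing(X)$, discs may be constrained to be tangent to or contained in the singular locus, and one must check that the family of discs in $X$ through $x$ controls $V(x)$ finely enough to recover it from the ambient side. Overcoming this requires a careful local-maximum/submean argument adapted to the stratified structure of $X$, after which $\tilde V|_X = V$ holds and $V$ is psh by definition.
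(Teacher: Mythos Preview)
The paper does not prove this statement at all: it is quoted verbatim as Theorem 5.10 of Fornaess--Narasimhan \cite{FN} and used as a black box, with no argument given. So there is no ``paper's own proof'' to compare your proposal against.

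That said, a brief comment on your sketch. The forward direction is fine. For the converse, your Perron envelope strategy is in the right spirit, but the paragraph where you write ``The crux is the reverse inequality $\tilde V|_X \ge V$'' is exactly where the real work lies, and your outline does not supply it. Testing against discs through a singular point $x$ gives you submean inequalities for $V\circ f$, but these bound $V(x)$ from \emph{above} by averages of $V$ along the disc, not from below; so it is not clear how this forces the ambient envelope $\tilde V$ up to $V(x)$. In Fornaess--Narasimhan the argument is substantially more delicate: one works with a local parametrization (branched covering) of $X$ over a polydisc, pushes $V$ down to a psh function on the base by taking a maximum over the sheets, and then lifts back, controlling what happens over the ramification locus. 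The disc condition (2) is used to ensure compatibility across irreducible components and to rule out pathologies of the type illustrated by the $z_1 z_2=0$ example in the paper. Your envelope idea, as written, does not yet engage with this branched-cover structure, and without it the inequality $\tilde V|_X \ge V$ at singular points remains a genuine gap.
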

As a direct consequence, the pullback of a quasi-psh function between complex spaces is still quasi-psh.

Of course, the definitions of psh function and weakly psh function over a complex manifold coincide.
However, the definitions of psh and weakly psh function are different in general for a complex space.
\begin{myex}
{\em
Let $Z \subset \C^2$ be a complex space defined by $z_1z_2=0$. 
Consider a function $V$ which is identically equal to 1 on $\{z_1=0\}$ and identically equal to 0 otherwise.
Then $V$ can not be the restriction of some psh function on any open neighbourhood of 0 in $\C^2$.
Otherwise, the restriction of such a function on $\{z_2=0\}$  should be identically 0, which contradicts its value at the origin.

We claim, however, $V$ is a weakly psh function on $Z$.
In fact $i \d \dbar V=0$.
It is enough to consider the open neighbourhood of the origin.
Let $g$ be a test function of $Z$ near the origin.
By applying Stokes lemma $\langle i \d \dbar V, g \rangle=\langle V, i \d \dbar g \rangle=\int_{\{z_1=0\} } i \d \dbar g =0$ for any $g$.
}
\end{myex}
Now we can define the notion of nef/ strongly psef torsion-free sheaf over an irreducible compact K\"ahler complex space.
\begin{mydef}[Nef/ Strongly psef torsion-free sheaf]\strut\\
Assume that $\cF$ is a torsion-free sheaf over an irreducible compact K\"ahler complex space $X$.
We say that $\cF$ is nef $($resp.\ strongly psef$\,)$ if there exists some modification
$\pi: \tilde{X} \to X$ such that $\pi^* \cF/\tors$ is a nef
$($resp.\ strongly psef$\,)$ vector bundle over $\tilde{X}$ a smooth manifold where $\tors$ means the torsion part.
\end{mydef}
\begin{myrem}{\rm
By the work of \cite{Ros}, \cite{GR}, \cite{Rie}, for any torsion-free sheaf $\cF$ over a compact irreducible complex space, there exists a modification $\pi: \tilde{X} \to X$ such that $\pi^* \cF/\tors$ is a locally free sheaf (i.e.\ a vector bundle) over a complex manifold $\tilde{X}$.
In the above definition, we say that $\cF$ is nef or strongly psef if $\pi^* \cF/\tors$ is nef or strongly psef.

As shown in Remark 3 of \cite{Wu20}, in the above definition, it is the same to say that 
$\cF$ is nef $($resp.\ strongly psef$\,)$ if for {\it every} modification
$\pi: \tilde{X} \to X$ such that $\pi^* \cF/\tors$ is locally free over $\tilde{X}$ a smooth manifold, $\pi^* \cF/ \tors$  is a nef
$($resp.\ strongly psef$\,)$ vector bundle.
}
\end{myrem}
By the filtration property of modifications, the pullback of a strongly psef vector bundle is always strongly psef under a modification, even for the morphism between irreducible complex spaces.
More generally, we have the following result.
\begin{myprop}
Let $f: X \to Y$ be a surjective holomorphic map between irreducible compact K\"ahler complex spaces.
Let $\cF$ be a torsion-free sheaf over $Y$.
Then $\cF$ is nef (resp. strongly psef) if and only if $f^* \cF/\tors$ is nef (resp. strongly psef).
\end{myprop}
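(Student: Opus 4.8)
The plan is to reduce the whole statement to vector bundles on smooth manifolds, and then to prove two things separately: that pullback preserves the positivity (the direct implication) and that the positivity descends along the surjection (the converse). First I would choose a modification $\mu: \tilde{Y} \to Y$ with $\tilde{Y}$ smooth such that $G := \mu^* \cF/\tors$ is locally free, and then resolve the fibre product $X \times_Y \tilde{Y}$ to obtain a smooth manifold $\tilde{X}$ with a modification $\nu: \tilde{X} \to X$ and a surjective holomorphic map $\tilde{f}: \tilde{X} \to \tilde{Y}$ with $f \circ \nu = \mu \circ \tilde{f}$; surjectivity of $\tilde{f}$ follows from that of $f$ together with the fact that $\mu$ is bimeromorphic. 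Since $G$ is locally free, $\tilde{f}^* G$ is again a vector bundle on $\tilde{X}$. Both $\tilde{f}^* G$ and $H := \nu^*(f^*\cF/\tors)/\tors$ are torsion-free quotients of $\nu^* f^*\cF = \tilde{f}^* \mu^* \cF$, and they agree over the dense Zariski open set where $\mu$ and $\nu$ are isomorphisms and $\cF$ is locally free, hence are generically isomorphic. By Proposition 5 of \cite{Wu20} the nef-ness (resp. strong psef-ness) of $\tilde{f}^* G$ and of $H$ are equivalent, and by the definition of nef/strongly psef torsion-free sheaf together with the remark that this notion is independent of the chosen modification, $\cF$ is nef (resp. strongly psef) iff $G$ is, while $f^*\cF/\tors$ is nef (resp. strongly psef) iff $H$ is. Thus the Proposition reduces to: for a surjective holomorphic map $\tilde{f}: \tilde{X} \to \tilde{Y}$ of smooth compact K\"ahler manifolds and a vector bundle $G$ on $\tilde{Y}$, $\tilde{f}^* G$ is nef (resp. strongly psef) iff $G$ is.

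The direct implication is the statement that pullback preserves positivity. For nef-ness this is classical. For strong psef-ness, writing $\hat{f}: \P(\tilde{f}^* G) \to \P(G)$ for the induced map, one has $\cO_{\P(\tilde{f}^* G)}(1) = \hat{f}^* \cO_{\P(G)}(1)$; pulling back by $\hat{f}$ the metrics $h_\varepsilon$ on $\cO_{\P(G)}(1)$ provided by the strong psef-ness of $G$ yields metrics on $\cO_{\P(\tilde{f}^* G)}(1)$ whose curvature lower bound $-\varepsilon\,\pi^*\omega$ is preserved and whose singular set is $\hat{f}^{-1}(\Sing h_\varepsilon)$. Its projection to $\tilde{X}$ is the $\tilde{f}$-preimage of the projection of $\Sing h_\varepsilon$, which stays a proper subset precisely because $\tilde{f}$ is surjective. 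Hence $\tilde{f}^* G$ is strongly psef.

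The reverse implication is the descent of positivity along $\tilde{f}$, and this is where the real work lies. For nef-ness I would invoke P\u{a}un's characterisation recalled at the start of this section: $\cO_{\P(G)}(1)$ is nef iff its restriction to every irreducible analytic subset $W \subseteq \P(G)$ is psef. Given such a $W$, I would pick an irreducible component $W'$ of $\hat{f}^{-1}(W)$ dominating $W$; then $\cO_{\P(\tilde{f}^* G)}(1)|_{W'} = (\hat{f}|_{W'})^*(\cO_{\P(G)}(1)|_W)$ is psef and $\hat{f}|_{W'}: W' \to W$ is surjective. The psef-ness descends by fibre integration: if $T \geq -\varepsilon\,\omega_{W'}$ lies in the first Chern class of the pulled-back bundle and $d$ is the relative dimension, then $(\hat{f}|_{W'})_*(T \wedge \omega_{W'}^{\,d})$ is a current in a positive multiple of $\cO_{\P(G)}(1)|_W$ with a lower bound that tends to $0$ as $\varepsilon \to 0$, so the class is psef. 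Hence $\cO_{\P(G)}(1)$ is nef, i.e. $G$ is nef.

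For strong psef-ness the same fibre integration produces, for each $\varepsilon$, a closed $(1,1)$-current on $\P(G)$ in $c_1(\cO_{\P(G)}(1))$ with curvature $\geq -\varepsilon'\,\pi^*\omega$; the genuinely delicate point, and the main obstacle, is to retain the defining requirement that the projection of the singular locus not be all of $\tilde{Y}$. Here I would exploit that strong psef-ness of $\tilde{f}^* G$ provides, for each $\varepsilon$, a point $x_0 \in \tilde{X}$ over which $h_\varepsilon$ is smooth along the whole fibre $\P(\tilde{f}^* G)_{x_0} \cong \P(G_{y_0})$, with $y_0 = \tilde{f}(x_0)$; the aim is to push the metric down so that it stays smooth over $y_0$, giving the required proper projection. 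To arrange this uniformly I would factor $\tilde{f}$ through its Stein factorisation $\tilde{f} = \beta \circ \alpha$ with $\beta$ finite and $\alpha$ of connected fibres: along the finite map $\beta$ strong psef-ness descends by averaging the metric over the fibres, the singular condition being preserved since finite images of proper sets are proper, while along the connected-fibre map $\alpha$ one combines fibre integration with the smooth fibre through $x_0$ to control the singular locus. Carrying out this last control in the K\"ahler category, where one cannot simply cut down by ample hypersurfaces to produce a multisection through $x_0$ avoiding the singular set over $y_0$, is the hardest step and the crux of the argument.
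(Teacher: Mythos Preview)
Your reduction step---building the commutative square with smooth $\tilde X,\tilde Y$ and a surjective lift $\tilde f$, then identifying $\tilde f^*G$ with $\nu^*(f^*\cF/\tors)/\tors$ up to a generic isomorphism---is exactly what the paper does (with $\mu=\pi_Y$, $\nu=\pi_X$). The paper's proof then ends in one line: it invokes Theorem~1 of \cite{Wu20}, which already states that for a surjective holomorphic map between smooth compact K\"ahler manifolds and a vector bundle $G$ on the target, $G$ is nef (resp.\ strongly psef) iff its pullback is. So the paper treats the smooth-case equivalence as a black box from the author's earlier work.

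The substantive difference is that you try to re-derive that black box. Your direct implication is fine, and your nef descent via P\u{a}un's line-bundle criterion plus fibre integration is a reasonable sketch (though the push-forward of currents along the possibly singular $W'\to W$ deserves more care than you give it). The real issue is the strongly psef descent: you yourself flag that controlling the projection of the singular locus after fibre integration along the connected-fibre map $\alpha$ is ``the hardest step and the crux of the argument'', and you do not actually carry it out. That is a genuine gap in your write-up. The paper avoids this entirely by citing \cite{Wu20}; if you want a self-contained argument you will need to consult that paper for the actual mechanism (which is not the Stein-factorisation route you sketch), or else accept the citation as the paper does.
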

\begin{proof}
There exist modifications of $X$ and $Y$ such that we have the following diagram
$$
\begin{tikzcd}
\tilde{X} \arrow[r, "\pi_X"] \arrow[d, "\tilde{f}"]
& X \arrow[d, "f"] \\
\tilde{Y} \arrow[r, "\pi_Y" ]
& Y
\end{tikzcd}
$$
where $\tilde{X}$ and $\tilde{Y}$ are smooth.
The existence of such modification is ensured by the work of Hironaka \cite{Hir64}. 
More precisely, for the desingularisation $\tilde{Y}$ of $Y$, there exists a meromorphic map from $X$ to $\tilde{Y}$ since $f$ is surjective.
Taking the desingularisation of the graph of this meromorphic map defines $\tilde{X}$.
Since $\tilde{f}$ is proper, the image of $\tilde{f}$ is a closed analytic set containing a non-empty Zariski open set.
Thus $\tilde{f}$ is also surjective.

By theorem 1 of \cite{Wu20}, $\pi_Y^* \cF /\tors$ is nef (resp. strongly psef) if and only if $\tilde{f}^*(\pi_Y^* \cF /\tors)= \pi_X^*(f^* \cF/\tors) $ is nef (resp. strongly psef).
\end{proof}

Let us give an equivalent definition when $Z$ is an irreducible analytic subset of $X$ some compact K\"ahler manifold $X$.
\begin{mylem}
Let $Z$ be an irreducible analytic subset of $(X, \omega)$ some compact K\"ahler manifold $X$ and $E$ be a vector bundle over $X$.
Denote $\Theta(\cO_{\P(E)}(1))$ the Chern curvature of some smooth metric on $\cO_{\P(E)}(1)$.
Then $E|_Z$ is strongly psef (resp. nef) if and only if
for any $\varepsilon>0$ there exists a quasi-psh function $\varphi_\varepsilon$ on $\P(E|_Z)$ such that the projection of the pole set of $\varphi_\varepsilon$ is not $Z$ (resp. $\varphi_\varepsilon \in C^\infty(\P(E|_Z))$) such that
$$\Theta(\cO_{\P(E)}(1)) + i \d \dbar \varphi_\varepsilon \geq - \varepsilon\pi^* \omega$$
in the sense of currents on $\P(E|_Z)$ with $\pi: \P(E) \to X$.
\end{mylem}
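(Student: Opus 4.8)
The plan is to pass to a resolution and transport the singular metric back and forth across the induced modification. Since $E$ is a genuine vector bundle, its restriction $E|_Z$ is already locally free, so for any resolution $\nu:\tilde{Z}\to Z$ (with $\tilde{Z}$ smooth) one has $\nu^*(E|_Z)/\tors=\nu^*(E|_Z)$, and by the very definition of strongly psef/nef torsion-free sheaf it suffices to compare the current formulation on $\P(E|_Z)$ with the honest strongly psef (resp.\ nef) property of $\nu^*(E|_Z)$ on the smooth manifold $\tilde{Z}$. The universal property of projectivization gives a fibre-product diagram and an induced modification $\tilde{\nu}:\P(\nu^*E|_Z)\to\P(E|_Z)$ with $\tilde{\nu}^*\cO_{\P(E|_Z)}(1)=\cO_{\P(\nu^*E|_Z)}(1)$; I would fix the pulled-back smooth metric on the latter so that the two reference data match, i.e.\ $\tilde{\nu}^*\Theta(\cO_{\P(E)}(1))=\Theta(\cO_{\P(\nu^*E)}(1))$ and $\tilde{\nu}^*\pi^*\omega=\tilde{\pi}^*\nu^*\omega$, where $\tilde{\pi}$ is the projection of $\P(\nu^*E|_Z)$.

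For the implication ``current formulation $\Rightarrow$ strongly psef'' I would simply pull back. Given $\varphi_\varepsilon$ on $\P(E|_Z)$, the function $\tilde{\nu}^*\varphi_\varepsilon$ is again quasi-psh on $\P(\nu^*E|_Z)$ (the pullback of a quasi-psh function between complex spaces is quasi-psh, as recorded above), and the curvature inequality pulls back verbatim. Since $\nu^*\omega$ is a smooth semipositive form one has $\nu^*\omega\le C\tilde{\omega}$ for a fixed Hermitian form $\tilde{\omega}$ on $\tilde{Z}$, so the negative part stays controlled after absorbing the constant into $\varepsilon$; a Demailly regularization on the smooth manifold $\P(\nu^*E|_Z)$ then upgrades $\tilde{\nu}^*\varphi_\varepsilon$ to a potential with analytic singularities at the cost of arbitrarily small extra negativity, while keeping the projection of the singular set proper (a generic fibre over which $\varphi_\varepsilon$ is locally bounded and over which $\tilde{\nu}$ is biholomorphic survives). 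This is exactly Definition~A for $\nu^*(E|_Z)$; the nef case is identical with ``smooth'' in place of ``analytic singularities''.

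The reverse implication is the substantive one. Starting from the defining property I write the strongly psef metric on $\cO_{\P(\nu^*E)}(1)$ as a quasi-psh potential $\psi_\varepsilon$ and descend it to $\P(E|_Z)$ by direct image: off the exceptional image $\tilde{\nu}(\cE)$ (where $\cE$ is the exceptional locus of $\tilde{\nu}$) I set $\varphi_\varepsilon=\psi_\varepsilon\circ\tilde{\nu}^{-1}$ and then take the upper semicontinuous regularization. The task is to verify that $\varphi_\varepsilon$ is quasi-psh on the possibly singular space $\P(E|_Z)$, and here I would invoke the Fornaess--Narasimhan criterion (Theorem~5.10 above): upper semicontinuity holds by construction, and for every holomorphic disc $g:\Delta\to\P(E|_Z)$ not entirely contained in $\tilde{\nu}(\cE)$ I lift $g$ over the complement of a polar set, use subharmonicity of $\psi_\varepsilon\circ\tilde{g}$, and remove the polar singularity; discs lying inside the proper analytic set $\tilde{\nu}(\cE)$ are handled by the usc regularization from nearby discs. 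Once $\varphi_\varepsilon$ is globally quasi-psh, $\Theta+i\d\dbar\varphi_\varepsilon$ is a well-defined current on all of $\P(E|_Z)$ and the inequality $\ge-\varepsilon\pi^*\omega$, valid on the dense regular iso-locus, extends across the thin set $\tilde{\nu}(\cE)$; tracking the poles, a generic point of $Z$ avoids both the image of the pole set of $\psi_\varepsilon$ and the exceptional image, so the projection of the pole set of $\varphi_\varepsilon$ is a proper subset of $Z$, as required.

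The main obstacle is precisely this descent: showing that the fibrewise direct image of the quasi-psh potential stays quasi-psh on the singular complex space $\P(E|_Z)$ and that the almost-positivity of its $\ddbar$ survives across the exceptional locus. Everything else (the comparison of $\nu^*\omega$ with $\tilde{\omega}$, the regularization, and the bookkeeping of pole projections) is routine, but the disc-by-disc verification via Fornaess--Narasimhan, together with the removable-singularity step for subharmonic functions and the treatment of discs contained in $\tilde{\nu}(\cE)$, is where the real work lies. For the nef case the same scheme applies, the only additional point being that the descended potential must be smooth in the complex-space sense; this is checked on a local embedding using that $\tilde{\nu}$ is biholomorphic over the regular locus, the behaviour over $\tilde{\nu}(\cE)$ being controlled exactly as in P\u{a}un's line-bundle characterization.
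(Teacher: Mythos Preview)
Your ``current formulation $\Rightarrow$ strongly psef/nef'' direction is essentially the paper's argument. The gap is in the descent.

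In the strongly psef direction you begin with potentials $\psi_\varepsilon$ on $\P(\nu^*E|_Z)$ satisfying
\[
\tilde{\nu}^*\Theta(\cO_{\P(E)}(1)) + i\d\dbar\psi_\varepsilon \ \ge\ -\varepsilon\,\tilde{\pi}^*\tilde{\omega},
\]
where $\tilde{\omega}$ is a genuine K\"ahler form on the \emph{resolution} $\tilde{Z}$. You then assert that after pushing forward the inequality $\ge -\varepsilon\,\pi^*\omega$ holds on the iso-locus. That is not automatic: the comparison $\nu^*\omega\le C\tilde{\omega}$ goes the wrong way, and $\nu^*\omega$ is only semipositive, so you cannot invoke the defining property with it. The paper resolves this by a specific choice $\tilde{\omega}=\sigma^*\omega - c\,\Theta(E)$ (with $E$ the exceptional divisor), and then adds $\varepsilon c\,\tilde{\pi}^*\psi_E$ to the potential, where $[E]=\Theta(E)+i\d\dbar\psi_E$; this converts the unwanted $-c\,\Theta(E)$ into the nonnegative current $c[E]$, leaving precisely $-\varepsilon\,\sigma^*\omega$ as the lower bound. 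Without this step (or an equivalent trick) your descended inequality does not have the right lower bound.

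Second, your verification that the pushed-forward function is quasi-psh via Forn\ae ss--Narasimhan is different from the paper's route and is left sketchy at the critical point. The paper instead pushes forward to obtain a \emph{weakly} quasi-psh function on the singular space $\P(E|_Z)$ and then invokes Demailly's th\'eor\`eme~1.10: it adds a small multiple of a quasi-psh function with logarithmic poles along the singular locus of $Z$, which forces the essential upper limit to agree ($=-\infty$) on every local branch and hence yields a genuine quasi-psh function, at the price of an extra $-\varepsilon\pi^*\omega$. Your handling of discs lying inside $\tilde{\nu}(\cE)$ (``usc regularization from nearby discs'') is not a proof; the standard examples show that a weakly psh function on a reducible germ need not be psh without exactly this kind of pole-correction.

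Finally, the nef case cannot be dispatched by saying the descended potential is smooth ``as in P\u{a}un''. The direct image $\psi_\varepsilon\circ\tilde{\nu}^{-1}$ is smooth only off the exceptional image, and extending it smoothly across $\tilde{\nu}(\cE)$ is the entire difficulty. The paper explicitly postpones this and completes it only inside the proof of Theorem~6 by an induction on $\dim Z$: one shows $E|_{Y_k}$ is nef on each irreducible component $Y_k$ of the exceptional image, obtains smooth potentials there by the inductive hypothesis, and glues via the collaring lemma and a regularised maximum. Your one-line reference does not supply this.
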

\begin{proof}
Let $\sigma$ be a modification $\sigma: \tilde{Z} \to Z$ such that $\tilde{Z}$ is a K\"ahler manifold.
This kind of modification can be obtained as the composition of blows-up of the smooth centres of $X$ where $\tilde{Z}$ is the strict transform of $Z$ under the modification $\tilde{\sigma}: \tilde{X} \to X$ by the work of Hironaka.

If there exists the family of functions $\varphi_\varepsilon$ prescribed in the equivalent condition, we have
$$ \Theta(\cO_{\P(\tilde{\sigma}^* E)}(1)) + i \d \dbar \tilde{\sigma}^* \varphi_\varepsilon \geq - \varepsilon \tilde{\pi}^*\tilde{\sigma}^* {\omega} $$
in the sense of currents on $\P(\tilde{\sigma}^* E|_{\tilde{Z}})$.
Since there exists $C>0$ such that $\tilde{\sigma}^* \omega \leq C \tilde{\omega}$ where $\tilde{\omega}$ is any K\"ahler form on $\tilde{X}$,
$$ \Theta(\cO_{\P(\tilde{\sigma}^* E)}(1)) + i \d \dbar \tilde{\sigma}^* \varphi_\varepsilon \geq - C \varepsilon \tilde{\pi}^*\tilde{\omega} $$
in the sense of currents on $\P(\tilde{\sigma}^* E|_{\tilde{Z}})$.
In particular, $\sigma^* E|_Z$ is a strongly psef vector bundle over $\tilde{Z}$.
The proof of the nef case is similar.

Conversely, assuming that $\sigma^* E|_{\tilde{Z}}$ is a strongly psef (resp. nef), we want to prove that $E|_Z$ admits the Finsler metrics satisfying the curvature condition.
Let us first consider the strongly psef case.

By assumption that $\sigma^* E|_{\tilde{Z}}$ is a strongly psef, for any $\varepsilon>0$ there exist a quasi-psh function $\tilde{\varphi}_\varepsilon$ on $\tilde{Z}$ such that
$$\tilde{\sigma}^* \Theta(\cO_{\P(E)}(1)) + i \d \dbar \tilde{\varphi}_\varepsilon \geq - \varepsilon \tilde{\pi}^*\tilde{\omega} $$
in the sense of currents on $\P(\tilde{\sigma}^* E|_{\tilde{Z}})$ with $\tilde{\pi}: \P(\tilde{\sigma}^* E) \to \tilde{X}$.
Here we take the following choice of K\"ahler form on $\tilde{X}$.
Let $\Theta(E)$ be the Chern curvature of the exceptional divisor of $\tilde{\sigma}$. 
(Without loss of generality, we can assume the exceptional divisor is an SNC divisor by a possible further blow-up.)
Then $\tilde{\sigma}^* \omega-c \Theta(E)$ is a K\"ahler form on $\tilde{X}$ for $c >0$ small enough.
Since $\tilde{X}$ is smooth manifold, $\d \dbar-$lemma implies that
there exists a quasi-psh function $\psi_E$ on $\tilde{X}$ such that
$[E]=\Theta(E) +i \d \dbar \psi_E$
where $[E]$ is the positive current associated to the exceptional divisor.
In particular, we have that
$$\tilde{\sigma}^* \Theta(\cO_{\P(E)}(1)) + i \d \dbar (\tilde{\varphi}_\varepsilon+\varepsilon c\tilde{\pi}^* \psi_E) \geq -\varepsilon \tilde{\sigma}^*\pi^* {\omega}+\varepsilon c \tilde{\pi}^* [E] $$
in the sense of currents.

Now we push forward the function $\tilde{\varphi}_\varepsilon+\varepsilon c \tilde{\pi}^* \psi_E$ to a weakly quasi-psh function on $\P(E|_Z)$.
Then we modify the function to get a quasi-psh function on $\P(E|_Z)$.

Push forward the above current by $\tilde{\sigma}$ gives
$$ \Theta(\cO_{\P(E)}(1)) + i \d \dbar (\tilde{\sigma}_* \tilde{\varphi}_\varepsilon+\varepsilon c \tilde{\sigma}_* \tilde{\pi}^* \psi_E) \geq -\varepsilon \pi^* {\omega} $$
in the sense of currents over $\P(E|_Z)$.
In particular, $\tilde{\sigma}_* \tilde{\varphi}_\varepsilon+\varepsilon c \tilde{\sigma}_* \psi_E$ a weakly quasi-psh function over $\P(E|_Z)$.
It is bounded from above by construction.
To modify the weakly quasi-psh function to a psh function, we recall the th\'eor\`eme 1.10 of \cite{Dem85}.

{\it 
We denote $V_Y^*(a)$ the essential upper limit of a measurable function $V$ valued in $[-\infty,\infty[$ over a germ of complex space $(Y, a)$ at $a$.
A weakly psh function $V$ over a complex space $X$  is equal to almost everywhere to a psh function if and only if  near any point $a$, for any local irreducible component $(X_j, a)$ of $(X, a)$, we have
$$V_X^*(a)=V_{X_j}^*(a).$$
}

$ \Theta(\cO_{\P(E)}(1)) $ and $ \pi^* {\omega}$ are closed form on $\P(E)$.
In particular, near any point in $\P(E|_Z)$, they are $i \d\dbar$ of some smooth functions near this point.
Thus it is enough to modify the functions  $\tilde{\sigma}_* \tilde{\varphi}_\varepsilon+\varepsilon c \tilde{\sigma}_* \tilde{\pi}^* \psi_E$
such that it satisfies the condition in the theorem with a small loss of positivity.
By the theorem of Demailly, it is enough to consider the singular part of $\P(E|_Z)$.

Cover the singular part of $Z$ by open sets $U_{\alpha} \subset \C^{N_\alpha}$ such that over $U_\alpha$ the singular part is given by the zero sets of $(g_{\alpha,j})$.
Let $\theta_\alpha \in C_c^{\infty}(U_\alpha)$. 
Then 
$$f:=\sum_\alpha \theta_\alpha \log(\sum_j |g_{\alpha,j}|^2)$$ 
is a quasi-psh function over $Z$ with poles along the singular part of $Z$.
For any $\eta >0$,
$\tilde{\sigma}_* \tilde{\varphi}_\varepsilon+\varepsilon c \tilde{\sigma}_* \psi_E+\eta \pi^* f$
has pole along the singular part of $\P(E|_Z)$.
For any point in the singular part of $\P(E|_Z)$, the essential upper limit of the restriction of 
$\tilde{\sigma}_* \tilde{\varphi}_\varepsilon+\varepsilon c \tilde{\sigma}_* \psi_E+\eta \pi^* f$
to any irreducible component of the germ of $\P(E|_Z)$ at this point is infinite.
For $\eta$ small enough, we have
$$ \Theta(\cO_{\P(E)}(1)) + i \d \dbar (\tilde{\sigma}_* \tilde{\varphi}_\varepsilon+\varepsilon  c \tilde{\sigma}_* \tilde{\pi}^* \psi_E +\eta \pi^* f) \geq -2\varepsilon \pi^* {\omega} $$
which is a weakly quasi-psh function satisfying the condition in the theorem of Demailly, hence quasi-psh on $\P(E|_Z)$. 

The proof of the nef case is postponed to the end of the proof of Theorem 6.
Philosophically, as above, we can construct a quasi-psh function, which is smooth outside $\tilde{\sigma}(E)$.
Notice that by the embedded resolution of singularities of Hironaka, the image of the exceptional divisor under $\tilde{\sigma}$
can be taken to be contained in the singular part of $Z$.
Then we do an induction on dimension to construct a smooth quasi psh function on a neighbourhood of $\tilde{\sigma}(E)$.
By taking a regularised maximum of the above two quasi psh functions, we construct a smooth function on $\P(E|_Z)$.
\end{proof}
Now we extend the characterisation of nef line bundle of P\u{a}un to the higher rank case. 
Following the strategy of P\u{a}un, we first prove the following result.
The proof is very similar to the line bundle case.
\begin{mythm}
Let $E$ be a vector bundle over a compact K\"ahler manifold $(X, \omega)$.
Then $E$ is nef if and only if $E$ is strongly psef and for any irreducible subset $Z \subset X$ in the projection of non-nef locus of $\cO_{\P(E)}(1)$, 
for any $\varepsilon>0$, there exist a quasi-psh function $\varphi_\varepsilon \in C^\infty(\P(E|_Z))$ such that
$$\Theta(\cO_{\P(E)}(1)) + i \d \dbar \varphi_\varepsilon \geq - \varepsilon\pi^* \omega$$
on $\P(E|_Z)$ with $\pi: \P(E) \to X$.
\end{mythm}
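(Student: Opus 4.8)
The plan is to reduce the whole statement to the single line bundle $L := \cO_{\P(E)}(1)$ on $Y := \P(E)$, and then to imitate P\u{a}un's argument from the line bundle case \cite{Pau98b}. Recall that, by the preceding lemma, the curvature condition imposed on $Z$ in the statement is nothing but the assertion that $E|_Z$ is \emph{nef}, i.e.\ that $L|_{\pi^{-1}(Z)}=\cO_{\P(E|_Z)}(1)$ is a nef line bundle. The forward implication is then immediate: if $E$ is nef, then $L$ is a nef line bundle, so its non nef locus is empty, the projection of that locus is empty, and the condition over $Z$ holds vacuously; moreover for every $\varepsilon>0$ the defining smooth metric on $L$ with $i\Theta\geq-\varepsilon\pi^*\omega$ has empty (hence analytic) singular set, so $E$ is in particular strongly psef. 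Only the converse requires work.

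For the converse I would fix $\varepsilon>0$ and aim to build a global smooth potential $\varphi$ on $Y$ with $\Theta(\cO_{\P(E)}(1))+i\ddbar\varphi\geq-C\varepsilon\pi^*\omega$, which is exactly nef-ness of $L$, hence of $E$. Strong psef-ness ensures that $W:=\pi(E_{nn}(L))\subsetneq X$ is a proper analytic subset; choosing for each $\varepsilon$ a metric $h_\varepsilon$ of minimal singularities in $c_1(L)+\varepsilon\pi^*\{\omega\}$, its singular set projects into $W$, so over $X\setminus W$ the potential of $h_\varepsilon$ is already smooth with the desired curvature lower bound. Thus the only obstruction to nef-ness of $L$ is concentrated over $W$. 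On the other hand, for every irreducible component $Z$ of $W$ the hypothesis together with the preceding lemma furnishes a smooth potential $\varphi_Z$ on $\pi^{-1}(Z)=\P(E|_Z)$ realising the curvature bound there.

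The heart of the proof, exactly as in P\u{a}un's line bundle argument, is then a gluing step carried out by induction on $\dim X$. First I would extend each $\varphi_Z$ from the analytic subset $\pi^{-1}(Z)$ to a quasi-psh function $\tilde\varphi_Z$ on a neighbourhood of it in $Y$, keeping the curvature loss of order $\varepsilon$ and arranging that $\tilde\varphi_Z$ stays bounded below near $\pi^{-1}(Z)$ precisely because $L|_{\pi^{-1}(Z)}$ is nef, not merely psef. Since the potential of $h_\varepsilon$ tends to $-\infty$ along $\pi^{-1}(W)$ while the $\tilde\varphi_Z$ remain controlled there, a regularised maximum of the $\tilde\varphi_Z$ against the globally defined potential of $h_\varepsilon$, cut off near $\pi^{-1}(W)$, produces a quasi-psh potential that removes the singularities over $W$; the induction on dimension is needed to treat the singular strata of $W$ and the incidences between its components. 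Passing from this weakly quasi-psh object to a genuine smooth potential on $Y$ is then routine and parallels the end of the proof of the preceding lemma (regularised maxima and the Fornaess--Narasimhan criterion \cite{FN}).

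The step I expect to be the main obstacle is precisely this extension-and-gluing across $\pi^{-1}(W)$: one must extend the fibrewise potentials off the analytic set while controlling $i\ddbar$, and --- crucially --- use nef-ness rather than mere psef-ness of $L|_{\pi^{-1}(Z)}$ to guarantee that the extended potentials dominate the logarithmically singular potential of $h_\varepsilon$ near $\pi^{-1}(W)$, so that the regularised maximum is locally equal to the smooth extension there. This is also where the optimality of the hypothesis enters: with only weak psef-ness on $Z$ the extension would itself be singular and the singularities over $W$ could not be cancelled, so the gluing would fail.
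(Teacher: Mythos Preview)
Your overall architecture is the right one and matches the paper's: produce a global potential with the correct curvature bound that is smooth away from a controlled set, produce smooth potentials near that set using the hypothesis on each component $Z$, extend those to open neighbourhoods, and glue via regularised maxima. However, two technical choices in your write-up are genuine gaps, and the paper avoids both.

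First, you base the global potential on a metric $h_\varepsilon$ of \emph{minimal singularities} in $c_1(L)+\varepsilon\pi^*\{\omega\}$ and assert that it is smooth over $X\setminus W$. A metric with minimal singularities need not have analytic singularities, and even where it is locally bounded it need not be smooth; you cannot glue a regularised maximum against it and expect a $C^\infty$ output. The paper instead starts from the strongly psef datum itself --- a quasi-psh $\psi_\varepsilon$ with \emph{analytic} singularities and $\Theta(L)+i\ddbar\psi_\varepsilon\geq-\varepsilon\pi^*\omega$ --- and then applies Demailly's regularisation theorem \cite{Dem92} to obtain $\tilde\psi_\varepsilon$ which is genuinely $C^\infty$ off the Lelong sublevel set $E_\varepsilon(\psi_\varepsilon)$, with curvature $\geq -(C+1)\varepsilon\pi^*\omega$. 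This is the correct global object to glue against.

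Second, and relatedly, you take $W=\pi(E_{nn}(L))$ and treat it as a fixed proper analytic set whose irreducible components you list once and for all. The non-nef locus is in general only a countable union of analytic sets, so this decomposition is not available. The paper again works $\varepsilon$ by $\varepsilon$: the relevant set is $\pi(E_\varepsilon(\psi_\varepsilon))$, which \emph{is} analytic by Siu and Grauert, and one applies the hypothesis to its finitely many components $Z_{\varepsilon,k}$. The smooth potentials on $\P(E|_{Z_{\varepsilon,k}})$ are extended to neighbourhoods via \cite[Prop.~3.3(1)]{DP}, combined across components by P\u{a}un's collaring lemma, and then glued to $\tilde\psi_\varepsilon$ using an auxiliary $\rho_\varepsilon$ with log poles along $\pi^{-1}(\pi(E_\varepsilon(\psi_\varepsilon)))$ exactly as you describe at the end. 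No induction on $\dim X$ is used in this theorem; the induction you are thinking of belongs to the next theorem (Theorem~6), where one must manufacture the smooth potentials on $\P(E|_Z)$ from a strongly-psef-on-every-subvariety hypothesis rather than assuming them. Likewise the Fornaess--Narasimhan criterion is used in the preceding lemma, not here.
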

\begin{proof}
The only if part is direct. We will focus on the other direction.

Let us first recall the following version of regularisation by Demailly \cite{Dem92}.

{\it
Let $T=\alpha+i \d \dbar \varphi$ be a closed positive $(1,1)-$current over a compact complex manifold $X$ where $\alpha$ is a smooth representative of the cohomology class.
Then for any $\varepsilon >0$, there exists a current $T_\varepsilon=\alpha+i \d \dbar \varphi_\varepsilon$ such that

(1) $\varphi_\varepsilon$ is smooth on $X\setminus E_{\varepsilon}(T)$ where $E_{\varepsilon}(T)$ is the closed analytic set on which the Lelong number of $T$ is larger than $\varepsilon$.

(2) In the sense of currents, we have
$$T_\varepsilon \geq -C\varepsilon \omega$$
where $C$ is a constant depending only on $X$, and $\omega$ is a fixed hermitian form on $X$.
} 

For any $\varepsilon>0$, let $\psi_\varepsilon$ be a quasi-psh function with analytic singularities over $\P(E)$ such that $\pi(\mathrm{Sing}(\psi_\varepsilon)) \neq X$ and
$$\Theta(\cO_{\P(E)}(1)) + i \d \dbar \psi_\varepsilon \geq - \varepsilon\pi^* \omega$$
in the sense of currents.
The existence of such functions is from the assumption that $E$ is strongly psef.
By the above regularisation theorem, for any $\varepsilon>0$, 
there exists $\tilde{\psi}_\varepsilon$  a quasi-psh function over $\P(E)$ such that 
it is smooth outside $E_\varepsilon(\psi_\varepsilon)=\{x \in \P(E), \nu(\psi_\varepsilon,x) \geq \varepsilon\}$ and
$$\Theta(\cO_{\P(E)}(1)) + i \d \dbar \tilde{\psi}_\varepsilon \geq - (C+1)\varepsilon\pi^* \omega$$
in the sense of currents.
Notice that $C$ is independent of $\varepsilon$.

By Grauert's direct image theorem, $\pi(E_{\varepsilon}(\psi_\varepsilon))$ is an analytic set of $X$.
Let $Z_{\varepsilon,k}$ be the global irreducible components of $\pi(E_{\varepsilon}(\psi_\varepsilon))$.
By assumption, for any $k$, 
there exist the prescribed smooth functions $\varphi_{\varepsilon,k}$ on $\P(E|_{Z_{\varepsilon,k}})$.
By Proposition 3.3 (1) in \cite{DP}, we can extend $\varphi_{\varepsilon,k}$ prescribed in the assumption as a smooth function $\tilde{\varphi}_{\varepsilon,k}$ on an open neighbourhood $V_{\varepsilon,k}$ of $\pi^{-1}(Z_{\varepsilon,k})$ 
satisfying the curvature condition
$$\Theta(\cO_{\P(E)}(1)) + i \d \dbar \tilde{\varphi}_{\varepsilon,k} \geq - 2\varepsilon\pi^* \omega.$$
For the convenience of the reader, we recall briefly the construction of the extension.
By adding $A \tilde{\omega}$ with large $A>0$ and $\tilde{\omega}$ a K\"ahler form on $\P(E)$, we are reduced to extending a smooth psh function over a closed analytic subset of $\P(E)$ to an open neighbourhood.
If the analytic set is smooth, we add large constant times the square of the
hermitian distance to the analytic set, which produces enough positivity in the normal direction.
When the analytic set is not smooth, we do an induction on the dimension of the strata (which are smooth manifolds) for the stratification of the analytic set.

We need the collaring lemma (e.g. Lemme de recollement 1.C.4 \cite{Pau98b})

{\it
Let $X$ be a compact complex manifold and $\omega$ a hermitian metric on $X$.
Let $A$ be an analytic subset of $X$ and $\cup_{k=1}^N Z_k$ be the global irreducible components of $A$.
Pour each $k$, assume that there exists a open neighbourhood $V_k$ of $Z_k$ in $X$ and a smooth real function $\varphi_k$ over $V_k$ such that
$i \d \dbar \varphi_k \geq \beta$
over $V_k$
where $\beta$ be a smooth form over $X$.

Then for any $\varepsilon >0$ there exists an open neighbourhood $\tilde{V}_\varepsilon$ of $A$ and a smooth real function $\tilde{\varphi}_\varepsilon$ over $\tilde{V}_\varepsilon$ such that
$$i \d \dbar \tilde{\varphi}_\varepsilon \geq \beta -\varepsilon \omega$$
over $\tilde{V}_\varepsilon$.
}

A similar construction can also be found in the proof of Proposition 3.3 (2) of \cite{DP}.
Applying the collaring lemma to $\tilde{\varphi}_{\varepsilon,k}$ gives a smooth function $\tilde{\varphi}_\varepsilon$ on an open neighbourhood of $\pi^{-1}(\pi(E_{\varepsilon}(\psi_\varepsilon)))$
satisfying
$$\Theta(\cO_{\P(E)}(1)) + i \d \dbar \tilde{\varphi}_\varepsilon \geq - 2\varepsilon\pi^* \omega-\varepsilon \tilde{\omega}$$
in the sense of currents.

Since $\cO_{\P(E)}(1)$ is relative $\pi-$ample, by interpolation, with abuse of notations, we have that 
$$\Theta(\cO_{\P(E)}(1)) + i \d \dbar \tilde{\varphi}_\varepsilon \geq -3 \varepsilon\pi^* \omega$$
in the sense of currents.

Let $\rho_\varepsilon$ be a quasi-psh function on $X$ such that $\rho_\varepsilon$ is smooth on $\P(E) \setminus \pi^{-1}(\pi(E_\varepsilon(\psi_\varepsilon))$ with logarithmic poles along $\pi^{-1}( \pi(E_\varepsilon(\psi_\varepsilon)))$ supported in the definition domain of $\tilde{\varphi}_\varepsilon$ and satisfies 
$$ i \d \dbar \rho_\varepsilon \geq -A \tilde{ \omega}$$
in the sense of currents for some constant $A$ depending on $\varepsilon$.
The construction of such a function can be initially found in Proposition 1.4 of \cite{Dem82}.
 We
denote by $\max_\varepsilon = \max  *\rho_\varepsilon$ a regularized max function and define for any $\delta>0$
$$\tilde{\rho}_{\varepsilon,\delta}:= \max_{\varepsilon} (\tilde{\psi}_\varepsilon+\delta   \rho_\varepsilon,\tilde{\varphi}_\varepsilon -C_\varepsilon).$$
$\tilde{\rho}_{\varepsilon,\delta}$ is smooth and coincide with $\tilde{\psi}_\varepsilon$ near the boundary of definition domain of $\tilde{\varphi}_\varepsilon$
for $C_\varepsilon>0$ large enough.
It satisfies the curvature condition
$$\Theta(\cO_{\P(E)}(1)) +i \d \dbar \tilde{\varphi}_\varepsilon \geq -\max(C+1,3) \varepsilon  \pi^*  \omega-\delta A \tilde{\omega}.$$
Choosing $\delta>0$ small enough and interpolating using the fact that $\cO_{\P(E)}(1)$ is relative $\pi-$ample finishes the construction of smooth potentials.
(Notice that the proof of collaring lemma uses exactly similar construction changing $\rho_\varepsilon$ to having a pole along the intersection of different irreducible components.)
\end{proof}
We need the following lemma to prepare for the induction on the dimension of $X$ in the following theorem 6.
\begin{mylem}
Let $E$ be a holomorphic vector bundle over an irreducible complex space $X$ of dimension 1.
Then $E$ is nef if and only if $E$ is strongly psef.
\end{mylem}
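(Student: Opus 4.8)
The plan is to prove the nontrivial implication (strongly psef $\Rightarrow$ nef); the converse is immediate, since a nef bundle admits \emph{smooth} metrics on $\cO_{\P(E)}(1)$ with curvature $\geq -\varepsilon\,\pi^*\omega$ and empty singular set, which trivially meets the conditions of strong pseudoeffectivity. So assume $E$ is strongly psef and let us aim at nefness.

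First I would reduce to the smooth case. Let $\nu:\hat X\to X$ be the normalization. Since $X$ is irreducible of dimension $1$, the normal space $\hat X$ is a smooth connected compact curve, hence projective, and $\nu$ is a (finite, bimeromorphic) modification. By the definition of nef/strongly psef torsion-free sheaf together with the Remark that the notion is independent of the chosen modification (equivalently, by the Proposition on pullback under surjective maps applied to $\nu$), $E$ is nef (resp.\ strongly psef) on $X$ if and only if $\nu^*E$ is nef (resp.\ strongly psef) on $\hat X$. Replacing $X$ by $\hat X$ and $E$ by $\nu^*E$, I may therefore assume that $X=C$ is a smooth projective curve.

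The key reduction is the classical characterization that, on a curve, $E$ is nef if and only if every quotient vector bundle $E\twoheadrightarrow Q$ has $\deg Q\ge 0$ (equivalently $\mu_{\min}(E)\ge 0$; cf.\ \cite{DPS94}). Fix such a quotient $Q$, of rank $r$. Being a quotient of the strongly psef bundle $E$, the bundle $Q$ is again strongly psef (quotients of strongly psef bundles are strongly psef, Proposition 4 of \cite{Wu20}). Now I would invoke the Segre current machinery of \cite{Wu20} exactly as in the proof above that a manifold carrying a strongly big vector bundle is projective: because $Q$ is strongly psef, the push-forward by $\pi:\P(Q)\to C$ of the associated Segre current of $\cO_{\P(Q)}(1)$ is a \emph{closed positive} current on $C$ lying in the class $\pi_*\big(c_1(\cO_{\P(Q)}(1))^{r}\big)=c_1(\det Q)$. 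Integrating over the curve gives
\[ \deg Q=\int_C c_1(\det Q)\ge 0. \]
Since $Q$ was arbitrary, $E$ is nef. (When $r=1$ this last step degenerates to the elementary fact that a strongly psef, hence psef, line bundle on a curve has nonnegative degree.)

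The delicate point, and the step I expect to be the main obstacle, is precisely the positivity of the pushed-forward Segre current; this is where strong pseudoeffectivity, as opposed to the weak notion, is indispensable. The hypothesis that the projection $\pi(\Sing(h_\varepsilon))$ is a proper subset of $C$ is exactly what makes the Segre product well defined in the limit and turns its push-forward into a genuine positive current. The weak notion fails here, as the bundle $A\oplus A^{-1}$ considered earlier shows: it is weakly psef yet its quotient $A^{-1}$ has negative degree, so no such positivity can hold. Apart from checking that the construction of \cite{Wu20} applies verbatim on the (possibly only after normalization) smooth curve and that normalization preserves both positivity notions, which are handled by the results recalled above, no genuinely new difficulty arises.
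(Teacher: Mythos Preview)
Your proof is correct but follows a genuinely different route from the paper's.

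After the common normalization step, the paper argues \emph{directly and analytically}: since $X$ is a smooth curve, the projection of the singular locus of each $\varphi_\varepsilon$ consists of finitely many points $p_{j,\varepsilon}$. One then glues: for cut-off functions $\theta_{j,\varepsilon}$ supported near $p_{j,\varepsilon}$, the regularized maximum
\[
\tilde\varphi_{\varepsilon,\delta}:=\max{}_\varepsilon\Big(\varphi_\varepsilon+\delta\,\pi^*\!\sum_j\theta_{j,\varepsilon}\log|z-p_{j,\varepsilon}|,\,-A\Big)
\]
is smooth for $A\gg0$ and satisfies $\Theta(\cO_{\P(E)}(1))+i\partial\bar\partial\,\tilde\varphi_{\varepsilon,\delta}\ge -(\varepsilon+C(\varepsilon)\delta)\,\pi^*\omega$; choosing $\delta$ small gives nefness.

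Your argument is instead \emph{algebraic}: invoke Hartshorne's criterion that $E$ is nef on a curve iff every quotient has nonnegative degree, then use that quotients of strongly psef bundles are strongly psef and that the Segre current (equivalently, Corollary~1 of \cite{Wu20}) forces $\det Q$ to be psef, hence $\deg Q\ge 0$. This is clean and makes transparent that the issue is $\mu_{\min}(E)\ge 0$. The paper's approach, by contrast, is constructive: it \emph{builds} the smooth potentials explicitly, and the gluing-via-regularized-maximum technique it showcases here is precisely the mechanism reused in the induction step of Theorem~6 (and in Lemma~7). So your route is shorter and more conceptual, while the paper's doubles as a prototype for the higher-dimensional regularization argument that follows.
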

\begin{proof}
The fact that $E$ is nef implies that $E$ is strongly psef is trivial.
Let $\sigma: \tilde{X} \to X$ be the normalisation of $X$.
Since $\tilde{X}$ is of dimension 1, $\tilde{X}$ is smooth.
By definition, it is equivalent to showing that $\sigma^* E$ is nef if $\sigma^* E$ is strongly psef.
To simplify the notation, we assume $X$ is smooth in the following.

For any $\varepsilon >0$, there exists a quasi-psh function $\varphi_\varepsilon$ with analytic singularities over $\P(E)$ such that the projection of the singular part of $\varphi_\varepsilon$ is isolated points $p_{i,\varepsilon}$ and it satisfies the curvature condition
$$\Theta(\cO_{\P(E)}(1)) +i \d \dbar \varphi_\varepsilon \geq - \varepsilon \pi^* \omega$$
in the sense of currents.
Here $\omega$ is a K\"ahler form on $X$ and $\pi: \P(E) \to X$ is the projection.
$\Theta(\cO_{\P(E)}(1))$ is the Chern curvature of some smooth metric on $\cO_{\P(E)}(1)$.

To regularise the function $\varphi_\varepsilon$, we follow the arguments of lemma 6.3 \cite{Dem92} (with the same essential idea as in the proof of the above theorem).
Fix disjoint small coordinate neighbourhoods $V_{j,\varepsilon}$ of  $p_{i,\varepsilon}$.
Let $\theta_{j,\varepsilon}$ be
a cut-off function equal to 1 near $p_{i,\varepsilon}$ with support in $V_{j,\varepsilon}$. Then $\theta_{j,\varepsilon} \log |z - p_{j,\varepsilon} |$
has a Hessian which is bounded below by $-C(\varepsilon) \omega$ for some positive constant $C(\varepsilon)$.
 We
denote by $\max_\varepsilon = \max  *\rho_\varepsilon$ a regularized max function and define for any $\delta>0$
$$\tilde{\varphi}_{\varepsilon,\delta}:= \max_{\varepsilon} (\varphi_\varepsilon+\delta \pi^* \sum_j \theta_{j,\varepsilon} \log |z - p_{j,\varepsilon} |,-A)$$
with a large $A$.
Notice that $\varphi_\varepsilon$ is smooth outside $\pi^{-1}(p_{i,\varepsilon})$.
With a large choice of $A$ (depending on $\delta,\varepsilon$), $\tilde{\varphi}_{\varepsilon,\delta}$ is smooth and coincide with $\varphi_\varepsilon$ near the boundary of $\pi^{-1}(V_{j,\varepsilon})$.
It satisfies the curvature condition
$$\Theta(\cO_{\P(E)}(1)) +i \d \dbar \tilde{\varphi}_\varepsilon \geq -(\varepsilon +C(\varepsilon) \delta) \pi^*  \omega.$$
For $\delta$ small enough, we can assume that $C(\varepsilon) \delta <1$.
Applying the construction for any $\varepsilon>0$ implies that $E$ is nef.
\end{proof}
Now we are prepared for the proof of the following characterisation of nef vector bundles.
\begin{mythm}
Let $E$ be a vector bundle over a compact K\"ahler manifold $(X, \omega)$ of dimension $n$.
$E$ is nef if and only if for any irreducible analytic subset $Z \subset X$, $E|_Z$ is a strongly psef vector bundle.
\end{mythm}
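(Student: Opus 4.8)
The plan is to prove Theorem C (the final \texttt{mythm}) by induction on $n = \dim X$, using Theorem 5 as the analytic engine and Lemma 7 as the base case. The forward direction is trivial: if $E$ is nef, then $\cO_{\P(E)}(1)$ is nef, so for every $\varepsilon > 0$ there is a smooth potential giving $\Theta(\cO_{\P(E)}(1)) + i\d\dbar\varphi_\varepsilon \geq -\varepsilon\pi^*\omega$, and this property restricts to $\P(E|_Z)$ for every irreducible $Z \subseteq X$; by Lemma 6 this exactly says $E|_Z$ is strongly psef (in fact nef). So the whole content is the converse, and the strategy is to feed the hypothesis into the criterion of Theorem 5.

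First I would set up the induction. The base case $n = 1$ is precisely Lemma 7, which asserts that over a one-dimensional irreducible complex space strongly psef already implies nef. For the inductive step, assume the theorem holds for all compact Kähler manifolds of dimension $< n$, and suppose $E$ satisfies the hypothesis on $X$ with $\dim X = n$. Applying the hypothesis to $Z = X$ itself shows $E$ is strongly psef, so by Theorem 5 it suffices to verify the remaining condition there: for every irreducible $Z \subseteq X$ contained in the projection of the non-nef locus of $\cO_{\P(E)}(1)$, and every $\varepsilon > 0$, there is a \emph{smooth} quasi-psh function $\varphi_\varepsilon \in C^\infty(\P(E|_Z))$ with $\Theta(\cO_{\P(E)}(1)) + i\d\dbar\varphi_\varepsilon \geq -\varepsilon\pi^*\omega$ on $\P(E|_Z)$. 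The key point is that any such $Z$ in the non-nef locus is a \emph{proper} subset of $X$, hence $\dim Z < n$: the projection of the non-nef locus cannot be all of $X$, since $E$ being strongly psef means there is a Finsler metric whose singular set projects to a proper subset, and the non-nef locus sits inside that projection. So the induction hypothesis becomes available.

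The heart of the argument is to convert the hypothesis ``$E|_Z$ is strongly psef'' into the smooth potential demanded by Theorem 5. Here I would invoke the induction hypothesis: since $\dim Z < n$, and since for every irreducible analytic subset $W \subseteq Z$ one has $E|_W = (E|_Z)|_W$ strongly psef (this is the hypothesis of Theorem C applied to $W \subseteq X$), the inductive version of the theorem gives that $E|_Z$ is \emph{nef}, not merely strongly psef. Being nef is exactly the statement that for every $\varepsilon > 0$ there is a smooth $\varphi_\varepsilon \in C^\infty(\P(E|_Z))$ with $\Theta(\cO_{\P(E)}(1)) + i\d\dbar\varphi_\varepsilon \geq -\varepsilon\pi^*\omega$, which is precisely the hypothesis of Theorem 5; this is guaranteed by the nef half of Lemma 6, whose proof was promised at the end of Theorem 6. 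Feeding this into Theorem 5 yields that $E$ is nef on $X$, completing the induction.

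\textbf{The main obstacle} I anticipate is not the logical skeleton but the promised proof of the nef case of Lemma 6 — the construction of a globally smooth potential on $\P(E|_Z)$ over the (possibly singular) space $Z$. Over a singular base the regularisation cannot be taken for granted: one must produce a quasi-psh function on $\P(\tilde\sigma^*E)$ over the resolution $\tilde Z$ that is smooth away from the exceptional image, then descend it and, by an induction on the dimension of the singular strata, patch in smooth local potentials near $\tilde\sigma(E)$ and take a regularised maximum to obtain genuine smoothness on all of $\P(E|_Z)$. This gluing-across-the-singularities step, together with the collaring lemma of \cite{Pau98b} used in Theorem 5 to globalise from the irreducible components $Z_{\varepsilon,k}$ of the non-nef locus, is where all the analytic care resides; the dimension induction itself is bookkeeping once these smooth potentials are in hand.
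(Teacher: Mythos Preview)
Your proposal follows the paper's approach closely and correctly isolates the main analytic difficulty, but there is one genuine logical gap in the induction step. You write ``the inductive version of the theorem gives that $E|_Z$ is nef'', but the theorem is stated for vector bundles over compact K\"ahler \emph{manifolds}, and $Z$ may well be singular; the induction hypothesis therefore does not apply to $Z$ directly. The paper resolves this by choosing an embedded resolution $\sigma:\tilde Z\to Z$ with $\tilde Z$ a smooth compact K\"ahler manifold of dimension $<n$, and then verifying the hypothesis of the theorem on $\tilde Z$: for each irreducible $\tilde Y\subset\tilde Z$ one sets $Y=\sigma(\tilde Y)\subset Z\subset X$, uses the standing hypothesis to get $E|_Y$ strongly psef, and then invokes Proposition~4 (strongly psef is preserved under surjective pullback) to conclude that $\sigma^*E|_{\tilde Y}$ is strongly psef. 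Only now does the induction hypothesis, applied on the manifold $\tilde Z$, give that $\sigma^*E$ is nef, i.e.\ $E|_Z$ is nef by definition. You seem to be aware of the resolution (it appears in your final paragraph), but you have placed it inside the proof of the nef half of the restriction lemma rather than where it belongs, namely as the step that makes the induction hypothesis applicable at all; Proposition~4 is the missing link you do not mention.

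Two minor points: your lemma numbering is off by one throughout --- the one-dimensional base case is Lemma~8, and the restriction characterisation you call ``Lemma~6'' is Lemma~7 --- and the descent of smooth potentials from $\tilde Z$ to the singular $Z$ that you sketch is indeed, as the paper indicates, completed inside the proof of Theorem~6 by a second induction on $\dim Z$, exactly along the lines you describe.
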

\begin{proof}
By restricting the potentials prescribed in the definition of nef vector bundle along with the above lemma 8, the only if part is direct.

Let us focus on the inverse direction. We will do the induction on the dimension of $X$.
The case of dimension one is given in the above lemma 9.
Assume the result is valid for any compact K\"ahler manifold of dimension strictly less than $n$.
It is enough to check the condition in the above theorem 5.
By assumption, taking $Z=X$ implies that $E$ is strongly psef. 

Let $Z$ be an irreducible subset $Z \subset X$ in the projection of non-nef locus of $\cO_{\P(E)}(1)$.
Let $\tilde{\sigma}: \tilde{X} \to X$ be a modification of $X$ such that the strict transform $\tilde{Z}$ of $Z$ is a smooth submanifold of $\tilde{X}$.
Let $\sigma: \tilde{Z} \to Z$ be the restriction of $\tilde{\sigma}$.
By assumption, $\sigma^* E$ is a strongly psef vector bundle.
We shall show by induction that $\sigma^* E$ is in fact a nef vector bundle.

Let $\tilde{Y}$ be an irreducible analytic subset of $\tilde{Z}$.
By Grauert's direct image theorem, $Y:=\sigma(\tilde{Y})$ is an analytic subset of $Z$.
By assumption $E|_Y$ is strongly psef.
Then $\sigma^* E|_{\tilde{Y}}$ is strongly psef by Proposition 4. 
By the induction assumption, $\tilde{\sigma}^* E|_{\tilde{Z}}$ is in fact nef.

Since $\tilde{\sigma}^* E|_{\tilde{Z}}$ is nef, there exists for any $\varepsilon>0$, $\varphi_\varepsilon \in C^\infty(\P(\tilde{\sigma}^* E|_{\tilde{Z}}))$ such that
$$\tilde{\sigma}^* \Theta(\cO_{\P( E)}(1)) + i \d \dbar \varphi_\varepsilon \geq - \varepsilon \tilde{\pi}^* \tilde{\omega}$$
where $\Theta(\cO_{\P( E)}(1))$ is the Chern curvature of some smooth metric of $\cO_{\P( E)}(1)$, $\tilde{\omega}$ is K\"ahler and $\tilde{\pi}:\P(\tilde{\sigma}^* E)\to \tilde{X}$ is the projection.
We can extend $\varphi_\varepsilon$ to some open neighbourhood $V_\varepsilon$ of $\P(\tilde{\sigma}^* E|_{\tilde{Z}})$ with arbitrary small loss of positivity.
Since $\cO_{\P(\tilde{\sigma}^* E)}(1)$ is relatively ample, we can assume the lower bound is $ -2 \varepsilon \tilde{\pi}^* \tilde{\omega}$.
To simplify the notation, we also denote the extended smooth function as $\varphi_\varepsilon$.

Now we construct by induction on the dimension and from $\varphi_\varepsilon$ a smooth quasi-psh function demanded in the above theorem 5
which also finishes the proof of lemma 8.
The proof is essentially contained in the lemma 1.B.5 of  \cite{Pau98b}.
For the convenience of the reader, we sketch its proof.
The proof is an induction on the dimension of $Z$.
The dimension 1 case is proven in Lemma 9.

By the same process as above in lemma 8, we construct weakly quasi-psh functions  $\tilde{\sigma}_* \tilde{\varphi}_\varepsilon+\varepsilon c \tilde{\sigma}_* \psi_{\tilde{\pi}^{-1}\tilde{E}}$ near $\P(E|_Z)$ for some $c>0$ small enough.
Here $\tilde{E}$ is the exceptional divisor of $\tilde{\sigma}$.
They are smooth over the intersection of the open neighbourhood with the preimage of $\tilde{X} \setminus \tilde{E} \cong X \setminus \tilde{\sigma}(\tilde{E})$.
Let $Y_k$ be the irreducible components of the analytic set $\tilde{\sigma}(\tilde{E})$.
Now we construct smooth quasi-psh functions near $\pi^{-1}(Y_{k})$ satisfying the same curvature condition as $\tilde{\sigma}_* \tilde{\varphi}_\varepsilon+\varepsilon c \tilde{\sigma}_* \psi_{\tilde{\pi}^{-1}\tilde{E}}$ near each $\pi^{-1}(Y_{k})$.
With a small loss of positivity, we can use these smooth quasi-psh functions near $\pi^{-1}(Y_{k})$ and the smooth quasi-psh function $\tilde{\sigma}_* \tilde{\varphi}_\varepsilon+\varepsilon c \tilde{\sigma}_* \psi_{\tilde{\pi}^{-1}\tilde{E}}$ outside the preimage of $ \tilde{\sigma}(\tilde{E})$
to collar a smooth quasi-psh function over an open neighbourhood of $\pi^{-1}(Z)$ which would finish the proof.

Consider the restriction of $\tilde{\sigma}$ to the intersection $\tilde{\sigma}^{-1}(Y_k) \cap \tilde{Z} \to Y_k$.
Notice that this map is surjective since by the embedded resolution of singularities of Hironaka, the image of the exceptional divisor under $\tilde{\sigma}$ (containing $Y_k$)
is contained in the singular part of $Z$.

Consider the global irreducible components of $\tilde{\sigma}^{-1}(Y_k) \cap \tilde{Z}$.
We denote by $\tilde{Y}_k$ the irreducible component such that 
 the restriction of $\tilde{\sigma}$
is surjective.
Consider the restriction of $\tilde{\sigma}$  between the irreducible complex spaces $\tilde{Y}_k$ to $Y_k$.
The restriction of a nef vector bundle to an irreducible analytic subset is still nef.
Thus $\tilde{\sigma}^* E|_{\tilde{Y}_k}$ is a nef vector bundle.
By Proposition 4, $E|_{Y_k}$ is also a nef vector bundle.

By the dimension induction condition, we infer the existence of smooth quasi-psh functions near $\pi^{-1}(Y_k)$ satisfying the same curvature condition as $\tilde{\sigma}_* \tilde{\varphi}_\varepsilon+\varepsilon c \tilde{\sigma}_* \psi_{\tilde{\pi}^{-1}\tilde{E}}$ of each $Y_k$.
\end{proof}

Let us also observe that the main result of \cite{Wu20} can also be generalised to a compact manifold of the Fujiki class $\cC$. 
\begin{myprop}
Let $X$ be a compact manifold of the Fujiki class $\cC$.
Let $E$ be a holomorphic vector bundle over $X$.
Then the following conditions are equivalent:
\begin{enumerate}
\item $E$ is nef and $c_1(E)=0$;
\item $E$ is strongly psef and $c_1(E)=0$;
\item $E$ admits a filtration by vector bundles whose graded pieces are hermitian flat, i.e. given by unitary representations of $\pi_1(X)$.
\end{enumerate}
\end{myprop}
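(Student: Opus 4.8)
The plan is to reduce everything to the compact K\"ahler case through a modification and then to descend the resulting structure. Since $X$ lies in the Fujiki class $\cC$, there is a modification $\pi : \tilde{X} \to X$ with $\tilde{X}$ a compact K\"ahler manifold; because $X$ is smooth, $\pi$ is a resolution of a smooth (hence rational-singularity) base, so $\pi_* \cO_{\tilde{X}} = \cO_X$ and $R^i \pi_* \cO_{\tilde{X}} = 0$ for $i>0$, and $\pi$ induces an isomorphism $\pi_* : \pi_1(\tilde{X}) \xrightarrow{\ \sim\ } \pi_1(X)$. The implication $(1) \Rightarrow (2)$ is trivial, while $(3) \Rightarrow (1)$ follows because a hermitian flat bundle is numerically flat (nef with vanishing first Chern class) on any compact complex manifold and this class is stable under extensions, so that $c_1(E) = \sum_k c_1(G_k) = 0$ and $E$ is nef. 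The substance of the statement is therefore $(2) \Rightarrow (3)$.

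For $(2) \Rightarrow (3)$, first I would pull back: by the behaviour of strong pseudo-effectivity under modifications (Theorem 1 of \cite{Wu20}) the bundle $\pi^* E$ is strongly psef on $\tilde{X}$, and $c_1(\pi^* E) = \pi^* c_1(E) = 0$. As $\tilde{X}$ is compact K\"ahler, the main theorem of \cite{Wu20} applies and yields a filtration $0 = \tilde{F}_0 \subset \tilde{F}_1 \subset \cdots \subset \tilde{F}_p = \pi^* E$ by subbundles whose graded pieces $\tilde{G}_k = \tilde{F}_k / \tilde{F}_{k-1}$ are hermitian flat, i.e.\ given by unitary representations $\rho_k : \pi_1(\tilde{X}) \to U(r_k)$. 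Composing each $\rho_k$ with the inverse of $\pi_*$ produces a hermitian flat bundle $G_k$ on $X$ with $\pi^* G_k \cong \tilde{G}_k$.

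It then remains to descend the filtration, which is the technical heart of the argument. I would argue by induction on the length $p$, reconstructing at each stage a bundle $F_k$ on $X$ with $\pi^* F_k \cong \tilde{F}_k$ and $F_k / F_{k-1} \cong G_k$. Assuming $\tilde{F}_{k-1} = \pi^* F_{k-1}$, the extension $0 \to \tilde{F}_{k-1} \to \tilde{F}_k \to \tilde{G}_k \to 0$ is classified by a class in $\mathrm{Ext}^1_{\tilde{X}}(\tilde{G}_k, \tilde{F}_{k-1}) = H^1(\tilde{X}, \pi^*(G_k^* \otimes F_{k-1}))$. The projection formula together with $R^i \pi_* \cO_{\tilde{X}} = 0$ for $i > 0$ gives a Leray isomorphism $H^1(\tilde{X}, \pi^* \cF) \cong H^1(X, \cF)$ for every locally free $\cF$, so this class is the $\pi^*$-image of a unique class in $\mathrm{Ext}^1_X(G_k, F_{k-1})$; the corresponding extension defines $F_k$ on $X$ with $\pi^* F_k \cong \tilde{F}_k$. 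Finally, applying $\pi_*$ and using $\pi_* \cO_{\tilde{X}} = \cO_X$ (hence $\pi_* \pi^* = \mathrm{id}$ on vector bundles via the projection formula) gives $F_p \cong \pi_* \pi^* E = E$, so $E$ itself carries the desired filtration, establishing $(3)$.

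The main obstacle I expect is precisely this descent: one must verify that $\pi$ preserves both the fundamental group and the relevant cohomology \emph{exactly}, so that the hermitian flat pieces and all the gluing extension data truly originate on $X$ rather than only on the K\"ahler model $\tilde{X}$. The $\pi_1$-isomorphism for a modification of a smooth compact manifold and the vanishing $R^{>0}\pi_* \cO_{\tilde{X}} = 0$ are exactly the two ingredients that make this possible. A secondary point to check carefully is that the stability of strong pseudo-effectivity under $\pi$ invoked above remains valid for the possibly non-K\"ahler base $X$; this is covered by the transformation properties of \cite{Wu20} recalled earlier, whose algebraic part holds over arbitrary compact complex manifolds.
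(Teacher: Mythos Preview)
Your argument is correct and follows the same overall strategy as the paper: pass to a K\"ahler modification $\pi:\tilde X\to X$, apply the K\"ahler case (main theorem of \cite{Wu20} together with Theorem~1.18 of \cite{DPS94}) to obtain a hermitian-flat filtration of $\pi^*E$, and then descend it to $X$.

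The difference lies in the descent step. The paper argues geometrically: it restricts to the big open set $X\setminus Z$ (with $\mathrm{codim}\,Z\ge 2$) over which $\pi$ is a biholomorphism, transports the filtration and the unitary representations via $\pi_1(\tilde X\setminus D)\simeq\pi_1(X\setminus Z)\simeq\pi_1(X)$, and then invokes reflexivity of $E$ to identify the resulting bundle on $X$ with $E$. You instead descend cohomologically: after matching the graded pieces through the $\pi_1$-isomorphism $\pi_*:\pi_1(\tilde X)\xrightarrow{\sim}\pi_1(X)$, you push down the extension classes step by step using the Leray isomorphism $H^1(\tilde X,\pi^*\cF)\cong H^1(X,\cF)$, which follows from $R^{>0}\pi_*\cO_{\tilde X}=0$ and the projection formula. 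Your route makes the descent of the \emph{extension data} (not just the graded pieces) completely explicit, at the cost of one extra cohomological input; the paper's route is shorter but leaves the phrase ``these define a vector bundle on $X$'' to the reader. Both are valid, and your version would serve well as a more detailed justification of that step.
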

\begin{proof}
(1) implies (2) and (3) implies (1) trivially.
It is enough to show that (2) implies (3).

Let $\pi: \tilde{X} \to X$ a modification of $X$ such that $\tilde{X}$ is Kähler. $\pi^* E$ is strongly psef such that $c_1(\pi^* E)=\pi^* c_1(E)=0$ by assumption.
By the main result of \cite{Wu20}, $\pi^* E$ is numerically flat.
By Theorem 1.18 of \cite{DPS94},  $\pi^* E$ admits a filtration by vector bundles whose graded pieces are hermitian flat.
Notice that the K\"ahler condition is needed in the proof of Theorem 1.18 to apply the results of \cite{UY}.

Let $D$ be the exceptional divisor such that $\pi$ induces a biholomorphism between $\tilde{X} \setminus D$ and $X \setminus Z$.
Without loss of generality, we can assume that the codimension of $Z$ in $X$ is at least 2.
The restriction of $\pi^* E$ on $\tilde{X} \setminus D$ is equivalent to  unitary representations of $\pi_1(\tilde{X} \setminus D) \simeq \pi_1(X \setminus Z) \simeq \pi_1(X)$.
These define a vector bundle on $X$, which admits a filtration by vector bundles whose graded pieces are hermitian flat.
Since $E$ is reflexive, this vector bundle is isomorphic to $E$. 
\end{proof}
\begin{myrem}
{\em 
By the same argument of the proposition, we can also replace the vector bundle $E$ with some reflexive coherent sheaf $\cF$.
The only difference is using Lemma 10 of \cite{Wu20} to conclude that the first Chern class of the pullback of $\cF$ modulo the torsion part is 0.
Here the modification is chosen such that the pullback of $\cF$ modulo the torsion part is locally free and that the bimeromorphic model is K\"ahler.
Notice that in the paper \cite{Wu20}, Section 3 is under the K\"ahler assumption.
Since it uses only the H\"ormander's $L^2$ estimate, the same works for a general compact complex manifold.
}
\end{myrem}
As a geometric application, we have the following result.
\begin{myprop}
Let $X$ be a compact connected manifold of the Fujiki class $\cC$ such that $c_1(X)=0$.
Assume that there exists $q \neq n:=\mathrm{dim}_\C X$ such that $\wedge^q T_X$ is strongly psef.
Then up to a finite \'etale cover, $X$ is a complex torus.
In particular, an irreducible symplectic, or Calabi-Yau manifold of the Fujiki class $\cC$ does
not have strongly psef $\wedge^q T_X$ or $\wedge^q T^*_X$ for any $q \neq n:=\mathrm{dim}_\C X$.
\end{myprop}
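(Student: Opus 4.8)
The plan is to deduce the vanishing of $c_2(X)$ from the numerical flatness of $\wedge^q T_X$ and then to invoke the (singular) Beauville--Bogomolov decomposition. Throughout I may assume $1 \leq q \leq n-1$, since $\wedge^0 T_X = \cO_X$ carries no information. First I would observe that $c_1(\wedge^q T_X) = \binom{n-1}{q-1}\, c_1(X) = 0$ in $H^2(X,\Q)$, which follows from the Chern root identity $c_1(\wedge^q E) = \binom{r-1}{q-1} c_1(E)$ for a rank $r$ bundle $E$, applied to $E = T_X$, together with the hypothesis $c_1(X) = 0$. Since $\wedge^q T_X$ is strongly psef by assumption, the preceding proposition (the Fujiki class $\cC$ version of the main theorem of \cite{Wu20}) shows that $\wedge^q T_X$ is numerically flat and carries a filtration by vector bundles with hermitian flat graded pieces. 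As hermitian flat bundles have vanishing Chern classes and the total Chern class is multiplicative along such a filtration, all rational Chern classes of $\wedge^q T_X$ vanish; in particular $c_2(\wedge^q T_X) = 0$ in $H^4(X,\Q)$.

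Next I would carry out the Chern root bookkeeping. Writing $a_1,\dots,a_n$ for the Chern roots of $T_X$, the roots of $\wedge^q T_X$ are the sums $a_{i_1}+\cdots+a_{i_q}$ over $q$-element subsets. Counting that each index lies in $\binom{n-1}{q-1}$ such subsets and each pair of indices in $\binom{n-2}{q-2}$ of them, and then specializing to $\sum_i a_i = c_1(X) = 0$, one obtains
$$c_2(\wedge^q T_X) = \left(\binom{n-1}{q-1} - \binom{n-2}{q-2}\right) c_2(X) = \binom{n-2}{q-1}\, c_2(X).$$
Because $1 \leq q \leq n-1$, the integer $\binom{n-2}{q-1}$ is nonzero, so the previous vanishing forces $c_2(X) = 0$ in $H^4(X,\Q)$.

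At this stage $X$ is a compact manifold of Fujiki class $\cC$ with $c_1(X) = 0$ and $c_2(X) = 0$, and I would conclude via the singular Beauville--Bogomolov decomposition in the Kähler setting (the circle of results \cite{Cam}, \cite{BGL}, \cite{CGGN20}): after a finite étale cover, $X$ decomposes, up to bimeromorphic modification, as a product of a complex torus with irreducible Calabi--Yau and irreducible symplectic factors, and $c_2(X) = 0$ excludes every Calabi--Yau and symplectic factor, since those carry a strictly positive second Chern class tested against $\omega^{n-2}$. Only the torus factor survives, whence $X$ is a complex torus up to finite étale cover. I expect this to be the main obstacle: $X$ is only assumed of class $\cC$, not Kähler, so one must make the decomposition theorem and the positivity of $c_2$ on the nontrivial factors available in this generality, e.g.\ by passing to a Kähler model while controlling the exceptional locus as in the proof of the preceding proposition and using that a manifold bimeromorphic to a complex torus is biholomorphic to one.

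Finally, for the ``in particular'' statement I would argue by contradiction. An irreducible symplectic or Calabi--Yau manifold has trivial canonical bundle, so $\wedge^q T_X^{*} \cong \wedge^{n-q} T_X \otimes K_X \cong \wedge^{n-q} T_X$; hence for $1 \leq q \leq n-1$ the strong psefness of $\wedge^q T_X$ or of $\wedge^q T_X^{*}$ is the hypothesis of the main assertion for $\wedge^q T_X$ or $\wedge^{n-q} T_X$ with exponent in $\{1,\dots,n-1\}$. The conclusion would then make $X$ a complex torus up to finite étale cover, so $H^0(X,\Omega^1_X) \neq 0$ after a finite cover; this contradicts the defining property that irreducible symplectic and Calabi--Yau manifolds (and their quasi-étale covers) have no nonzero holomorphic $1$-forms.
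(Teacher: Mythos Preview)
Your Chern–root computation is correct: from the numerical flatness of $\wedge^q T_X$ (which the preceding proposition does give) you validly obtain $c_2(\wedge^q T_X)=0$ and hence $c_2(X)=0$ via $c_2(\wedge^q T_X)=\binom{n-2}{q-1}\,c_2(X)$ when $c_1(X)=0$. The difficulty is exactly the one you flag, and it is a genuine gap rather than a technicality. The Beauville--Bogomolov decomposition (smooth or singular) and the Kobayashi--Yau ``$c_1=c_2=0$ implies torus quotient'' theorem require a K\"ahler class, and passing to a K\"ahler modification $\pi:\tilde X\to X$ destroys the hypothesis: $K_{\tilde X}=\pi^*K_X+E$ with $E$ an effective exceptional divisor, so $c_1(\tilde X)\neq 0$ in general and neither theorem applies to $\tilde X$. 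Your suggested fix (control the exceptional locus as in the previous proposition, then use that a manifold bimeromorphic to a torus is a torus) does not get off the ground, because you never produce a K\"ahler space with trivial first Chern class to which the decomposition can be applied.

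The paper avoids this obstacle by a different mechanism. From the numerical flatness of $\wedge^q T_X$ it knows in particular that $\wedge^q T_X$ is nef, and then invokes the ampleness/nefness dominance results of Laytimi--Nahm \cite{LN19} to conclude that $\wedge^{n-1}T_X$ is nef. Since $\Omega^1_X\simeq \wedge^{n-1}T_X\otimes K_X$ and $K_X$ is numerically trivial, $\Omega^1_X$ is nef; feeding this back into the preceding proposition makes $T_X$ itself numerically flat, hence nef. At this point Proposition~3.6 of \cite{DPS94} (regularisation of the big $(1,1)$-class on a class~$\cC$ manifold with nef tangent bundle) forces $X$ to be genuinely K\"ahler, and the torus conclusion follows from the known K\"ahler case. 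In short, the paper trades your Chern-class argument for \cite{LN19} precisely in order to get $T_X$ nef, which is what unlocks the passage from class~$\cC$ to K\"ahler; your route via $c_2(X)=0$ is elegant but, as stated, does not supply a substitute for that step.
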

\begin{proof}
By Proposition 5, $\wedge^q T_X$ is in fact nef.
By the result of \cite{LN19}, $\wedge^{n-1} T_X$ is also nef.
Since 
$$\Omega^1_X \simeq \wedge^{n-1} T_X \otimes K_X,$$
$\Omega^1_X$ is also nef.
Again by Proposition 5, $\Omega^1_X$ is numerically flat, which implies in particular that $T_X$ is nef.
By Proposition 3.6 of \cite{DPS94}, $X$ is K\"ahler as a consequence of regularisation.
Then it is well known that up to a finite \'etale cover, $X$ is a complex torus (cf. Corollary 9 of \cite{Wu20}).
The last statement is a consequence of the Beauville-Bogomolov decomposition theorem known in the compact K\"ahler case.
\end{proof}
Notice that the result does not hold without the Fujiki class condition.
For example, it is shown in \cite{DPS94}, that the tangent bundle of a Hopf surface is nef while its second Betti number is 0, which implies that its Chern class is 0 in the de Rham cohomology.

Notice that the definition of Schur products of modules is much more subtle. It seems to be a non-trivial question to ask whether the result of \cite{LN19} can be generalised to the category of coherent sheaves.

\section{Appendix. Surjectivity of Albanese morphism}
This appendix shows that the Albanese morphism of a compact complex manifold with nef anticanonical line bundle can be non-surjective. It can be shown by the following example due to Ueno \cite{Uen}.

In the general case, the Albanese morphism and the Albanese morphism for a compact complex manifold $X$ are defined as follows (cf. \cite{Uen74}).
For a compact complex manifold $X$, the Albanese torus is defined as
$$\Alb(X) :=H^0(X,d\cO_X)^*/H,$$
where $H^0(X,d\cO_X)\subset H^1(X,\C)$ is the space of closed holomorphic 1-forms and $H$ is the closure of the image of $H^1(X,\Z)$ in
$H^0(X,d\cO_X)^*$. Fixing a base point in $X$, integration over paths gives the Albanese map $\alpha: X \to \Alb(X)$, which is universal for pointed maps to complex tori.
\begin{myex}
{\em 
(Example 6.1 \cite{Uen})

Let $\pi: C \to \P^1$ be a double covering ramified at $2 g + 2$ points. Put
$L=\pi^*\cO(1), F=L^n$.
For any point $t \in \Pic^0(C)$,
we denote
$\cO(t)$ the corresponding line bundle of degree 0 on $C$. Put $F_t  = F \otimes \cO(t)$.
There
exists a line bundle $\cF$ on $C \times \Pic^0(C)$
such that $\cF|_{C \times \{t\}} =F_t$.
Assume $n \geq g$. Then
$F_t$ is globally generated.
Let $ p : C \times \Pic^0(C) \to \Pic^0(C)$ be the natural projection.
Since $p_* \cF$ is a vector bundle (by Grauert's direct image theorem),  there exists an open neighbourhood $U$ of the origin of $\Pic^0(C)$
and two
holomorphic sections $\varphi, \psi$ of $\cF$ over $p^{-1}(U)$ such that for each $t \in U$,
$\varphi_t:=\varphi|_{C \times \{t\}}, \psi_t:=\psi|_{C \times \{t\}},$
regarded as elements of $H^0(C \times \{t\},
F_t)$, have no common zero.
Define
$$I_1=\begin{pmatrix} 
1 & 0 \\
0 & 1 
\end{pmatrix},
I_2=\begin{pmatrix} 
0 & 1 \\
-1 & 0 
\end{pmatrix},
I_3=\begin{pmatrix} 
0 & i \\
i & 0 
\end{pmatrix},
I_4=\begin{pmatrix} 
i & 0 \\
0 & -i 
\end{pmatrix}.
$$
Then, for any point $t \in U$,
$\Lambda_t:= \sum_{2 \leq j \leq 4} \Z I_j (\varphi_t, \psi_t)^t$ is a lattice of each fibre of a vector
bundle $V_t = F_t \oplus F_t$ over $C$. The group $\Lambda_t$ acts on $V_t$ as translations in each fibre. Define
$M_t$ to be the quotient manifold and $\pi_t:M_t \to C$ to be the natural morphism.
Let $f:\tilde{C} \to C$  be a double covering ramified at the divisor $(s)$ where $s$ is a generic
element of $H^0(C,
L^{2k})$.
Define $\tilde{M}_t=M_t \times_C \tilde{C}$ and $\tilde{\pi}_t: \tilde{M}_t \to \tilde{C}$ be base change.
Then we have
$$K_{\tilde{M}_t}=\tilde{\pi}_t^* f^* (L^{g-1+k-2n} \otimes \cO(2t)).$$
Then for $k \leq 2n-g+1$, $-K_{\tilde{M}_t}$ is nef as pull back of a nef line bundle.
(Notice that
for $k = 2n-g+1$, $\kappa(\tilde{M}_t)=-\infty$ if $2t$ is non-torsion and $\kappa(\tilde{M}_t)=0$ if $2t$ is torsion.
On the other hand, since $K_{\tilde{M}_t}$ is nef in this case, the numerical dimension $\mathrm{nd}(K_{\tilde{M}_t}):= \max\{p\in \N, c_1(K_{\tilde{M}_t})^p \in H^{p,p}_{BC}(X, \R) \neq 0\}$ is 0.
In fact, $c_1(K_{\tilde{M}_t})=0 \in H^{1,1}_{BC}(X, \R)$.
It provides a counterexample of the generalised abundance conjecture, that is, the dimension of $K_{\tilde{M}_t}$ is equal to its Kodaira dimension,  cited in \cite{BDPP} in the non-K\"ahler case.)

For any $k$ and $t \in U$, we have
$$H^0( \tilde{M}_t, \Omega^1_{\tilde{M}_t}) =\tilde{\pi}_t^*  H^0( \tilde{C}, \Omega^1_{\tilde{C}}) .$$
Hence, the Albanese torus of $\tilde{M}_t$ is isomorphic to the Jacobian variety $J(\tilde{C})$ of the
curve $\tilde{C}$ and the composition $j \circ \tilde{\pi}_t : \tilde{M}_t \to \tilde{C}  \to J(\tilde{C})$ is the Albanese mapping where
$j: \tilde{C}  \to J(\tilde{C})$ is the Albanese mapping of $\tilde{C}$.
In particular, the Albanese morphism of $\tilde{C}  \to J(\tilde{C})$ is not surjective.
Thus the Albanese morphism of $\tilde{M}_t$ is also not surjective.

Notice that for $k \leq 2n-g+1$, the Kodaira dimension of $\tilde{M}_t$ is either 0 or $-\infty$ which corresponds to the case when $-K_{\tilde{M}_t}$ is nef.
By the Main Theorems of \cite{Uen}, $\tilde{M}_t$ is not of class $\cC$ in these cases.

We hope in general that the Albanese morphism of a compact manifold of class $\cC$ with an anticanonical line bundle should always be surjective.
}
\end{myex}
\begin{myrem}
{\em 
With the same notations as in the above example,
$\tilde{M}_t$ admits a fibration onto $\tilde{C}$ with 2 dimensional torus as fibers.
By homotopy exact sequence, the natural morphism
$\tilde{\pi}_{t,*}: \pi_1(\tilde{M}_t) \to \pi_1(\tilde{C})$ is surjective.
In particular, $\pi_1(\tilde{M}_t)$ is not almost abelian.
This shows that in general, for a compact complex manifold $X$ with $-K_X$ nef, the fundamental group of $X$ is not necessarily almost abelian.
Notice that by the work of P\u{a}un \cite{Pau98b} (cf. also \cite{DPS93}), this is always the case if $X$ is K\"ahler.
It is natural to conjecture that for a compact complex manifold $X$ of the Fujiki class $\cC$ with $-K_X$ nef, the fundamental group of $X$ is almost abelian.
}
\end{myrem}
  
\end{document}